\newtheorem{theorem}[equation]{Theorem}
\newtheorem{proposition}[equation]{Proposition}
\newtheorem{lemma}[equation]{Lemma}
\newtheorem{corollary}[equation]{Corollary}
\newtheorem{definition}[equation]{Definition}
\theoremstyle{definition}
\newtheorem{remark}[equation]{Remark}
\newtheorem{example}[equation]{Example}
\numberwithin{equation}{section}
\newdimen\AAdi%
\newbox\AAbo%
\def\AAk#1#2{\setbox\AAbo=\hbox{#2}\AAdi=\wd\AAbo\kern#1\AAdi{}}%
\def\eqref#1{(\ref{#1})}
\def\eqlabel#1{\def\@currentlabel{#1}}
\def\formula#1{\def\@tempa{#1}\let\@tempb\theequation\def\theequation{%
\hbox{#1}}\def\@currentlabel{(\theequation)}$$}
\def\endformula{\leqno\hbox{(\@tempa)}$$\@ignoretrue\let\theequation\@tempb}
\def\given{\hskip5\p@\relax\vrule\@width.4\p@\hskip5\p@\relax}
\newcommand{\open}[1]{%
\par\normalfont\topsep6\p@\@plus6\p@\trivlist\item[\hskip\labelsep\itshape#1%
\@addpunct{.}]\ignorespaces}
\DeclareRobustCommand{\close}[1]{%
  \ifmmode 
  \else \leavevmode\unskip\penalty9999 \hbox{}\nobreak\hfill
  \fi
  \quad\hbox{$#1$}}
\newlength{\toskip}\settowidth{\toskip}{(\theequation)}
\def \R {{\mathbb R}}
\def \L {{\mathbb L}}
\def \Var {\textrm{Var}}
\newcommand{\bF}{\overline{F}_\mu}
\newcommand{\Osc}{\mathrm{Osc}}
\begin{document}
\date{\today}

\title[Functional inequalities for heavy tails distributions]{Functional inequalities for heavy tails distributions and application to isoperimetry}

\author[P. Cattiaux]{\textbf{Patrick Cattiaux}}
\address{{\bf {Patrick} CATTIAUX}\\
Institut de Math\'ematiques de Toulouse, CNRS UMR 5219 \\
Universit\'e Paul Sabatier, Laboratoire de Statistique et Probabilit\'es\\
118 route de Narbonne, F-31062 Toulouse cedex 09, FRANCE.}
\email{patrick.cattiaux@math.univ-toulouse.fr}

\author[N. Gozlan]{\textbf{\; Nathael Gozlan}}
\address{{\bf {Nathael} GOZLAN}\\
Laboratoire d'Analyse et Math\'ematiques Appliqu\'ees- UMR 8050,
Universit\'es de Paris-est, Marne la
Vall\'ee et de Paris 12-Val-de-Marne\\
Boulevard Descartes, Cit\'e Descartes, Champs sur Marne\\
77454 Marne la Vall\'ee Cedex 2, FRANCE.}
\email{nathael.gozlan@univ-mlv.fr}

\author[A. Guillin]{\textbf{\; Arnaud Guillin}}
\address{{\bf {Arnaud} GUILLIN}\\
Ecole Centrale Marseille et LATP \, Universit\'e  de Provence,
Technopole Ch\^ateau-Gombert, 39, rue F. Joliot Curie, 13453 Marseille Cedex 13, FRANCE.}
\email{aguillin@ec-marseille.fr, guillin@cmi.univ-mrs.fr}

\author[C. Roberto]{\textbf{\; Cyril Roberto}}
\address{{\bf {Cyril} ROBERTO}\\
Laboratoire d'Analyse et Math\'ematiques Appliqu\'ees- UMR 8050,
Universit\'es de Paris-est, Marne la
Vall\'ee et de Paris 12-Val-de-Marne\\
Boulevard Descartes, Cit\'e Descartes, Champs sur Marne\\
77454 Marne la Vall\'ee Cedex 2, FRANCE.}
\email{cyril.roberto@univ-mlv.fr}

%
%
%

\begin{abstract}
This paper is devoted to the study of probability measures with heavy tails.
Using the Lyapunov function approach we prove that such measures satisfy different kind of functional inequalities
such as weak Poincar\'e and weak Cheeger, weighted Poincar\'e and weighted Cheeger inequalities and their dual forms.
Proofs are short and we cover very large situations.
For product measures on $\R^n$ we obtain the optimal dimension dependence using the mass transportation method.
Then we derive (optimal) isoperimetric inequalities.
Finally we deal with spherically symmetric measures.
We recover and improve many previous results.
\end{abstract}

\maketitle

%
%

\textit{ Key words : weighted Poincar\'e inequalities, weighted Cheeger inequalities, Lyapunov
function, weak inequalities, isoperimetric profile}

\bigskip

\textit{ MSC 2000 : .}
\bigskip

\section{Introduction, definitions and first results.}\label{intro}

The subject of functional inequalities knows an amazing growth due to the numerous fields of
application: differential geometry, analysis of p.d.e., concentration of measure phenomenon,
isoperimetry, trends to equilibrium in deterministic and stochastic evolutions... Let us mention Poincar\'e,
weak Poincar\'e or super Poincar\'e inequalities, Sobolev like inequalities, $F$-Sobolev
inequalities (in particular the logarithmic Sobolev inequality), modified log-Sobolev inequalities
and so on. Each type of inequality appears to be very well adapted to the study of one (or more) of the applications listed above.
We refer to \cite{grossbook},
\cite{bakry}, \cite{Dav}, \cite{logsob}, \cite{ledoux}, \cite{Wbook}, \cite{GZ99}, \cite{Ro99},
\cite{BCR1}, \cite{CGWW} for an introduction.

If a lot of results are known for log-concave probability measures, not so much has been proved
for measures with heavy tails (let us mention \cite{r-w,BCR2, BLweight,BCG,bob07,bobkov-zeg2}). In
this paper the focus is on such measures with heavy tails and our aim is to prove functional and
isoperimetric inequalities.

Informally measures with heavy tails are measures with tails larger than exponential. Particularly
interesting classes of examples are either $\kappa$-concave probability measures, or
sub-exponential like laws (or tensor products of any of them) defined as follows.
\smallskip

We say that a probability measure $\mu$ is $\kappa$-concave with $\kappa= - 1/\alpha$ if
\begin{equation}\label{eqconvex}
d\mu(x) = V(x)^{-(n+\alpha)}dx
\end{equation}
with $V : \mathbb{R}^n \to (0,\infty)$ convex and $\alpha > 0$. Such measures have been introduced by Borell \cite{borell75} in more general setting.
See \cite{bob07} for a comprehensive introduction and the more general definition of $\kappa$-concave probability measures.
Prototypes of $\kappa$-concave probability measures are
the generalized Cauchy distributions
\begin{equation}\label{eqcauchy}
d\mu(x) = \frac{1}{Z} \, \left((1+|x|^2)^{1/2}\right)^{-(n+\alpha)}
\end{equation}
for $\alpha > 0$, which corresponds to the previous description since $x \mapsto (1+|x|^2)^{1/2}$
is convex. In some situations we shall also consider
$d\mu(x)=(1/Z)\left((1+|x|)\right)^{-(n+\alpha)}$. Note that these measures are Barenblatt
solutions in porous medium equations and appears naturally in weighted porous medium equations,
giving the decay rate of this nonlinear semigroup towards the equilibrium measure, see
\cite{vasquez,DGGW}.

\smallskip

We may replace the power by an exponential yielding the notion of sub-exponential law, i.e. given
any convex function $V : \mathbb{R}^n \to (0,\infty)$ and $p
> 0$, we shall say that
\begin{equation*}
d\mu(x) = e^{-V(x)^{p}}dx
\end{equation*}
is a sub-exponential like law. A typical example is $V(x)=|x|$.
\smallskip

Heavy tails measures are now particularly important since they appear in various areas: fluid
mechanics, mathematical physics, statistical mechanics, mathematical finance ... Since previous
results in the literature are not optimal, our main goal is to study the isoperimetric problem for
heavy tails measures. This will lead us to consider various functional inequalities (weak Cheeger,
weighted Cheeger, converse weighted Cheeger). Let us explain why.
\medskip

Recall the isoperimetric problem.

Denote by $d$ the Euclidean distance on $\mathbb{R}^n$. For $h\ge 0$ the closed $h$-enlargement of a set
$A\subset \mathbb{R}^n$ is $A_h:=\big\{x\in M;\; d(x,A)\le h\big\}$
where $d(x,A):=\inf\{ d(x,a);\; a \in A\}$ is $+\infty $ by convention for $A=\emptyset$.
We may define the boundary measure, in the sense of $\mu$, of a Borel set $A \subset \mathbb{R}^n$ by
$$
\mu_s(\partial A):= \liminf_{h \to 0^+} \frac{\mu(A_h\setminus A)}{h}\cdot
$$
An isoperimetric inequality is of the form
\begin{equation} \label{eq:iniso}
\mu_s(\partial A) \geq F( \mu(A)) \qquad \forall A \subset \mathbb{R}^n
\end{equation}
for some function $F$. Their study is an important topic
in geometry, see e.g. \cite{ros,barthe05}. The first question of interest is to find the optimal $F$. Then one can try to
find the optimal sets for which \eqref{eq:iniso} is an equality. In general this is very difficult and the only hope is
to estimate the isoperimetric profile defined by
$$
I_\mu(a):=\inf\big\{ \mu_s(\partial A);\; \mu(A)=a\big\}, \quad a\in[0,1].
$$
Note that the isoperimetric inequality \eqref{eq:iniso} is closely related to concentration of
measure phenomenon, see \cite{bobkh97cbis,ledoux01}. For a large class of distributions $\mu$ on
the line with exponential or faster decay, it is possible to prove
\cite{borell,sudakov,bobk96ffii,BH97,BL96,BCR1,BCR3,Emil} that the isoperimetric profile
$I_{\mu^n}$ of the $n$-tensor product $\mu^n$ is (up a to universal, hence dimension free
constants) equal to $I_\mu$.

For measures with heavy tails, this is no more true. Indeed,
 if $\mu$ is a probability measure on $\mathbb{R}$ such that
there exist $h>0$ and $\varepsilon>0$ such that for all $n\ge 1$ and
all $A\subset \mathbb{R}^{n}$ with $\mu^{n}(A)\ge \frac12$, one has
\begin{equation} \label{eq:talagrand}
\mu^{n}(A+[-h,h]^{n})\ge\frac12+\varepsilon,
\end{equation}
then $\mu$ has exponential tails, that is there exist positive constants $C_1,C_2$
such that $\mu([x,+\infty))\le C_1 e^{-C_2 x}$,
$x\in\mathbb{R}$, see \cite{Tal91}.

Therefore, for measures with heavy tails, the isoperimetric profile as well as the concentration
of measure for product measure should heavily depend on $n$. Some bounds on $I_{\mu^n}$, not
optimal in $n$, are obtained in \cite{BCR2} using weak Poincar\'e inequality. The non optimality
is mainly due to the fact that $\mathbb{L}_2$ inequalities (namely weak Poincar\'e inequalities)
are used. We shall obtain optimal bounds, thus completing the pictures for the isoperimetric
profile of tensor product of very general form of probability measures, using $\mathbb{L}_1$
inequalities called weak Cheeger inequalities we introduce now.

As noted by Bobkov \cite{bob07}, for measures with heavy tails, isoperimetric inequalities are
equivalent to weak Cheeger inequalities. A probability measure is said to satisfy a weak Cheeger
inequality if there exists some non-increasing function $\beta : (0,\infty) \to [0,\infty)$ such
that for every smooth $f : \mathbb{R}^n \to \mathbb{R}$, it holds
\begin{equation} \label{in:wc}
\int \, |f-m| \, d\mu \, \leq \, \beta(s)  \int \, |\nabla f| \, d\mu \, + \, s \, \Osc_\mu(f)\, \qquad \forall s>0 ,
\end{equation}
where $m$ is a median of $f$ for $\mu$ and $\Osc_\mu(f)={\rm ess \, sup}(f) - {\rm ess \,
inf}(f)$. The relationship between $\beta$ in \eqref{in:wc} and $F$ in \eqref{eq:iniso} is
explained in Lemma \ref{lembob} below. Since $\int \, |f-m| \, d\mu \leq \frac 12\Osc_\mu(f)$,
only the values $s \in (0,1/2]$ are relevant.

Recall that similar weak Poincar\'e inequalities were introduced in \cite{r-w}, replacing the
median by the mean and introducing squares.

Of course if $\beta(0)<+\infty$ we recover the usual Cheeger or Poincar\'e inequalities.
\smallskip

In order to get isoperimetric results, we thus investigate such inequalities. We use two main
strategies. One is based on the Lyapunov function approach \cite{BCG,CGWW,BBCG}, the other is
based on mass transportation method \cite{gozjfa,gozlan} (see also
\cite{bobkov94,Tal94,BH97,BH00}). In the first case proofs are very short. The price to pay is a
rather poor control on the constants, in particular in terms of the dimension. But we cover very
general situations (not at all limited to $\kappa$-concave like measures). The second strategy
gives very explicit controls on the constants, but results are limited to tensor products of
measures on the line or spherically symmetric measures (but only for the $\mathbb L_2$ case).

This is not surprising in view of the analogue results known for log-concave measures for
instance. Indeed recall that the famous conjecture of Kannan-Lovasz-Simonovits (\cite{KLS})
telling that the Poincar\'e constant of log-concave probability measures only depends on their
variance is still a conjecture. In this situation universal equivalence between Cheeger's
inequality and Poincar\'e inequality is known (\cite{ledgap,Emil}), and some particular cases (for
instance spherically symmetric measures) have been studied (\cite{bobsphere}). In our situation
the equivalence between weak Poincar\'e and weak Cheeger inequalities does not seem to be true in
general, so our results are in a sense the natural extension of the state of the art to the heavy
tails situation.
\medskip

The Lyapunov function approach appears to be a very powerful tool not only when dealing with the $\mathbb{L}_1$  form
\eqref{in:wc} but also with $\mathbb{L}_2$ inequalities.

This approach is well known for dynamical systems for example. It has been introduced by
Khasminski and developed by Meyn and Tweedie (\cite{MT,MT2,MT3}) in the context of Monte Carlo
algorithm (Markov chains). This dynamical approach is in some sense natural: consider the process
whose generator is symmetric with respect to the studied measure (see next section for more
precise definitions), Lyapunov conditions express that there is some drift (whose strength varies
depending on the measure studied) which pushes the process to some natural, say compact, region of
the state space. Once in the compact the process behaves nicely and pushed forward to it as soon
as it escapes. It is then natural that it gives nice qualitative (but not so quantitative) proofs
of total variation convergence of the associated semigroup towards its invariant measure and find
applications in the study of the decay to equilibrium of dynamical systems, see e.g.
\cite{DFG,HairerM,V97,BCG,CatGui3}. It is also widely studied in statistics, see e.g. \cite{MT}
and the references therein. In \cite{BCG}, connections are given between Lyapunov functions and
functional inequalities of weak Poincar\'e type, improving some existing criteria discussed in
\cite{r-w,BCR2}. In this paper we give new types of Lyapunov functions (in the spirit of
\cite{BBCG}) leading to quantitative improvements and in some sense optimal results. Actually we
obtain four types of functional inequalities: weighted Cheeger (and weighted Poincar\'e
inequalities)
\begin{equation} \label{in:wc1}
\int \, |f-m| \, d\mu \, \leq \, C  \, \int \, |\nabla f| \, \omega \, d\mu \,
\end{equation}
and their dual forms called converse Cheeger (and converse Poincar\'e inequalities)
\begin{equation} \label{in:wc2}
\inf_c \, \int \, |f-c| \, \omega \, d\mu \, \leq \, C \,  \int \, |\nabla f| \, d\mu
\end{equation}
where $\omega$ are suitable ``weights'' (see Section \ref{sec2} for precise and more general
definitions definitions).

Weighted Cheeger and weighted Poincar\'e inequalities were very recently studied by Bobkov and
Ledoux \cite{BLweight}, using functional inequalities of Brascamp-Lieb type. Their results apply
to $\kappa$-concave probability measures. We recover their results with slightly worst constants
but our approach also applies to much general type laws (sub-exponential for example).

Note that converse Poincar\'e inequalities appear in the spectral theory of Schr\"{o}dinger
operators, see \cite{DMC}. We will not pursue this direction here.

Our approach might be summarized by the following diagram:
$$
\begin{array}{ccccccc}
 &  & &          &          &          &\textrm{Transport} \\
 &  & &          &          &          &\Downarrow \\
 &  & \textrm{Weighted Cheeger} &          &  \Rightarrow        &          & \textrm{Weighted Poincar\'e} \\
 &  &                  & \Nwarrow &          & \Nearrow & \\
 &  &                  &          & \textrm{Lyapunov} &          &\Downarrow  \\
 &  &                  & \Swarrow &          & \Searrow & \\
 &  & \textrm{Converse Cheeger} &          &          &          & \textrm{Converse Poincar\'e} \\
 &  &  \Downarrow       &          &          &         & \Downarrow \\
\textrm{Transport} & \Rightarrow & \textrm{Weak Cheeger} & & \Rightarrow  & & \textrm{Weak Poincar\'e} \\
 &  & \Updownarrow     &          &          &          & \Downarrow \\
 &  & \textrm{Isoperimetry}     &          & \Rightarrow & & \textrm{Concentration}  \\
\end{array}
$$

\smallskip

Some points have to be underlined. As the diagram indicates, converse inequalities are suitable
for obtaining isoperimetric (or concentration like) results, while (direct) weighted inequalities,
though more natural, are not. Indeed, the tensorization property of the variance immediately shows
that if $\mu$ satisfies a weighted Poincar\'e inequality with constant $C$ and weight $\omega$,
then the tensor product $\mu^n$ satisfies the same inequality. Since we know that the
concentration property for heavy tails measures is not dimension free, this implies that contrary
to the ordinary or the weak Poincar\'e inequality, the weighted Poincar\'e inequality cannot
capture the concentration property of $\mu$. The other point is that the mass transportation
method can also be used to obtain some weighted Poincar\'e inequalities, and weighted Poincar\'e inequalities via a change of function lead to converse Poincar\'e inequality (see \cite{BLweight}). The final point is that on most examples
we obtain sharp weights (but non necessarily sharp constants), showing that (up to constants) our
results are optimal.

\medskip
The paper is organized as follows.

In Section \ref{sec2} we prove that the existence of a Lyapunov function implies  weighted Cheeger
and weighted Poincar\'e inequalities and their converse.

Section \ref{secweak} is devoted to the study of weak Cheeger inequalities and to their
application to the isoperimetric problem. The Lyapunov function approach and the transport
technique are used. Explicit examples are given.

Then, weighted Poincar\'e inequalities are proved in Section \ref{secspherique} for some
spherically  symmetric probability measures with heavy tails. We use there the transport
technique.

We show in Section \ref{sec:linkswp} how to obtain weak Poincar\'e inequalities from weak Cheeger and converse Poincar\'e inequalities.

Finally, the appendix is devoted to the proof of some technical results used in Section \ref{secweak}.


\section{From $\phi$-Lyapunov function to weighted inequalities and their converse}\label{sec2}
The purpose of this section is to derive weighted inequalities of Poincar\'e and Cheeger types,
and their converse forms,  from the existence of a $\phi$ Lyapunov function for the underlying
diffusion operator. To properly define this notion let us describe the general framework we shall
deal with.

Let $E$ be some Polish state space equipped with a probability measure $\mu$ and a $\mu$-symmetric
operator $L$.  The main assumption on $L$ is that there exists some algebra $\mathcal{A}$ of
bounded functions, containing constant functions, which is everywhere dense (in the $\L_2(\mu)$
norm) in the domain of $L$. This ensures the existence of a ``carr\'e du champ'' $\Gamma$, {\it
i.e.} for $f, g \in \mathcal{A}$, $L(fg)=f Lg + g Lf + 2 \Gamma(f,g)$. We also assume that
$\Gamma$ is a derivation (in each component), {\it i.e.} $\Gamma(fg,h)=f\Gamma(g,h) + g
\Gamma(f,h)$. This is the standard ``diffusion'' case in \cite{bakry} and we refer to the
introduction of \cite{cat4} for more details. For simplicity we set $\Gamma(f)=\Gamma(f,f)$. Note
that, since $\Gamma$ is a non-negative bilinear form (see \cite[Proposition 2.5.2]{logsob}), the
Cauchy-Schwarz inequality holds: $\Gamma(f,g) \leq \sqrt{\Gamma(f)} \sqrt{\Gamma(g)}$.
Furthermore,  by symmetry,
\begin{equation} \label{eq:gamma}
\int \Gamma(f,g) d\mu = - \int f \, L g \, d\mu \, .
\end{equation}
Also, since $L$ is a diffusion, the following chain rule formula
$\Gamma(\Psi(f), \Phi(g))=\Psi'(f) \Phi'(g) \Gamma(f,g)$ holds.

In particular if $E=\R^n$, $\mu(dx)= p(x) dx$ and $L=\Delta + \nabla \log p.\nabla$, we may
consider the $C^\infty$ functions with compact support (plus the constant functions) as the
interesting subalgebra $\mathcal{A}$, and then $\Gamma(f,g)=\nabla f \cdot \nabla g$.

Now we define the notion of $\Phi$-Lyapunov function.

\begin{definition}\label{deflyapphi}
Let $W\geq 1$ be a smooth enough function on $E$ and $\phi$ be a $\mathcal{C}^1$ positive increasing function
defined on $\R^+$. We say that $W$ is a $\phi$-Lyapunov function if there exist some set $K \subset E$ and
some $b \geq 0$ such that
$$
LW \, \le \,  -\phi(W) \, + \, b \, \mathds{1}_K \, .
$$
This latter condition is sometimes called a
``drift condition''.
\end{definition}
Note that, for simplicity of the previous definition, we did not (and we shall not) specify the underlying operator $L$.

\begin{remark}
One may ask about the meaning of $LW$ in this definition. In the $\R^n$ case, we shall choose
$C^2$ functions $W$, so that $LW$ is defined in the usual sense. On more general state spaces of
course, the easiest way is to assume that $W$ belongs to the ($\L_2$) domain of $L$, in particular
$LW \in \L_2$. But in some situations one can also relax the latter, provided all calculations
can be justified. \hfill $\diamondsuit$
\end{remark}

\subsection{Weighted Poincar\'e inequality and weighted Cheeger inequality.}\label{secgeneral}
In this section we derive weighted Poincar\'e and weighted Cheeger inequalities from the existence of a $\phi$-Lyapunov function.

\begin{definition}\label{defweight}
We say that $\mu$ satisfies a weighted Cheeger (resp. Poincar\'e) inequality with weight $\omega$
(resp. $\eta$) if for some $C,D >0$ and all $g \in \mathcal{A}$ with $\mu$-median equal to $0$,
\begin{equation}\label{eqwcheeg}
\int \, |g| \, d\mu \, \leq \, C \, \int \, \sqrt{\Gamma (g)} \, \omega \, d\mu \, ,
\end{equation}
respectively, for all $g \in \mathcal{A}$,
\begin{equation}\label{eqwpoinc}
\Var_\mu(g) \, \leq \, D \,  \int \, \Gamma (g) \, \eta \, d\mu \, .
\end{equation}
\end{definition}
The standard method shows that if \eqref{eqwcheeg} holds, then \eqref{eqwpoinc} also holds with
$D=4C^2$ and  $\eta=\omega^2$ (see Corollary \ref{cor}).

In order to deal with the ``local'' part $b \mathds{1}_K$ in the definition of a $\phi$-Lyapunov function,
we shall use the notion of local Poincar\'e inequality we introduce now.

\begin{definition}\label{defpoincloc}
Let $U \subset E$. We shall say that $\mu$ satisfies a local Poincar\'e
inequality on $U$ if there exists some constant $\kappa_U$ such that for all
$f \in \mathcal{A}$
$$
\int_U \, f^2 \, d\mu \, \leq \, \kappa_U \, \int_E \Gamma(f) d\mu \, + \, (1/\mu(U)) \,
\left(\int_U \, f \, d\mu\right)^2 \, .
$$
\end{definition}
Notice that in the right hand side the energy is taken over the whole space $E$ (unlike the usual definition). Moreover,
$\int_U \, f^2 \, d\mu - (1/\mu(U)) \, \left(\int_U \, f \, d\mu\right)^2 = \mu(U)\Var_{\mu_U}(f)$
with $\frac{d\mu_U}{d\mu}:= \frac{\mathds{1}_U}{\mu(U)}$. This justifies the name ``local Poincar\'e inequality''.

Now we state our first general result.
\begin{theorem}[Weighted Poincar\'e inequality]\label{thmwp}
Assume that there exists some $\phi$-Lyapunov function $W\in \mathcal{A}$ (see Definition \ref{deflyapphi}) and
that $\mu$ satisfies a local Poincar\'e inequality on some subset $U\supseteq K$ . Then
for all $g \in \mathcal{A}$, it holds

\begin{equation}\label{eqwp}
\Var_\mu(g) \, \leq \, \max \left( \frac{b \kappa_U}{\phi(1)}, 1 \right)  \int \, \left(1 + \frac{1}{\phi'(W)}\right) \, \Gamma(g) \, d\mu
\, .
\end{equation}
\end{theorem}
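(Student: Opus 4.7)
The strategy is the Lyapunov approach of \cite{BBCG}: write $h^2 \leq h^2\,\phi(W)/\phi(W)$, replace $\phi(W)$ in the numerator by the drift bound $-LW + b\mathds{1}_K$, integrate by parts and use the chain rule for $\Gamma$, then close the estimate with a well-chosen Young inequality that makes a negative term cancel. Because $\Gamma$ kills constants, I first center: put $c := \mu(U)^{-1}\int_U g\,d\mu$ and $h := g-c$. Then $\Gamma(h) = \Gamma(g)$, $\int_U h\,d\mu = 0$, and $\Var_\mu(g) \leq \int h^2\,d\mu$, so it suffices to estimate $\int h^2\,d\mu$.

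Applying the drift inequality and splitting,
$$\int h^2\,d\mu \;\leq\; -\int \frac{h^2\,LW}{\phi(W)}\,d\mu \;+\; b\int_K \frac{h^2}{\phi(W)}\,d\mu.$$
The first term is the main one. Integration by parts via \eqref{eq:gamma} rewrites it as $\int \Gamma\bigl(h^2/\phi(W),W\bigr)\,d\mu$, and the diffusion chain rule gives
$$\Gamma\!\left(\frac{h^2}{\phi(W)},W\right) \;=\; \frac{2h}{\phi(W)}\,\Gamma(h,W) \;-\; \frac{h^2\,\phi'(W)}{\phi(W)^2}\,\Gamma(W).$$
The key step is then to bound the cross term by Cauchy--Schwarz, $\Gamma(h,W)\leq\sqrt{\Gamma(h)\Gamma(W)}$, followed by $2\alpha\beta \leq \alpha^2+\beta^2$ with the choice $\alpha = |h|\sqrt{\phi'(W)\Gamma(W)}/\phi(W)$ and $\beta = \sqrt{\Gamma(g)/\phi'(W)}$. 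The $\alpha^2$ term exactly cancels the negative contribution $-h^2\phi'(W)\Gamma(W)/\phi(W)^2$, and only $\beta^2 = \Gamma(g)/\phi'(W)$ survives, so
$$-\int \frac{h^2\,LW}{\phi(W)}\,d\mu \;\leq\; \int \frac{\Gamma(g)}{\phi'(W)}\,d\mu.$$

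For the local term, the monotonicity of $\phi$ and $W\geq 1$ give $\phi(W)\geq \phi(1)$, and $K\subseteq U$ gives $b\int_K h^2/\phi(W)\,d\mu \leq (b/\phi(1))\int_U h^2\,d\mu$. Since $\int_U h\,d\mu=0$, the local Poincar\'e inequality on $U$ applied to $h$ collapses the correcting square and yields $\int_U h^2\,d\mu \leq \kappa_U \int \Gamma(g)\,d\mu$. Adding the two contributions,
$$\Var_\mu(g) \;\leq\; \int \frac{\Gamma(g)}{\phi'(W)}\,d\mu \;+\; \frac{b\,\kappa_U}{\phi(1)} \int \Gamma(g)\,d\mu,$$
and factoring out $\max(b\kappa_U/\phi(1),1)$, which is $\geq 1$ and hence also dominates the implicit coefficient $1$ in front of the $\Gamma(g)/\phi'(W)$ piece, produces the announced inequality.

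The only non-routine step is the Young inequality in the chain-rule estimate: one must spot that splitting with weight $\phi'(W)$ makes the unique negative term from $\Gamma(h^2/\phi(W),W)$ absorb half of the cross term \emph{exactly}, leaving a clean weight $1/\phi'(W)$ and nothing else. Aside from this, one should check that $h^2/\phi(W)$ may legitimately be inserted into \eqref{eq:gamma}; this is ensured by the working assumption $W\in\mathcal{A}$ together with the density of $\mathcal{A}$ in the domain of $L$ and a standard truncation/approximation argument, as already alluded to in the remark following Definition \ref{deflyapphi}.
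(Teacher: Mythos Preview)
Your proof is correct and follows the paper's argument essentially verbatim: the same centering on $U$, the same split via the drift condition, and the identical Cauchy--Schwarz/Young estimate on $\Gamma(h^2/\phi(W),W)$ that makes the negative term cancel. The only cosmetic difference is that the paper isolates your ``key step'' as a standalone lemma (Lemma~\ref{lem:philyapounov}), whereas you inlined it.
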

\begin{proof}
Let $g \in \mathcal{A}$, choose $c$ such that $\int_U (g-c) d\mu = 0$ and set $f = g-c$.
Since $\Var_\mu(g)=\inf_a \, \int (g-a)^2 \, d\mu$, 
we have
$$
\Var_\mu(g) \leq \int f^2 d\mu \leq \int   \, \frac{-LW}{\phi(W)} \, f^2 \,
d\mu + \int f^2 \, \frac{b}{\phi(W)} \, \mathds{1}_K \, d\mu \, .
$$
To manage the second term, we first use that $\Phi(W) \geq \Phi(1)$. Then, the definition of $c$ and the local Poincar\'e inequality ensures that
\begin{eqnarray*}
\int_K \,  f^2 \, d\mu & \leq & \int_U \,  f^2 \, d\mu \\ & \leq & \kappa_U \, \int_E \Gamma(f)
d\mu \, + \, (1/\mu(U)) \, \left(\int_U \, f \, d\mu\right)^2 \\ & = & \kappa_U \, \int_E
\Gamma(g) d\mu \, .
\end{eqnarray*}
\medskip
For the first term, we use Lemma \ref{lem:philyapounov} below (with $\psi=\phi$ and $h=W$). This ends the proof.
\end{proof}
\smallskip

\begin{lemma} \label{lem:philyapounov}
 Let $\psi : \mathbb{R}^+ \to \mathbb{R}^+$ be a $\mathcal{C}^1$ increasing function. Then, for any $f, h \in \mathcal{A}$,
$$
\int  \, \frac{-Lh}{\psi(h)} \, f^2 \, d\mu  \leq  \int \frac { \, \Gamma(f)}{ \psi'(h)} d\mu
$$
\end{lemma}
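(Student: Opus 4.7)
The plan is to exploit the symmetry relation \eqref{eq:gamma}, together with the diffusion (chain rule) and derivation properties of $\Gamma$, so that the left-hand side is transformed into an integral whose integrand pointwise admits a clean upper bound of the form $\Gamma(f)/\psi'(h)$. All steps are routine manipulations once the correct test function is selected; the only ``trick'' is choosing the weights in the Young inequality so that a negative term cancels exactly.

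First, I would apply \eqref{eq:gamma} with test function $g := f^{2}/\psi(h)\in\mathcal{A}$ (legitimate since $\psi>0$ and $\mathcal{A}$ is an algebra stable under smooth composition in the diffusion setting), which gives
\begin{equation*}
\int \frac{-Lh}{\psi(h)}\,f^{2}\,d\mu \;=\; -\int Lh \cdot \frac{f^{2}}{\psi(h)}\,d\mu \;=\; \int \Gamma\!\left(h,\frac{f^{2}}{\psi(h)}\right)d\mu .
\end{equation*}

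Next, using that $\Gamma$ is a derivation in the second argument and the chain rule $\Gamma(h,\Phi(h))=\Phi'(h)\Gamma(h)$, I would expand
\begin{equation*}
\Gamma\!\left(h,\frac{f^{2}}{\psi(h)}\right) \;=\; \frac{1}{\psi(h)}\,\Gamma(h,f^{2}) \,+\, f^{2}\,\Gamma\!\left(h,\tfrac{1}{\psi(h)}\right) \;=\; \frac{2f}{\psi(h)}\,\Gamma(h,f) \,-\, \frac{f^{2}\psi'(h)}{\psi(h)^{2}}\,\Gamma(h).
\end{equation*}

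The final step is to control the cross term by Cauchy--Schwarz and Young's inequality, with the balance chosen precisely to cancel the negative contribution. Writing $2f\,\Gamma(h,f) \le 2|f|\sqrt{\Gamma(h)\Gamma(f)}$ and applying $2AB \le A^{2}+B^{2}$ with $A=|f|\sqrt{\Gamma(h)\psi'(h)}/\psi(h)$ and $B=\sqrt{\Gamma(f)/\psi'(h)}$ yields
\begin{equation*}
\frac{2f}{\psi(h)}\,\Gamma(h,f) \;\le\; \frac{f^{2}\psi'(h)}{\psi(h)^{2}}\,\Gamma(h) \,+\, \frac{\Gamma(f)}{\psi'(h)} .
\end{equation*}
Substituting back, the two $f^{2}\psi'(h)\Gamma(h)/\psi(h)^{2}$ terms cancel and we are left with the pointwise inequality $\Gamma(h,f^{2}/\psi(h))\le \Gamma(f)/\psi'(h)$. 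Integrating against $\mu$ concludes the proof.

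The only genuine obstacle is the algebraic one of spotting the exact Young split that produces the cancellation; the hypothesis that $\psi$ is $\mathcal{C}^{1}$ and strictly positive is used precisely to make the weights $\psi(h)$ and $\psi'(h)$ appearing in $A,B$ well defined. No integrability issue arises provided one interprets things with $f,h\in\mathcal{A}$ (bounded, in the domain of $L$), as the excerpt already assumes.
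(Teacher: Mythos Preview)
Your proof is correct and follows essentially the same route as the paper: apply the symmetry relation to turn the left-hand side into $\int \Gamma(h,f^{2}/\psi(h))\,d\mu$, expand via the derivation and chain rules, and then use Cauchy--Schwarz together with the Young split $2AB\le A^{2}+B^{2}$ with exactly the weights you chose so that the $f^{2}\psi'(h)\Gamma(h)/\psi(h)^{2}$ terms cancel. The paper's argument is line-for-line the same, so there is nothing to add.
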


\begin{proof}
By \eqref{eq:gamma}, the fact that $\Gamma$ is a derivation and the chain rule formula, we have
\begin{eqnarray*}
\int  \, \frac{-Lh}{\psi(h)} \, f^2 \, d\mu
& = & \int \, \Gamma \left( h, \frac{f^2}{\psi(h)} \right) \, d\mu
=
\int \left(\frac{2 \, f \,
\Gamma(f,h)}{\psi(h)} \, - \, \frac{f^2 \psi'(h) \Gamma(h)}{\psi^2(h)}\right) d\mu \, .
\end{eqnarray*}
Since $\psi$ is increasing and according to Cauchy-Schwarz inequality we get
\begin{eqnarray*}
\frac{f \, \Gamma(f,h)}{\psi(h)}
& \leq &
\frac{f \sqrt{\Gamma(f) \Gamma(h)}}{\psi(h)}
 =
\frac{ \sqrt{\Gamma(f)} }{ \sqrt{\psi'(h)} } \cdot
\frac{f \sqrt{\psi'(h) \Gamma(h)}}{\psi(h)}
\\
&\leq &
\frac{1}{2} \frac{\Gamma(f)}{\psi'(h)} + \frac{1}{2} \, \frac{f^2 \psi'(h)\,
\Gamma(h)}{\psi^2(h)}.
\end{eqnarray*}
The result follows.
\end{proof}

\begin{remark}\label{remtrick}
To be rigorous one has to check some integrability conditions in the previous proof. If $W$
belongs to the domain of $L$, the previous derivation is completely rigorous since we are first
dealing with bounded functions $g$. If we do not have a priori controls on the integrability of
$LW$ (and $\Gamma(f,W)$) one has to be more careful.

In the $\R^n$ case there is no real difficulty provided $K$ is compact and $U$ is for instance a
ball $B(0,R)$. To overcome all difficulties in this case, we may proceed as follows : we  first
assume that $g$ is compactly supported and $f=(g-c) \chi$, where $\chi$ is a non-negative
compactly supported smooth function, such that $\mathds{1}_{U} \leq \chi \leq 1$. All the calculation
above are thus allowed. In the end we choose some sequence $\chi_k$ satisfying $\mathds{1}_{kU} \leq
\chi_k \leq 1$, and such that $|\nabla \chi_k|\leq 1$, and we go to the limit. \hfill
$\diamondsuit$
\end{remark}

\begin{remark}
Very recently, two of the authors and various coauthors have pushed forward the links between Lyapunov functionals (and local inequalities) and usual functional inequalities. for example if $\phi$ (in the Lyapunov condition) is assumed to be linear, then we recover the results in \cite{BBCG}, namely a Poincar\'e inequality (and a short proof of Bobkov's result on logconcave probability measure satisfying spectral gap inequality). If $\phi$ is superlinear, then the authors of \cite{CGWW} have obtained super-Poincar\'e inequalities, including nice alternative proofs of Bakry-Emery or Kusuocka-Stroock criterion for logarithmic Sobolev inequality.
$\diamondsuit$
\end{remark}
\bigskip

The same ideas can be used to derive $\L_1$ weighted Poincar\'e (or weighted Cheeger)
inequalities.

Consider $f$ an arbitrary smooth function with median w.r.t. $\mu$ equal to 0. Assume that $W$ is
a $\phi$-Lyapunov function. Then if $f=g-c$,

\begin{eqnarray*}
\int|f|d\mu&\le&\int |f| \frac{-LW}{\phi(W)}d\mu+b \, \int_K \, \frac{|f|}{\phi(W)} \, d\mu\\
&\le&
\int\Gamma\left( \frac{|f|}{\phi(W)},W\right)+\frac{b}{\phi(1)} \, \int_K \, |f| \, d\mu\\
&\le&
\int \frac{\Gamma(|f|,W)}{\phi(W)}d\mu \,
- \, \int \frac{|f|\Gamma(W)\phi'(W)}{\phi^2(W)}d\mu
+ \frac{b}{\phi(1)} \, \int_K \, |f| d\mu \, .
\end{eqnarray*}
Now we use Cauchy-Schwarz for the first term (i.e. $\Gamma(u,v) \leq \sqrt{\Gamma(u)} \,
\sqrt{\Gamma(v)}$) in the right hand side, we remark that the second term is negative since
$\phi'$ is positive, and we can control the last one as before if we assume a local Cheeger
inequality, instead of a local Poincar\'e inequality. We have thus obtained

\begin{theorem}\label{thmcheeg}
Assume that there exists a $\phi$-Lyapunov function $W$ and $\mu$ satisfies some local Cheeger
inequality
$$
\int_U \, |f| \, d\mu \, \leq \, \kappa_U \, \int_E \sqrt{\Gamma(f)} d\mu \, ,
$$ for some $U\supseteq K$ and
all $f$ with median w.r.t. $\mathds{1}_U \, \mu/ \mu(U)$ equal to $0$. Then
for all $g \in \mathcal{A}$ with median w.r.t. $\mu$ equal to 0, it holds
\begin{equation}\label{eqcheeg}
\int \, |g| \, d\mu \, \leq \, \max \left( \frac{b \kappa_U}{\phi(1)}, 1 \right) \, \int  \, \left(1 +
\frac{\sqrt{\Gamma(W)}}{\phi(W)}\right) \, \sqrt{\Gamma(g)} \, d\mu \, .
\end{equation}
\end{theorem}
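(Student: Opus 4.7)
The plan is to mimic the proof of Theorem \ref{thmwp}, but replacing $f^2$ by $|f|$, the local Poincar\'e inequality by the local Cheeger inequality, and Lemma \ref{lem:philyapounov} by a direct Cauchy--Schwarz estimate on $\Gamma(W,|f|)$. Let $g \in \mathcal{A}$ with $\mu$-median equal to $0$, and choose $c$ so that $f := g - c$ has median $0$ with respect to the probability measure $\mathds{1}_U\,\mu/\mu(U)$. Since the median minimizes $c \mapsto \int |g-c|\,d\mu$, we have $\int |g|\,d\mu \le \int |f|\,d\mu$, and $\Gamma(f)=\Gamma(g)$; it therefore suffices to bound $\int |f|\,d\mu$.

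The Lyapunov condition gives $\phi(W) \le -LW + b\,\mathds{1}_K$, so dividing by $\phi(W)>0$ and multiplying by $|f|$ yields
\begin{equation*}
\int |f|\,d\mu \;\le\; \int \frac{-LW}{\phi(W)}\,|f|\,d\mu \;+\; b\int_K \frac{|f|}{\phi(W)}\,d\mu .
\end{equation*}
For the main term, the symmetry relation \eqref{eq:gamma}, the derivation property of $\Gamma$ and the chain rule give
\begin{equation*}
\int \frac{-LW}{\phi(W)}\,|f|\,d\mu \;=\; \int \frac{\Gamma(W,|f|)}{\phi(W)}\,d\mu \;-\; \int \frac{|f|\,\phi'(W)\,\Gamma(W)}{\phi^2(W)}\,d\mu .
\end{equation*}
The second integral is non-negative since $\phi'>0$, so it can be discarded. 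Cauchy--Schwarz on $\Gamma(W,|f|)$ then bounds the first integral by $\int \bigl(\sqrt{\Gamma(W)}/\phi(W)\bigr)\sqrt{\Gamma(g)}\,d\mu$.

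For the remaining localized term, the bound $\phi(W) \ge \phi(1)$ together with $K \subset U$ gives $b\int_K |f|/\phi(W)\,d\mu \le (b/\phi(1)) \int_U |f|\,d\mu$, and the local Cheeger inequality applied to $f$ (which by construction has local median $0$) bounds this by $(b\kappa_U/\phi(1))\int \sqrt{\Gamma(g)}\,d\mu$. Adding the two estimates and factoring out $\max(b\kappa_U/\phi(1),1)$ in front of the two terms $1$ and $\sqrt{\Gamma(W)}/\phi(W)$ yields \eqref{eqcheeg}. The only delicate point is that $|f|$ is not in $\mathcal{A}$, so the chain-rule manipulations above are not literally legal; this is handled as in Remark \ref{remtrick} by first replacing $|f|$ by a smooth approximation such as $\sqrt{f^2+\varepsilon}$, truncating $g$ and multiplying by a cutoff $\chi$ with $\mathds{1}_U \le \chi \le 1$, carrying out the computation, and then letting $\varepsilon \downarrow 0$ and the cutoff tend to $1$. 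Apart from this regularization, the argument is purely algebraic.
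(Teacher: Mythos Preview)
Your proof is correct and follows essentially the same route as the paper: split via the Lyapunov condition, integrate by parts to get $\Gamma(|f|,W)/\phi(W)$ minus a non-negative term, apply Cauchy--Schwarz on the bilinear form, and handle the localized piece via the local Cheeger inequality. You are in fact slightly more careful than the paper in spelling out the choice of $c$ (so that $f=g-c$ has local median $0$, combined with the median-minimization property $\int|g|\,d\mu\le\int|f|\,d\mu$) and in flagging the regularization of $|f|$; the paper leaves both of these implicit.
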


Again one has to be a little more careful in the previous proof, with integrability conditions,
but difficulties can be overcome as before.
\medskip

It is well known that Cheeger inequality implies Poincar\'e inequality. This is also true for weighted inequalities:

\begin{corollary}\label{cor}
Under the assumptions of Theorem \ref{thmcheeg}, for all $g \in \mathcal{A}$, it holds
$$
\Var_\mu(g) \,  \leq \, 8 \, \max \left( \frac{b \kappa_U}{\phi(1)}, 1 \right)^2  \,
\int \, \left(1 + \frac{\Gamma(W)}{\phi^2(W)}\right) \, \Gamma(g) \, d\mu \, .
$$

\end{corollary}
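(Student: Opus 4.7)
The plan is the standard Cheeger-to-Poincaré argument, adapted to the weighted setting. Let $m$ be a median of $g$ with respect to $\mu$, set $h := g-m$, and apply Theorem \ref{thmcheeg} to the function $f := h|h|$. The point of squaring (with a sign) is twofold: $f$ has $\mu$-median $0$ since $u \mapsto u|u|$ is strictly monotone and $h$ has median $0$; and the diffusion chain rule $\Gamma(\Psi(h)) = \Psi'(h)^2 \Gamma(h)$ applied to $\Psi(u) = u|u|$ (with $\Psi'(u) = 2|u|$) gives $\sqrt{\Gamma(f)} = 2|h|\sqrt{\Gamma(g)}$, while $|f| = h^2$.

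Denoting $C := \max(b\kappa_U/\phi(1),1)$ and $\omega := 1 + \sqrt{\Gamma(W)}/\phi(W)$, Theorem \ref{thmcheeg} therefore yields
\begin{equation*}
\int h^2 \, d\mu \, \leq \, 2C \int \omega \, |h| \, \sqrt{\Gamma(g)} \, d\mu.
\end{equation*}
Next I apply the Cauchy--Schwarz inequality to the right-hand side, separating $|h|$ from $\omega\sqrt{\Gamma(g)}$:
\begin{equation*}
\int \omega \, |h| \, \sqrt{\Gamma(g)} \, d\mu \, \leq \, \left(\int h^2 \, d\mu\right)^{1/2} \left(\int \omega^2 \, \Gamma(g) \, d\mu\right)^{1/2}.
\end{equation*}
Dividing through by $\bigl(\int h^2 d\mu\bigr)^{1/2}$ (which we may assume nonzero) gives $\int h^2 d\mu \leq 4C^2 \int \omega^2 \, \Gamma(g) \, d\mu$.

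To conclude, I use $\Var_\mu(g) \leq \int h^2 \, d\mu$ (since the variance is the infimum over constants) and the elementary bound $\omega^2 = (1+\sqrt{\Gamma(W)}/\phi(W))^2 \leq 2(1 + \Gamma(W)/\phi^2(W))$. Combining these with the previous display produces exactly the constant $8C^2$ claimed in the corollary. There is no real obstacle here: the only small point to watch is the applicability of the chain rule to $\Psi(u) = u|u|$, which is only $C^1$; this is handled by the same regularization argument alluded to in Remark \ref{remtrick} (approximate $\Psi$ by smooth monotone functions, apply the weighted Cheeger inequality, and pass to the limit by dominated convergence using that $g\in\mathcal{A}$ is bounded).
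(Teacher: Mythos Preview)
Your proof is correct and follows essentially the same Cheeger-to-Poincar\'e route as the paper: apply the weighted Cheeger inequality to a ``square'' of $g-m$, then use Cauchy--Schwarz and the bound $\omega^2\le 2(1+\Gamma(W)/\phi^2(W))$. The only cosmetic difference is that the paper splits $g$ (assumed of median $0$) into its positive and negative parts, applies Theorem \ref{thmcheeg} to $g_+^2$ and $g_-^2$ separately, and sums using $\Gamma(g_+)+\Gamma(g_-)=\Gamma(g)$, whereas you treat both halves simultaneously via the signed square $h|h|$; both variants are standard and yield the same constant.
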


\begin{proof}
As suggested in the proof of Theorem 5.1 in \cite{BLweight}, if $g$ has a $\mu$ median equal to 0,
$g_+=\max(g,0)$ and $g_-=\max(-g,0)$ too. We may thus apply Theorem \ref{thmcheeg} to both $g_+^2$ and $g_-^2$,
yielding
$$
\int \, g_+^2 \, d\mu \, \leq \, 2 \max \left( \frac{b \kappa_U}{\phi(1)}, 1 \right) \, \int \, g_+ \, \sqrt{\Gamma(g_+)} \, \left(1 +
\frac{\sqrt{\Gamma(W)}}{\phi(W)}\right) \, d\mu \,
$$
and similarly for $g_-$. Applying Cauchy-Schwarz
inequality, and using the elementary $(a+b)^2 \leq 2 a^2 + 2 b^2$ we get that
$$
\int \, g_+^2 \, d\mu \leq \, 8 \, \max \left( \frac{b \kappa_U}{\phi(1)}, 1 \right)^2  \,
\int \, \left(1 + \frac{\Gamma(W)}{\phi^2(W)}\right) \, \Gamma(g_+) \, d\mu \,
$$
and similarly for $g_-$.
To conclude the proof, it remains to sum-up the positive and the negative parts
and to notice that $\Var_\mu(g) \leq  \int \, g^2 \, d\mu$.
\end{proof}

Note that the forms of weight obtained respectively in Theorem \ref{thmwp} and last corollary are different. But, up to
constant, they are of the same order in all examples we shall treat in the following section.


\subsection{Examples in $\R^n$.}\label{secexample}

We consider here the $\R^n$ situation with $d \mu(x)=p(x) dx$ and $L=\Delta + \nabla \log
p.\nabla$, $p$ being smooth enough. We can thus use the argument explained in remark \ref{remtrick}
so that as soon as $W$ is $C^2$ one may apply Theorem \ref{thmwp} and Theorem \ref{thmcheeg}.

Recall the following elementary lemma whose proof can be found in \cite{BBCG}.

\begin{lemma}\label{lemfranck}
If $V$ is convex and $\int e^{-V(x)} \, dx < +\infty$, then
\begin{itemize} \item[(1)] \quad for all $x$, $x.\nabla V(x) \geq V(x) - V(0)$, \item[(2)] \quad
there exist $\delta >0$ and $R>0$ such that for $|x|\geq R$, $V(x)-V(0) \geq \delta \, |x|$.
\end{itemize}
\end{lemma}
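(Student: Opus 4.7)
The plan is to treat the two parts separately: part (1) is a one-line consequence of convexity, while part (2) requires combining convexity with the integrability of $e^{-V}$ via a sublevel-set argument. The intuition for (2) is that heavy (slower than linear) growth is incompatible with integrability in the convex setting, because the sublevel sets of a convex function are convex and an unbounded convex set with non-empty interior has infinite Lebesgue measure.

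For (1), I would invoke the subgradient inequality for the convex function $V$: for every $x,y \in \mathbb{R}^n$,
\[
V(y) \;\geq\; V(x) + \nabla V(x)\cdot(y-x).
\]
Taking $y=0$ gives $V(0) \geq V(x) - x\cdot \nabla V(x)$, i.e. $x\cdot \nabla V(x) \geq V(x)-V(0)$, which is exactly (1). (If $V$ is not differentiable at some point, one replaces $\nabla V(x)$ by any subgradient.)

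For (2), my first step is to show that the sublevel set $A := \{V \leq V(0)+1\}$ is bounded. It is convex (sublevel set of a convex function), and it has non-empty interior since $V$ is continuous and $V(0) < V(0)+1$. On $A$ we have $e^{-V} \geq e^{-V(0)-1}$, so integrability of $e^{-V}$ forces $|A| < \infty$. Since any unbounded closed convex set with non-empty interior contains a cone of infinite volume (obtained as the convex hull of a fixed ball inside it together with an unbounded sequence of its points), such a set must have infinite Lebesgue measure. Hence $A$ is bounded, say $A \subset B(0,R_1)$. This step—justifying that "unbounded convex set with interior $\Rightarrow$ infinite volume"—is the main conceptual point.

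The second step exploits the one-dimensional monotonicity coming from convexity along rays. For any unit vector $u \in \mathbb{S}^{n-1}$, convexity of $V$ together with the value $V(0)$ at $t=0$ implies that
\[
t \;\longmapsto\; g_u(t) \,:=\, \frac{V(tu)-V(0)}{t}
\]
is non-decreasing on $(0,\infty)$: for $0<s<t$, writing $su$ as a convex combination of $0$ and $tu$ gives $V(su) \leq (1-s/t)V(0) + (s/t)V(tu)$, i.e. $g_u(s) \leq g_u(t)$. Now pick any $x$ with $|x| \geq R := R_1+1$ and set $u = x/|x|$, $t_0 = R_1+1$. Since $|t_0 u| = R_1+1 > R_1$, the point $t_0 u$ lies outside $A$, so $V(t_0 u) > V(0)+1$, hence $g_u(t_0) > 1/(R_1+1)$. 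By monotonicity $g_u(|x|) \geq g_u(t_0) > 1/(R_1+1)$, which gives
\[
V(x) - V(0) \;\geq\; \delta\,|x|, \qquad \delta := \frac{1}{R_1+1},
\]
for every $|x|\geq R$, proving (2).
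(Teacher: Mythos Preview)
Your proof is correct. The paper does not actually prove this lemma itself---it simply states that the proof can be found in \cite{BBCG}---so there is no in-paper argument to compare against. Your argument for (1) is the standard one-line application of the subgradient inequality, and your treatment of (2) (boundedness of the sublevel set $\{V\le V(0)+1\}$ via finite volume, followed by monotonicity of the difference quotient $t\mapsto (V(tu)-V(0))/t$ along rays) is clean and complete; the only step that deserves a remark is the implication ``convex with non-empty interior and finite Lebesgue measure $\Rightarrow$ bounded'', and the cone/convex-hull argument you sketch is a valid way to justify it.
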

%
%

We shall use this Lemma in the following examples.
Our first example corresponds to the convex case discussed by Bobkov and Ledoux \cite{BLweight}.

\begin{proposition}[Cauchy type law]\label{exconvexe}
Let $d\mu(x) = (V(x))^{-(n+\alpha)} \, dx$ for some positive convex function $V$ and
$\alpha> 0$. Then there exists $C > 0$ such that for all $g$
\begin{gather*}
\Var_\mu(g) \, \leq \, C \int \, |\nabla g(x)|^2 \, (1+|x|^2)
\, d\mu(x),\\
\int \, |g-m| \, d\mu \, \leq \, C \int \,
|\nabla g(x)| \, (1 + |x|) \, d\mu(x),
\end{gather*}
where $m$ stands for a median of $g$ under $\mu$.
\end{proposition}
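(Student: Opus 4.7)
The plan is to apply Theorems \ref{thmwp} and \ref{thmcheeg} with a $\phi$-Lyapunov function that does \emph{not} depend on $V$. Since $L=\Delta -(n+\alpha)(\nabla V/V)\cdot\nabla$, choosing $W$ as a function of $V$ would bring the uncontrolled quantity $\Delta V$ into $LW$. The direct choice
\[
W(x)=(1+|x|^2)^{s/2},\qquad s\in(2,\alpha+2),
\]
bypasses this difficulty; the interval is nonempty precisely because $\alpha>0$.

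I would then compute (with no appearance of $\Delta V$)
\[
LW(x)=s(1+|x|^2)^{s/2-1}\!\left[n+(s-2)\frac{|x|^2}{1+|x|^2}-(n+\alpha)\frac{x\cdot\nabla V(x)}{V(x)}\right].
\]
Convexity of $V$ enters only through Lemma \ref{lemfranck}(1): $x\cdot\nabla V\geq V-V(0)$, so $(x\cdot\nabla V)/V\geq 1-V(0)/V$. Since $\mu$ is a probability measure, $V(x)\to\infty$ as $|x|\to\infty$, and thus the bracket above tends to $n+(s-2)-(n+\alpha)=s-2-\alpha<0$. Hence there exist $c,R>0$ with
\[
LW(x)\leq -c(1+|x|^2)^{s/2-1}=-c\,W(x)^{(s-2)/s}\qquad\text{for } |x|\geq R.
\]
Setting $\phi(r)=c\,r^{(s-2)/s}$ (a $\mathcal{C}^1$, positive, increasing function on $(0,\infty)$ since $s>2$), $K=\overline{B(0,R)}$, and choosing $b\geq 0$ to absorb $LW+\phi(W)$ on the compact $K$, $W$ becomes a $\phi$-Lyapunov function in the sense of Definition \ref{deflyapphi}.

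It then suffices to take $U$ to be a slightly larger ball $B(0,R')$, on which the classical local Poincar\'e and local Cheeger inequalities hold (the measure has smooth positive density on a bounded domain). Theorem \ref{thmwp} yields the first claim with weight
\[
1+\frac{1}{\phi'(W)}=1+\frac{s}{c(s-2)}(1+|x|^2)\,\lesssim\, 1+|x|^2,
\]
while Theorem \ref{thmcheeg} yields the second with weight
\[
1+\frac{\sqrt{\Gamma(W)}}{\phi(W)}=1+\frac{s\,|x|\,(1+|x|^2)^{s/2-1}}{c\,(1+|x|^2)^{(s-2)/2}}=1+\frac{s\,|x|}{c}\,\lesssim\, 1+|x|.
\]

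The central conceptual point is the choice of $W$ as a function of $|x|$ alone, which removes any need for a Hessian-type bound on $V$; convexity is used only in the soft form of Lemma \ref{lemfranck}(1). The remaining verifications, namely the local inequalities on $U$ and the cutoff justification of the formal computations (as in Remark \ref{remtrick}), are standard and I expect them to pose no real difficulty.
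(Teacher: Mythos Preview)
Your proof is correct and follows essentially the same approach as the paper. The paper's Lemma \ref{lem:convex} constructs $W(x)=|x|^k$ for large $|x|$ (and modifies it near the origin) with $k\in(2,2+\alpha)$, yielding $\phi(u)=cu^{(k-2)/k}$; your globally smooth choice $W(x)=(1+|x|^2)^{s/2}$ with $s\in(2,2+\alpha)$ is the same function up to a cosmetic smoothing at the origin, and the resulting $\phi$, the weights $1/\phi'(W)\asymp 1+|x|^2$ and $\sqrt{\Gamma(W)}/\phi(W)\asymp 1+|x|$, and the use of Lemma \ref{lemfranck} to control the bracket are identical to the paper's argument.
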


\begin{remark}
The restriction $\alpha >0$ is the same as in \cite{BLweight}.
\end{remark}

\begin{proof}
By Lemma \ref{lem:convex} below, there exists a $\phi$-Lyapunov function $W$ satisfying
 $(1/\phi'(W))(x)= \frac{k}{c(k-2)} |x|^2$ for $x$ large.
Hence, in order to apply Theorem \ref{thmwp} it remains to recall that since $d\mu/dx$ is bounded from
below and from above on any ball $B(0,R)$, 
 $\mu$ satisfies a Poincar\'e inequality and a
Cheeger inequality on such subset, hence a local Poincar\'e (and Cheeger) inequality in the sense
of definition \ref{defpoincloc} (or Theorem \ref{thmcheeg}). This ends the proof.
\end{proof}

\begin{lemma}\label{lem:convex}
Let $L= \Delta - (n+\alpha) (\nabla V/V) \nabla$ with $V$ and $\alpha$ as in Proposition \ref{exconvexe}. Then,
there exists $k>2$,  $b, R>0$ and $W \geq 1$ such that
$$
LW \leq - \phi(W) + b \mathds{1}_{B(0,R)}
$$
with $\phi(u)=c u^{(k-2)/k}$ for some constant $c>0$.  Furthermore, one can choose
$W(x)=|x|^k$ for $x$ large.
\end{lemma}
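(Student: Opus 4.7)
The plan is to take $W(x)=|x|^{k}$ outside a large ball, extend it smoothly and boundedly inside so that $W\ge 1$ everywhere, compute $LW$ directly, and use the convexity/integrability of $V$ through Lemma \ref{lemfranck} to show the required drift inequality holds with $\phi(u)=cu^{(k-2)/k}$.

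First I would compute, for $W(x)=|x|^{k}$ with $k>2$:
\begin{equation*}
\nabla W(x) = k|x|^{k-2}x, \qquad \Delta W(x)=k(n+k-2)|x|^{k-2},
\end{equation*}
so that
\begin{equation*}
LW(x)=k(n+k-2)|x|^{k-2}-(n+\alpha)\,k|x|^{k-2}\,\frac{x\cdot\nabla V(x)}{V(x)}.
\end{equation*}
By Lemma \ref{lemfranck}(1), $x\cdot\nabla V(x)\ge V(x)-V(0)$, hence
\begin{equation*}
LW(x)\le k|x|^{k-2}\Bigl[(n+k-2)-(n+\alpha)+(n+\alpha)\frac{V(0)}{V(x)}\Bigr]=k|x|^{k-2}\Bigl[(k-2-\alpha)+(n+\alpha)\frac{V(0)}{V(x)}\Bigr].
\end{equation*}

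Now I would pick any $k$ with $2<k<\alpha+2$; this is where the hypothesis $\alpha>0$ is used. Then $k-2-\alpha<0$. By Lemma \ref{lemfranck}(2), $V(x)\to\infty$ as $|x|\to\infty$, so the correction $(n+\alpha)V(0)/V(x)$ tends to $0$. Therefore there exist $R>0$ and a constant $c>0$ such that, for all $|x|\ge R$,
\begin{equation*}
LW(x)\le -c\,|x|^{k-2}=-c\,W(x)^{(k-2)/k}=-\phi(W(x)),
\end{equation*}
with $\phi(u)=cu^{(k-2)/k}$, a $\mathcal{C}^1$ positive increasing function on $\R^+$.

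Finally, I would extend $W$ smoothly from $\{|x|\ge R\}$ to the whole space so that $W\in\mathcal{C}^2$ and $W\ge 1$ on $\R^n$ (for instance by gluing $|x|^k$ with a suitable smooth radial function equal to $1$ near the origin). On the compact set $B(0,R)$, both $LW$ and $\phi(W)$ are bounded, hence there exists $b\ge 0$ with $LW+\phi(W)\le b$ on $B(0,R)$, giving globally
\begin{equation*}
LW\le -\phi(W)+b\,\mathds{1}_{B(0,R)}.
\end{equation*}
The one delicate point is the choice of $k$: it must simultaneously satisfy $k>2$ (so that $\phi$ is increasing and $W$ provides a genuine drift at infinity) and $k<\alpha+2$ (so that the leading coefficient in the bracket is negative). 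This two-sided constraint is exactly what forces $\alpha>0$, matching the hypothesis of Proposition \ref{exconvexe}.
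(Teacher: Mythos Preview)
Your proof is correct and follows essentially the same approach as the paper: compute $LW$ for $W(x)=|x|^k$, use Lemma \ref{lemfranck} to bound the bracket by a negative constant for large $|x|$ provided $2<k<2+\alpha$, and conclude by a compactness argument on $B(0,R)$. The only cosmetic difference is that the paper introduces an explicit $\varepsilon$ for $V(0)/V(x)$ and writes the bracket as $k+n\varepsilon-2-\alpha(1-\varepsilon)\le -\gamma$, whereas you directly let the correction term tend to zero; the substance is identical.
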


\begin{proof}
Let $L= \Delta - (n+\alpha) (\nabla V/V) \nabla$ and choose $W \geq 1$
smooth and satisfying $W(x)=|x|^k$ for $|x|$ large enough and $k>2$ that will be chosen later.
For $|x|$ large enough we have
$$
LW(x) = k \, (W(x))^{\frac{k-2}{k}} \, \left(n+k-2 - \frac{(n+\alpha) \, x.\nabla
V(x)}{V(x)}\right) \, .
$$
Using (1) in Lemma \ref{lemfranck} (since $V^{-(n+\alpha)}$ is integrable $e^{-V}$ is also integrable)  we have
$$
n+k-2 - \frac{(n+\alpha) \,
x.\nabla V(x)}{V(x)} \leq k-2 - \alpha + (n+\alpha) \frac{V(0)}{V(x)} \, .
$$
Using (2) in Lemma
\ref{lemfranck} we see that we can
choose $|x|$ large enough for $\frac{V(0)}{V(x)}$ to be less than $\varepsilon$, say
$|x|>R_\varepsilon$. It remains to choose $k>2$ and $\varepsilon>0$ such that
$$
k + n \varepsilon -2 - \alpha (1-\varepsilon)\leq -\gamma
$$
for some $\gamma>0$. We have shown that, for $|x|>R_\varepsilon$,
$$
LW \leq - k \gamma \phi(W),
$$
with $\phi(u) =u^{\frac{k-2}{k}}$ (which is increasing since $k>2$). A compacity argument achieves the proof.
\end{proof}

\begin{remark}
The previous proof gives a non explicit constant $C$ in terms of $\alpha$ and $n$.
This is mainly due to the fact that we are not able to control properly the local Poincar\'e and Cheeger inequalities on balls
for the general measures $d\mu =(V(x))^{-(n+\alpha)} \, dx$. More could be done on specific laws.
\end{remark}

Our next example deals with sub-exponential distributions.

\begin{proposition}[Sub exponential like law]\label{ex2}
Let $d \mu= (1/Z_p) \, e^{- V^p}$
for some positive convex function $V$ and $p>0$.
Then there exists $C > 0$ such that for all $g$
\begin{gather*}
\Var_{\mu}(g) \, \leq \, C \, \int \, |\nabla g(x)|^2 \,
\left(1+(1+|x|)^{2(1-p)}\right) \, d\mu(x) \, ,\\
\int \, |g-m| \, d\mu \,
\leq \, C \, \int \, |\nabla g(x)| \, \left(1 + (1+|x|)^{(1-p)}\right) \,
d\mu(x) \, ,
\end{gather*}
where $m$ stands for a median of $g$ under $\mu$.
\end{proposition}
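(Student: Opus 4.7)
The plan is to mirror the proof of Proposition \ref{exconvexe} step by step: construct a $\phi$-Lyapunov function $W$ for the operator $L=\Delta-pV^{p-1}\nabla V\cdot \nabla$, use that $\mu$ satisfies a local Poincar\'e and a local Cheeger inequality on every ball $B(0,R)$ (automatic since the density $e^{-V^p}/Z_p$ is bounded above and below on compacts), and then apply Theorem \ref{thmwp} and Theorem \ref{thmcheeg}. All of the real work is in producing a sub-exponential analogue of Lemma \ref{lem:convex}.

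In place of the polynomial $W=|x|^{k}$ used in the Cauchy case, I would take the exponential candidate
$$
W(x)=\exp\bigl(aV(x)^{p}\bigr),\qquad 0<a<1,
$$
modified on a compact set to be $\mathcal{C}^{2}$ and $\ge 1$ everywhere. A direct chain-rule computation (writing $W=e^{f}$ with $f=aV^{p}$) gives
$$
\tfrac{LW}{W} \,=\, apV^{p-1}\Delta V \,+\, ap(p-1)V^{p-2}|\nabla V|^{2} \,-\, a(1-a)p^{2} V^{2(p-1)}|\nabla V|^{2}.
$$
Lemma \ref{lemfranck} provides $V(x)\ge \delta|x|$ for $|x|$ large and, via $|x|\,|\nabla V(x)|\ge x\cdot \nabla V(x)\ge V(x)-V(0)$, also $|\nabla V(x)|\ge \delta/2$ for $|x|$ large. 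Since $2(p-1)>p-2$, the negative third term dominates the other two at infinity, yielding
$$
LW \,\le\, -c\, V^{2(p-1)}\, W + b\,\mathds{1}_{B(0,R)} \,\le\, -c'\,|x|^{2(p-1)}\, W + b\,\mathds{1}_{B(0,R)}.
$$
Rewriting $|x|^{2(p-1)} \sim (\log W)^{2(p-1)/p}$ converts this into a genuine Lyapunov condition with
$$
\phi(u) \,=\, c\,u(\log u)^{2(p-1)/p}
$$
for large $u$, extended as a $\mathcal{C}^{1}$ positive increasing function on $\mathbb{R}^{+}$.

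The hard part will be taming the positive term $apV^{p-1}\Delta V$: convexity of $V$ only yields $\Delta V\ge 0$, not an upper bound. For the prototype $V(x)=|x|$ one has $\Delta V=(n-1)/|x|$, so $V^{p-1}\Delta V\sim|x|^{p-2}$ is comfortably dominated by $V^{2(p-1)}|\nabla V|^{2}\sim |x|^{2(p-1)}$ (as $2(p-1)-(p-2)=p>0$); for a general convex $V$ one must use the integrability of $e^{-V^{p}}$ to control $\Delta V$ at infinity, possibly by enlarging the exceptional ball. As in the remark following Proposition \ref{exconvexe}, this is also the step that prevents the final constants from being fully explicit in $n$ and $p$.

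Once the Lyapunov condition is in place, the two announced inequalities drop out immediately. Theorem \ref{thmwp} gives weight $1+1/\phi'(W)$; computing $\phi'(u)\sim(\log u)^{2(p-1)/p}$ and using $\log W\sim a\delta^{p}|x|^{p}$ yields $1/\phi'(W)\sim |x|^{2(1-p)}$, which matches the announced $(1+|x|)^{2(1-p)}$. Similarly Theorem \ref{thmcheeg} gives weight $1+\sqrt{\Gamma(W)}/\phi(W)$; substituting $\sqrt{\Gamma(W)}=apV^{p-1}|\nabla V|\, W$ and $\phi(W)\sim V^{2(p-1)} W$, this ratio reduces (up to constants) to $V^{1-p}|\nabla V|\sim |x|^{1-p}$, which is the announced Cheeger weight $(1+|x|)^{1-p}$.
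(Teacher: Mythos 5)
Your overall strategy -- build a $\phi$-Lyapunov function for $L=\Delta-pV^{p-1}\nabla V\cdot\nabla$ with $\phi(u)\sim u(\log u)^{2(p-1)/p}$, invoke the local Poincar\'e/Cheeger inequalities on balls, and conclude by Theorems \ref{thmwp} and \ref{thmcheeg} -- is exactly the paper's (Lemma \ref{lem:convex2}, whose proof mimics Lemma \ref{lem:convex}). But your specific choice $W=e^{aV^{p}}$ is not the paper's, and the difference is not cosmetic. The paper takes $W(x)=e^{\gamma|x|^{p}}$ for large $|x|$. With that choice $\Delta W$ is explicit, namely $\big(\gamma^{2}p^{2}|x|^{2(p-1)}+\gamma p(n+p-2)|x|^{p-2}\big)W$, and the cross term $-\gamma p^{2}|x|^{p-2}V^{p-1}(x\cdot\nabla V)\,W$ is bounded above by $-c|x|^{2(p-1)}W$ using only Lemma \ref{lemfranck} ($x\cdot\nabla V\ge V-V(0)$ and $V(x)\ge\delta|x|$ at infinity), so for $\gamma$ small one gets $LW\le -c'|x|^{2(p-1)}W$ outside a ball, with no second derivative of $V$ ever appearing.

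Your choice runs into two genuine obstructions. First, the term $apV^{p-1}\Delta V$ that you flag as ``the hard part'' really is fatal for general convex $V$: integrability of $e^{-V^{p}}$ gives no pointwise control on $\Delta V$ at infinity (a convex $V$ with $V(x)\sim\delta|x|$ can have $\Delta V$ arbitrarily large along a sequence going to infinity), and enlarging the exceptional ball does not help since the bad set need not be bounded. Second, even granting the drift condition, your weights come out as $V^{2(1-p)}$ and $V^{1-p}|\nabla V|$, and your final step ``$V^{1-p}|\nabla V|\sim|x|^{1-p}$'' silently uses $V(x)\lesssim|x|$ and $|\nabla V|\lesssim1$; Lemma \ref{lemfranck} only gives the opposite inequalities. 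For $p<1$ (the case of interest) and a superlinear convex $V$ (e.g.\ $V(x)=1+|x|^{2}$), $V^{2(1-p)}\gg|x|^{2(1-p)}$, so you would prove a strictly weaker inequality than the one stated. Replacing $V$ by $|x|$ inside the Lyapunov function, as the paper does, removes both problems at once and delivers the announced weights directly.
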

\begin{remark}
For $p<1$ we get some weighted inequalities, while for $p \geq 1$ we see that
(changing $C$ into $2C$) we obtain the usual Poincar\'e and Cheeger inequalities.
For $p=1$, one recovers the well known fact (see \cite{KLS,bob99}) that Log-concave distributions enjoy Poincar\'e and Cheeger inequalities. Moreover, if we consider the particular case $d\mu(x) = (1/Z_p) \, e^{-|x|^p}$ with $0<p<1$, and
choose $g(x)= e^{|x|^p/2} \, \mathds{1}_{[0,R]}(x)$ for $x\geq 0$ and $g(-x)=-g(x)$, we see
that the weight is optimal in Proposition \ref{ex2}.
\end{remark}

\begin{proof}
The proof follows the same line as the proof of Proposition \ref{exconvexe}, using Lemma \ref{lem:convex2} below.
\end{proof}

\begin{lemma} \label{lem:convex2}
Let $L= \Delta - pV^{p-1} \nabla V \nabla$ for some positive convex function $V$ and $p>0$. Then,
there exists $b, c, R>0$ and $W \geq 1$ such that
$$
LW \leq - \phi(W) + b \mathds{1}_{B(0,R)}
$$
with $\phi(u)= u \, \log^{2(p-1)/p}(c+u)$ increasing. Furthermore, one can choose
$W(x)=e^{\gamma |x|^p}$ for $x$ large.
\end{lemma}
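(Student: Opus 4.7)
The plan is to verify the drift condition by direct computation, choosing $W(x)=e^{\gamma|x|^p}$ outside a large ball (with a smooth modification on a compact set so that $W\geq 1$ everywhere and $W\in\mathcal{C}^2$). The overall strategy mirrors the Cauchy-type case in Lemma \ref{lem:convex}: one computes $LW$ explicitly, uses the two items of Lemma \ref{lemfranck} to extract a strong negative contribution from the drift $-pV^{p-1}\nabla V\cdot\nabla W$, checks that it dominates the Laplacian contribution for small $\gamma$, and finally translates the estimate into an inequality involving $\phi(W)$ via the change of variable $|x|^p=\gamma^{-1}\log W$.

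Concretely, I would first compute, for $|x|>0$,
\begin{equation*}
LW \;=\; \gamma p\,W\Bigl[(n+p-2)|x|^{p-2} \,+\, \gamma p\,|x|^{2(p-1)} \,-\, p\,V(x)^{p-1}|x|^{p-2}\,x\cdot\nabla V(x)\Bigr].
\end{equation*}
By Lemma \ref{lemfranck}(1), $x\cdot\nabla V\geq V-V(0)$, which yields
\begin{equation*}
-p\,V^{p-1}|x|^{p-2}\,x\cdot\nabla V \;\leq\; -p\,V^p|x|^{p-2} \,+\, p\,V(0)\,V^{p-1}|x|^{p-2}.
\end{equation*}
Since $e^{-V^p}$ is integrable, so is $e^{-V}$ (up to constants on large scales), hence Lemma \ref{lemfranck}(2) provides $\delta,R>0$ such that $V(x)\geq\delta|x|$ for $|x|\geq R$. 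Consequently, for $|x|$ large the negative term is at least $p\delta^p|x|^{2p-2}$ in absolute value, while the term $\gamma p|x|^{2(p-1)}$ (which also scales like $|x|^{2p-2}$) can be made arbitrarily small compared to it by choosing $\gamma$ small; the remaining terms scale like $|x|^{p-2}$, which is of strictly lower order. Therefore one obtains, for $|x|\geq R'$ (some $R'\geq R$) and a constant $c_1>0$,
\begin{equation*}
LW \;\leq\; -\,c_1\,W\,|x|^{2(p-1)}.
\end{equation*}

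Next I translate this bound into the stated form. Using $|x|^p=\gamma^{-1}\log W$, one has $|x|^{2(p-1)}=\gamma^{-2(p-1)/p}(\log W)^{2(p-1)/p}$, so the previous inequality reads
\begin{equation*}
LW \;\leq\; -c_2\,W\,(\log W)^{2(p-1)/p}
\end{equation*}
for $|x|\geq R'$ and some $c_2>0$. Now pick $c\geq 1$ large enough that $\phi(u)=u\log^{2(p-1)/p}(c+u)$ is increasing on $[1,\infty)$: this is automatic when $p\geq 1$, and when $p<1$ it suffices to take $c$ so large that $\log c$ dominates the bounded quantity $2(1-p)/p$ in $\phi'$. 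Since $\log(c+W)\sim \log W$ as $|x|\to\infty$, by adjusting the multiplicative constant (which can be absorbed into $\phi$ or, equivalently, into $W$ by rescaling $\gamma$), we obtain $LW\leq -\phi(W)$ outside a compact set.

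The final step is routine: on the compact set $\{|x|\leq R'\}$, both $LW$ and $\phi(W)$ are bounded since $W\in\mathcal{C}^2$ and $W\geq 1$, so $LW+\phi(W)\leq b$ for some $b>0$ on that ball, giving the claimed inequality $LW\leq -\phi(W)+b\mathbf{1}_{B(0,R')}$. The only mildly delicate point is tracking the monotonicity of $\phi$ in the sub-linear regime $p<1$, where the exponent $2(p-1)/p$ is negative; this is why the shift $c$ inside the logarithm is present. Everything else is bookkeeping: the convexity of $V$ enters only through Lemma \ref{lemfranck}, and the smoothness of $W$ near the origin is irrelevant thanks to the local term $b\mathbf{1}_{B(0,R')}$.
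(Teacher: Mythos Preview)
Your proof is correct and follows precisely the route the paper intends: it explicitly says ``We omit the details since we can mimic the proof of Lemma \ref{lem:convex}'', and your argument does exactly that --- compute $LW$ for $W=e^{\gamma|x|^p}$, invoke Lemma \ref{lemfranck} to extract the linear lower bound on $V$, balance the $|x|^{2(p-1)}$ terms by choosing $\gamma$ small, and convert back via $|x|^p=\gamma^{-1}\log W$. One small remark: your justification that $e^{-V}$ is integrable is a bit loose; the clean way is to note that integrability of $e^{-V^p}$ forces $V\to\infty$, and a coercive convex function automatically grows at least linearly, which is all Lemma \ref{lemfranck} really needs (and incidentally does make $e^{-V}$ integrable).
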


\begin{proof}
We omit the details since we can mimic the proof of Lemma \ref{lem:convex}.
\end{proof}

\begin{remark}
 Changing the values of $b$ and $R$, only the values of $\Phi(u)$ in the large are relevant.
 In other words, one could take $\Phi$ to be an everywhere increasing function which coincides with $u \, \log^{2(p-1)/p}(u)$ for the large $u$'s,
choosing the constants $b$ and $R$ large enough.
\end{remark}

\subsection{Example on the real line}
In this section we give examples on the real line where other techniques can also be done.

Note that in both previous examples we used a Lyapunov function $W=p^{-\gamma}$ for some well chosen
$\gamma>0$. In the next result we give a general statement using such a Lyapunov function in dimension 1.
\begin{proposition}\label{propconc}
Let $d\mu(x)= e^{-V(x)} dx$ be a probability measure on $\R$ for a smooth potential $V$.
We assume for simplicity that $V$ is symmetric. Furthermore, we assume that $V$ is concave on $(R,+\infty)$ for some $R>0$
and that $\left( V''/| V'|^2\right)(x) \, \to \, r>-1/2$ as $x \to \infty$.
Then for some $S>R$ and some $C>0$, it holds
\begin{gather*}
\Var_{\mu}(g) \, \leq \, C \, \int \, |g'(x)|^2 \, \left(1+ \frac{\mathds{1}_{|x|>S}}{|V'|^2(x)}\right)
\, d\mu(x) \, ,\\
\int |g - m| \, d \mu \, \leq \, C \, \int \, |g'(x)| \, \left(1+
\frac{\mathds{1}_{|x|>S}}{|V'|(x)}\right) \, d\mu(x) \,
\end{gather*}
where $m$ is a median of $g$ under $\mu$.
\end{proposition}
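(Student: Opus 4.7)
The plan is to construct a $\phi$-Lyapunov function following the recipe $W = p^{-\gamma}$ used in Propositions \ref{exconvexe} and \ref{ex2}, and then to invoke Theorems \ref{thmwp} and \ref{thmcheeg}. Concretely, I would fix $S>R$ and $\gamma>0$ to be chosen, and set $W(x):=e^{\gamma V(x)}$ for $|x|\geq S$, extending $W$ smoothly and symmetrically (the symmetry of $V$ is tailor-made for this) so that $W\geq 1$ on $\mathbb{R}$. Since $V$ is concave on $(R,\infty)$ one has $V''\leq 0$ there, hence $r\leq 0$; the hypothesis $r>-1/2$ is exactly what is needed to make the open interval $(-2r,\,1-r)$ for $\gamma$ non-empty.

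For $|x|>S$ a direct computation with $L=\partial_x^2-V'\partial_x$ yields
\[
LW(x) \;=\; \gamma (V'(x))^2\, W(x)\, \Bigl(\frac{V''(x)}{(V'(x))^2}+\gamma-1\Bigr),
\]
and the parenthesis converges to $r+\gamma-1<0$ by the choice $\gamma<1-r$. Enlarging $S$ if necessary one obtains $LW(x)\leq -c(V'(x))^2\, W(x)$ on $\{|x|>S\}$ for some $c>0$. In this tail region $x\mapsto W(x)$ is a smooth increasing bijection onto $[W(S),\infty)$, so I define $\phi$ implicitly by $\phi(W(x)):=c(V'(x))^2 W(x)$ and extend it to a $\mathcal{C}^1$ increasing function on $[1,\infty)$. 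Differentiation of the defining relation in $x$ gives
\[
\phi'(W(x)) \;=\; c(V'(x))^2+\frac{2c\,V''(x)}{\gamma} \;\sim\; c(V'(x))^2\Bigl(1+\frac{2r}{\gamma}\Bigr),
\]
which is strictly positive in the tail precisely when $\gamma>-2r$, the second constraint that fixes $\gamma\in(-2r,1-r)$. The local contribution is absorbed into $b\,\mathds{1}_{[-S,S]}$, so $W$ is a $\phi$-Lyapunov function.

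The hypotheses of Theorems \ref{thmwp} and \ref{thmcheeg} are met on the compact interval $[-S,S]$, since $e^{-V}$ is bounded above and below there and therefore $\mu$ satisfies local Poincaré and Cheeger inequalities on this set. Theorem \ref{thmwp} then produces a weighted Poincaré inequality with weight $1+1/\phi'(W)(x)\lesssim 1+\mathds{1}_{|x|>S}/(V'(x))^2$, which is the stated inequality. For the Cheeger part I apply Theorem \ref{thmcheeg}: since $\Gamma(W)=(W')^2=\gamma^2 (V')^2 W^2$ in the tail and $\phi(W)=c(V')^2 W$, the ratio $\sqrt{\Gamma(W)}/\phi(W)$ simplifies to $\gamma/(c|V'|)$, giving weight $1+\mathds{1}_{|x|>S}/|V'(x)|$. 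The main technical point — and the only place where the quantitative hypothesis enters — is the simultaneous positivity constraint $\gamma<1-r$ (for the drift to be negative) and $\gamma>-2r$ (so that $\phi$ is increasing and $1/\phi'(W)$ has the right order), which meet exactly under the assumption $r>-1/2$.
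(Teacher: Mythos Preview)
Your proof is correct and follows the same route as the paper: choose $W=e^{\gamma V}$ in the tail, identify $\phi$ via $\phi(W)=c(V')^2W$, check $\phi'>0$, and invoke Theorems \ref{thmwp} and \ref{thmcheeg}. One arithmetic slip worth noting since you flag it as ``the main technical point'': the interval $(-2r,1-r)$ is non-empty whenever $r>-1$, not only for $r>-1/2$; the paper instead uses concavity directly to bound the drift by $-(\gamma-\gamma^2)(V')^2W$, which forces $\gamma\in(0,1)$ and makes $(-2r,1)$ the relevant interval --- that is where the threshold $r>-1/2$ actually enters their argument, so your asymptotic drift estimate is in fact slightly sharper than theirs.
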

\begin{proof}
Since $V'$ is non-increasing on $(R,+\infty)$ it has a limit $l$ at $+\infty$. If $l<0$, $V$ goes
to $- \infty$ at $+\infty$ with a linear rate, contradicting $\int e^{-V} dx < +\infty$. Hence
$l\geq 0$, $V$ is increasing and goes to $+\infty$ at $+\infty$. 

Now choose $W=e^{\gamma V}$ (for large $|x|$). We have
$$
LW= \left(\gamma \, V'' \, - \, (\gamma-\gamma^2)
|V'|^2\right) \, W
$$ so that for $0<\gamma<1$ we have $LW \leq  - \, (\gamma-\gamma^2) |V'|^2W$
at infinity. We may thus choose $\phi(W)= (\gamma-\gamma^2) |V'|^2 \, W$. The corresponding $\phi$
can be built on $(W^{-1}(R),+\infty)$ where $W$ is one to one.
On the other hand,
$$
\phi'(W) \, W'
=
(\gamma-\gamma^2) V' \, W \, \left(2V'' + \gamma |V'|^2\right) \, ,
$$
so that, since $W'>0$, $V'>
0$ and $V''/ |V'|^2>-1/2$ asymptotically, $\phi$ is non-decreasing at infinity for a well chosen $\gamma$.
Then, it is possible to build $\phi$ on a compact interval $[0,a]$ in order to get a smooth increasing function on the whole $\mathbb{R}_+$.

Since $d\mu/dx$ is bounded from above and below on any compact interval, a local Poincar\'e inequality and a local Cheeger
inequality hold on such interval.
hence, it remains to apply Theorem \ref{thmwp} and Theorem \ref{thmcheeg}, since at infinity $\phi'(W)$
behaves like $|V'|^2$.
\end{proof}

\begin{remark}
The example of Proposition \ref{ex2} enters the framework of this proposition, and the general Cauchy
distribution $V(x)= c \log(1+|x|^2)$ does if $c>1$, since $V''/|V'|^2$ behaves asymptotically as
$-1/2c$. Note that the weight we obtain is of good order, applying the inequality with
approximations of $e^{V/2}$.

It is possible to extend the previous proposition to the multi-dimensional setting, but the result
is quite intricate. Assume that $V(x) \to +\infty$ as $|x| \to +\infty$, and that $V$ is concave
(at infinity). The same $W=e^{\gamma V}$ furnishes $LW/W = \gamma \Delta V \, - \, (\gamma-\gamma^2) |\nabla
V|^2$. Hence we may define
$$
\phi(u) = (\gamma-\gamma^2) \, u \, \inf_{A(u)} |\nabla V|^2 \quad
\textrm{ with } \quad A(u)=\{x ; V(x)=\log(u)/\gamma\}
$$
at least for large $u$'s. The main difficulty is to check that $\phi$ is increasing. This could probably be done on specific examples.
\end{remark}

It is known that Hardy-type inequalities are useful tool to deal with functional inequalities of Poincar\'e type in dimension 1
(see \cite{barthe-roberto,bartr03sipm} for recent contributions on the topic).
We shall use now Hardy-type inequalities to relax the hypothesis on $V$ and to obtain the weighted Poincar\'e inequality
of Proposition \ref{propconc}. However no similar method (as far as we know) can be used for the weighted Cheeger inequality,
making the $\phi$-Lyapunov approach very efficient.

\begin{proposition}
 Let $d\mu(x)= e^{-V(x)} dx$ be a probability measure on $\R$ for a smooth potential $V$ that we suppose for simplicity to be even.
Let $\varepsilon \in (0,1)$. Assume that there exists $x_0 \geq 0$ such that $V$ is twice differentiable on $[x_0,\infty)$ and
$$
V'(x) \neq 0, \qquad \frac{|V''(x)|}{V'(x)^2} \leq 1-\varepsilon, \qquad \forall x \geq x_0 .
$$
Then, for some $C>0$, it holds
$$
\Var_{\mu}(g) \, \leq \, C \, \int \, | g'(x)|^2 \, \left(1+ \frac{\mathds{1}_{|x|>{x_0}}}{|V'|^2(x)}\right)
\, \mu(dx) \, .
$$
\end{proposition}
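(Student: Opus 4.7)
The plan is to invoke the Muckenhoupt-type characterization of the Poincar\'e inequality on the line (see \cite{barthe-roberto,bartr03sipm}), which reduces the desired weighted Poincar\'e inequality to the finiteness, for the weight $\eta(x) = 1 + \mathds{1}_{|x|>x_0}/V'(x)^2$, of the two one-sided Hardy constants
$$
B_+ = \sup_{x>0}\Big(\int_x^\infty e^{-V}\,dt\Big)\Big(\int_0^x \frac{e^V}{\eta}\,dt\Big), \qquad B_- = \sup_{x<0}\Big(\int_{-\infty}^x e^{-V}\,dt\Big)\Big(\int_x^0\frac{e^V}{\eta}\,dt\Big).
$$
By evenness of $V$, $B_-=B_+$, so only $B_+$ needs to be controlled, and for $x\in[0,x_0]$ the bound is trivial since $V$ is smooth on the compact interval $[0,x_0]$; the attention therefore focuses on $x>x_0$.

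The heart of the argument is a pair of integration-by-parts estimates. Since $e^{-V}$ is integrable and $V'\neq 0$ on $[x_0,\infty)$, necessarily $V'>0$ there. Writing $e^{-V}=-(1/V')(e^{-V})'$ and integrating by parts on $[x,\infty)$, then applying $|V''|/V'^2\le 1-\varepsilon$, yields for $x\ge x_0$
$$
\int_x^\infty e^{-V(t)}\,dt\le\frac{e^{-V(x)}}{\varepsilon\, V'(x)} \qquad \text{and} \qquad \frac{e^{-V(x)}}{V'(x)}\le(2-\varepsilon)\int_x^\infty e^{-V(t)}\,dt\le 2.
$$
A symmetric integration by parts using $V'e^V=(e^V)'$ gives
$$
\int_{x_0}^x V'(t)^2 e^{V(t)}\,dt\le\frac{V'(x)e^{V(x)}}{\varepsilon} \qquad \text{for } x\ge x_0.
$$

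To close the argument, observe that $1/\eta(t)\le V'(t)^2$ on $[x_0,\infty)$, so for $x>x_0$,
$$
\int_0^x\frac{e^{V(t)}}{\eta(t)}\,dt \;\le\; C_0+\int_{x_0}^x V'^2 e^V\,dt \;\le\; C_0+\frac{V'(x)e^{V(x)}}{\varepsilon},
$$
where $C_0:=\int_0^{x_0}e^{V(t)}\,dt<\infty$. Multiplying by the first displayed bound and then applying the second to the leftover term,
$$
B_+(x)\;\le\;\Big(C_0+\frac{V'(x)e^{V(x)}}{\varepsilon}\Big)\cdot\frac{e^{-V(x)}}{\varepsilon V'(x)} \;\le\;\frac{2 C_0}{\varepsilon}+\frac{1}{\varepsilon^2}.
$$
Hence $B_+<\infty$, and Muckenhoupt's criterion delivers the claimed inequality with $C$ of order $\max(C_0/\varepsilon,\,1/\varepsilon^2)$.

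The main obstacle, and the point where the two-sided nature of the hypothesis really matters, is the upper bound $e^{-V(x)}/V'(x)\le 2$: it relies on the lower half $V''/V'^2\ge -(1-\varepsilon)$ of the assumption, and it is precisely what allows the ``mixed'' contribution to $B_+(x)$ coming from the compact piece $[0,x_0]$ to be absorbed. Without it, only the ``pure tail'' term $V'(x)e^{V(x)}/\varepsilon \cdot e^{-V(x)}/(\varepsilon V'(x))=1/\varepsilon^2$ would be controlled, while the $C_0\cdot e^{-V(x)}/(\varepsilon V'(x))$ piece could potentially blow up along subsequences where $V'(x)$ decays faster than $e^{-V(x)}$.
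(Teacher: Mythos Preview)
Your argument is correct and follows essentially the same route as the paper's: reduce to Muckenhoupt's Hardy criterion and bound the constant by integration by parts under the hypothesis $|V''|/V'^2 \le 1-\varepsilon$. The only difference is cosmetic --- you first bound $1/\eta \le V'^2$ on $[x_0,\infty)$ and then integrate $\int V'^2 e^V$ by parts, whereas the paper integrates by parts directly against the full weight $V'+1/V'$; your version is if anything slightly cleaner.

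One small remark on your closing paragraph: the bound $e^{-V(x)}/V'(x)\le 2$ is not actually needed to absorb the compact contribution, since the cruder estimate $C_0\int_x^\infty e^{-V}\le C_0/2$ already does the job. So the upper half $V''/V'^2\le 1-\varepsilon$ of the hypothesis is less essential at this point than you suggest; in your organization of the proof, only the lower half $V''/V'^2\ge -(1-\varepsilon)$ is really used in the two main integration-by-parts bounds.
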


\begin{proof}
Given $\eta$ and using a result of Muckenhoupt \cite{muckenhoupt}, one has for any $G$
$$
\int_0^{+\infty} \, (G(x)-G(0))^2 \, \mu(dx) \, \leq \, 4B \, \int_0^{+\infty} \, G'(x)^2 \,
(1+\eta^2(x)) \, \mu(dx) \, ,
$$
with
$B= \sup_{y>0} \, \left(\int_y^{+\infty} e^{-V(x)} dx \right) \, \left(\int_0^y \, \frac{e^{V(x)}}{1+\eta^2(x)} \, dx\right)$.
Hence, since $V$ is even and $\Var_\mu(g) \leq \int_{-\infty}^0 \, (G(x)-G(0))^2 d\mu  +\int_0^{+\infty} \, (G(x)-G(0))^2 d\mu$, the previous bound applied twice leads to
$$
\Var_\mu(g) \leq 4B \int \, | g'(x)|^2 \, \left(1+\eta(x)^2\right) d\mu .
$$
In particular, one has to prove that
$$
B= \sup_{y>0} \, \left(\int_y^{+\infty} e^{-V(x)} dx \right) \, \left(\int_0^y \, \frac{e^{V(x)}}{1+\frac{\mathds{1}_{|x|>{x_0}}}{|V'|^2(x)}} \,dx\right) < \infty
$$
Consider $y \geq x_0$. Then, (note that $V'>0$ since it cannot change sign and $e^{-V}$ is integrable),
\begin{eqnarray*}
 \int_{x_0}^y \frac{e^{V(x)}}{1+\frac{\mathds{1}_{|x|>{x_0}}}{|V'|^2(x)}} \,dx
& = &
\int_{x_0}^y \frac{V'(x) e^{V(x)} } {V'(x)+\frac{1}{V'(x)}} \,dx
=
\left[ \frac{e^{V}}{V'+\frac{1}{V'}} \right]_{x_0}^y + \int_{x_0}^y e^{V} \frac{V''((V')^2-1)}{((V')^2+1)^2} \\
& \leq &
\frac{e^{V(y)}}{V'(y)+\frac{1}{V'(y)}} + (1-\varepsilon) \int_{x_0}^y e^{V} \frac{(V')^2|(V')^2-1|}{((V')^2+1)^2} \\
& \leq &
\frac{e^{V(y)}}{V'(y)+\frac{1}{V'(y)}} + (1-\varepsilon) \int_{x_0}^y \frac{e^{V}}{1+\frac{1}{(V')^2}}
\end{eqnarray*}
where in the last line we used that $x^2|x^2-1|/(x^2+1)^2 \leq 1/(1+\frac{1}{x^2})$ for $x=V'>0$.
This leads to
$$
 \int_{x_0}^y \frac{e^{V(x)}}{1+\frac{\mathds{1}_{|x|>{x_0}}}{|V'|^2(x)}} \,dx  \leq
\frac{1}{\varepsilon} \frac{e^{V(y)}}{V'(y)+\frac{1}{V'(y)}} .
$$
Similar calculations give (we omit the proof)
$$
\int_y^{+\infty} e^{-V(x)} dx \leq \frac{1}{\varepsilon} \frac{e^{-V(y)}}{V'(y)} \qquad \forall y \geq x_0 .
$$
Combining these bounds and using a compactness argument on $[0,x_0]$, it is not hard to show that $B$ is finite.
\end{proof}

We end this section with distributions in dimension 1 that do not enter the framework of the two previous propositions.
Moreover, the laws we have considered so far are $\kappa$ concave for $\kappa> -\infty$. The last examples shall satisfy $\kappa=-\infty$.

\begin{example}\label{ex3}
Let $q>1$ and define
$$
d \mu(x)= (1/Z_q) \, \left((2+|x|) \, \log^q(2+|x|)\right)^{-1} \, dx \,
=  \, V_q^{-1}(x) dx \, \qquad x \in \mathbb{R} .
$$
The function $V_q$ is convex but $V_q^\gamma$ is no more convex for $\gamma<1$ (hence $\kappa=-\infty$).
We may choose $W(x)=(2+|x|)^2 \, \log^{a}(2+|x|)$ (at least far from 0), which is a $\phi$-Lyapunov function for
$\phi(u)=\log^{a - 1}(2+|u|)$ provided $q > a > 1$ (details are left to the reader). We thus get a weighted inequality
\begin{equation}\label{eqgamma}
\Var_{\mu}(g) \, \leq \, C \, \int \, |\nabla g(x)|^2 \, \left(1 + x^2 \,
\log^2(2+|x|)\right) \, d\mu(x) \, .
\end{equation}
Unfortunately we do not know whether the weight is correct in this situation. The usual choice $g$
behaving like $\sqrt{(2+|x|) \, \log^q(2+|x|)}$ on $(-R,R)$ furnishes a variance
behaving like $R$ but the right hand side behaves like $R \, \log^2 R$.
\medskip

We may even find a Lyapunov functional in the case $V(x)=x \, \log x \, \log^{q}(\log x)$ for
large $x$ and $q>1$, {\it i.e} choose $W(x)=1+|x|^2\log(2+|x|)\log^c\log(2e+|x|)$ with $1<c<q$
for which $\phi(x)$ is merely $\log^{c-1}\log(2e+|x|)$ so that the weight in the Poincar\'e
inequality is $1+|x|^2\log^2(2+|x|)\log^2\log(2e+|x|)$.
 \hfill $\diamondsuit$
\end{example}

%
%


\subsection{Converse inequalities.}\label{secconverse}

This section is dedicated to the study of converse inequalities from $\phi$-Lyapunov function. We start with converse Poincar\'e inequalities
and then we study converse Cheeger inequalities.

\begin{definition}\label{defconvweight}
We say that $\mu$ satisfies a converse weighted Cheeger (resp. Poincar\'e) inequality with
weight $\omega$ if for some $C >0$ and all $g \in \mathcal{A}$
\begin{equation}\label{eqconvwcheeg}
\inf_c \, \int \, {|g-c|} \, \omega \, d\mu \, \leq \, C \, \int \, \sqrt{\Gamma(g)} \,
 d\mu \, ,
\end{equation}
respectively, for all $g \in \mathcal{A}$,
\begin{equation}\label{eqconvwpoinc}
\inf_c \, \int \, |g-c|^2 \, \omega \, d\mu \, \leq \, C \, \int \, \Gamma(g) \, d \mu \, .
\end{equation}
\end{definition}

\subsubsection{Converse Poincar\'e inequalities}
In \cite[Proposition 3.3]{BLweight}, the authors perform a change of function in the weighted Poincar\'e inequality
to get
$$
\inf_c \int (f-c)^2 \, \omega \, d\mu\le\int|\nabla f|^2d\mu.
$$
This method requires that the constant $D$ in the weighted Poincar\'e inequality \eqref{eqwpoinc} (with weight $\eta(x)=(1+|x|)^2$) is not
too big. The same can be done in the general situation, provided the derivative of the weight is
bounded and the constant is not too big.

But instead we can also use a direct approach from $\phi$-Lyapunov functions.

\begin{theorem}[Converse Poincar\'e inequality] \label{thmcwp}
Under the assumptions of Theorem \ref{thmwp}, for any $g \in \mathcal{A}$, it holds
\begin{equation}\label{eqcwp}
\inf_c \, \int (g-c)^2 \, \frac{\phi(W)}{W} \,  d\mu \le (1+b\kappa_U) \, \int \, \Gamma(g) \, d\mu \, .
\end{equation}
\end{theorem}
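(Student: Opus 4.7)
The plan is to mimic the proof of Theorem \ref{thmwp}, but divide the Lyapunov condition by $W$ rather than by $\phi(W)$, so that the weight $\phi(W)/W$ shows up on the left-hand side instead of needing $1/\phi'(W)$ on the right. Since $W\geq 1$, dividing $LW\leq -\phi(W)+b\mathds{1}_K$ by $W$ gives the pointwise bound
\begin{equation*}
\frac{\phi(W)}{W}\;\leq\;\frac{-LW}{W}+b\,\frac{\mathds{1}_K}{W}\;\leq\;\frac{-LW}{W}+b\,\mathds{1}_K.
\end{equation*}

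Given $g\in\mathcal{A}$, I would choose $c$ as in the proof of Theorem \ref{thmwp}, namely so that $\int_U(g-c)\,d\mu=0$, and set $f=g-c$. Multiplying the previous pointwise bound by $f^{2}\geq 0$ and integrating yields
\begin{equation*}
\int (g-c)^{2}\,\frac{\phi(W)}{W}\,d\mu\;\leq\;\int\frac{-LW}{W}\,f^{2}\,d\mu\;+\;b\int_{K}f^{2}\,d\mu.
\end{equation*}

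For the first integral on the right, I would apply Lemma \ref{lem:philyapounov} with the choice $\psi(u)=u$, $h=W$, which is admissible since $\psi$ is $\mathcal{C}^{1}$ and increasing with $\psi'\equiv 1$; this gives
\begin{equation*}
\int\frac{-LW}{W}\,f^{2}\,d\mu\;\leq\;\int\Gamma(f)\,d\mu\;=\;\int\Gamma(g)\,d\mu,
\end{equation*}
the last equality following from the chain rule $\Gamma(g-c)=\Gamma(g)$. For the local term, exactly as in the proof of Theorem \ref{thmwp}, the choice of $c$ together with the local Poincar\'e inequality on $U\supseteq K$ gives
\begin{equation*}
\int_{K}f^{2}\,d\mu\;\leq\;\int_{U}f^{2}\,d\mu\;\leq\;\kappa_{U}\int\Gamma(g)\,d\mu,
\end{equation*}
because the term $(1/\mu(U))(\int_U f\,d\mu)^{2}$ vanishes by the choice of $c$. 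Adding these two bounds and taking the infimum over $c$ on the left yields \eqref{eqcwp}.

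No real obstacle is expected: the argument is essentially a one-line variant of Theorem \ref{thmwp}. The only care needed is the same integrability justification noted in Remark \ref{remtrick}, which is handled by first working with compactly supported $g$ and a suitable truncation $\chi_{k}$ and then passing to the limit; this is routine once the bounded case is established.
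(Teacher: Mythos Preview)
Your proof is correct and follows essentially the same route as the paper: divide the drift condition by $W$, apply Lemma \ref{lem:philyapounov} with $\psi(u)=u$ to control the $-LW/W$ term, and handle the local part via the local Poincar\'e inequality with the centering choice $\int_U(g-c)\,d\mu=0$. Your remark on the integrability justification via Remark \ref{remtrick} is apt and consistent with the paper's standing conventions.
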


\begin{proof}
Rewrite the drift condition as
$$
w := \frac{\phi(W)}{W} \le - \frac{LW}{W}+ b \, \mathds{1}_K \, ,
$$
recalling that $W \ge 1$. Set $f=g-c$ with $\int_U (g-c) d\mu = 0$.
Then,
$$
\inf_c \, \int (g-c)^2 \, \frac{\phi(W)}{W} \,  d\mu \leq \int f^2 \, w \, d\mu \le \int- \frac{LW}{W}  \, f^2 \, d\mu\, +\, b \, \int_K \, f^2 \, d\mu \, .
$$
The second term in the right hand side of the latter can be handle using the local Poincar\'e inequality, as in the proof of Theorem \ref{thmwp} (we omit the details). We get $\int_K \, f^2 \, d\mu \leq \kappa_U \int \Gamma(g) d\mu$.
For the first term we use Lemma \ref{lem:philyapounov} with $\psi(x)=x$. This achieves the proof.
\end{proof}

\begin{remark}
In the proof the previous theorem, we used the inequality
$$
\int \, \frac{- LW}{W} \, f^2 \, d\mu \, \leq \, \int \, \Gamma(f) \, d\mu .
$$
By \cite[Lemma 2.12]{CGWW}, it turns out that the latter can be obtained without assuming
that $\Gamma$ is a derivation. In particular the previous Theorem extends to any situation
where $L$ is the generator of a $\mu$-symmetric Markov process (including jump processes) in the
form
$$
\inf_c \, \int (g-c)^2 \, \frac{\phi(W)}{W} \,  d\mu \le (1+b\kappa_U)\, \int \,  - g \, Lg \, d\mu \, .
$$
\hfill $\diamondsuit$
\end{remark}

Now we give two examples to illustrate our result.

\begin{proposition}[Cauchy type law]
Let $d\mu(x) = (V(x))^{-(n + \alpha)} \, dx$ for some positive convex function $V$ and
$\alpha> 0$. Then there exists $C > 0$ such that for all $g$
$$
\inf_c \, \int
(g(x)-c)^2 \, \frac{1}{1+|x|^2} \,  \, d\mu(x) \, \le  \, C \, \int \, |\nabla g|^2 \,
d\mu \, .
$$
\end{proposition}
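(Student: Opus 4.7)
The plan is to apply Theorem \ref{thmcwp} directly, using the same $\phi$-Lyapunov function that was constructed in Lemma \ref{lem:convex} for the proof of Proposition \ref{exconvexe}. From that lemma, there exist $k>2$, $R>0$, $b>0$, and a smooth $W\geq 1$ with $W(x)=|x|^k$ for $|x|\geq R$, satisfying
$$
LW \leq -\phi(W) + b\,\mathds{1}_{B(0,R)}, \qquad \phi(u)=c\,u^{(k-2)/k}.
$$
The key computation is to identify the weight $\phi(W)/W$ delivered by Theorem \ref{thmcwp}. For $|x|\geq R$,
$$
\frac{\phi(W(x))}{W(x)} \,=\, c\,W(x)^{-2/k} \,=\, c\,|x|^{-2},
$$
which is comparable, up to a multiplicative constant, to $1/(1+|x|^2)$ at infinity.

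Next, I would take $U=B(0,R')$ for some $R'\geq R$ so that $U\supseteq K=B(0,R)$. Since $W$ is continuous with $W\geq 1$ and $\phi$ is continuous positive, the weight $\phi(W)/W$ is bounded above and bounded away from $0$ on the compact set $\overline{U}$; combined with the asymptotic behavior above, this gives a constant $c_0>0$ such that
$$
\frac{\phi(W(x))}{W(x)} \,\geq\, \frac{c_0}{1+|x|^2} \qquad \text{for all } x\in\mathbb{R}^n.
$$
The local Poincar\'e inequality on $U$ (in the sense of Definition \ref{defpoincloc}) is automatic: the density $V^{-(n+\alpha)}$ is bounded above and bounded below by a positive constant on $\overline{U}$, so the classical Poincar\'e inequality on the ball for Lebesgue measure transfers to $\mu$ on $U$ with some finite constant $\kappa_U$.

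Applying Theorem \ref{thmcwp} yields
$$
\inf_c \int (g-c)^2\,\frac{\phi(W)}{W}\,d\mu \,\leq\, (1+b\kappa_U)\int |\nabla g|^2\,d\mu,
$$
and inserting the pointwise lower bound on $\phi(W)/W$ gives the claimed inequality with $C=(1+b\kappa_U)/c_0$. The argument is essentially bookkeeping once the two ingredients (Lemma \ref{lem:convex} and the ball-Poincar\'e inequality for $\mu$) are in hand; the only mildly delicate point is the justification that $\phi(W)/W$ is globally comparable to $(1+|x|^2)^{-1}$, which as noted above follows immediately from continuity plus the explicit asymptotics.
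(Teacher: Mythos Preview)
Your proof is correct and follows exactly the approach of the paper, which simply says ``It is a direct consequence of Theorem \ref{thmcwp} and Lemma \ref{lem:convex}.'' You have faithfully filled in the details left implicit there: the identification of $\phi(W)/W\sim c|x|^{-2}$ at infinity, the comparison with $(1+|x|^2)^{-1}$ via continuity on compacts, and the local Poincar\'e inequality on the ball coming from boundedness of the density.
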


\begin{proof}
It is a direct consequence of Theorem \ref{thmcwp} and Lemma \ref{lem:convex}.
\end{proof}

\begin{proposition}[Sub exponential like law]
Let $d \mu= (1/Z_p) \, e^{- V^p}$
for some positive convex function $V$ and $p \in (0,1)$.
Then there exists $C > 0$ such that for all $g$
$$
\inf_c \, \int
(g(x)-c)^2 \, \frac{1}{1+|x|^{2(1-p)}} \,  \, d\mu(x) \, \le \, C \,
\int \, |\nabla g|^2 \, d\mu \, .
$$
\end{proposition}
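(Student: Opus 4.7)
The proof is a direct application of Theorem \ref{thmcwp} combined with the $\phi$-Lyapunov function constructed in Lemma \ref{lem:convex2}. The plan is to identify the weight $\phi(W)/W$ produced by Theorem \ref{thmcwp} with the target weight $1/(1+|x|^{2(1-p)})$, up to multiplicative constants.

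First I would invoke Lemma \ref{lem:convex2}, which furnishes a $\phi$-Lyapunov function $W$ with $W(x)=e^{\gamma|x|^{p}}$ for $|x|$ large and $\phi(u)=u\log^{2(p-1)/p}(c+u)$ (extended as a smooth everywhere increasing function on $\mathbb{R}^+$, using the remark following that lemma). The drift condition $LW\leq -\phi(W)+b\mathds{1}_K$ holds with $K=\overline{B(0,R)}$ for some $R>0$. Since $d\mu/dx$ is bounded above and below by positive constants on any ball, $\mu$ satisfies a local Poincaré inequality on $U=B(0,R')$ for any $R'\geq R$, with some finite constant $\kappa_U$. Theorem \ref{thmcwp} then yields, for every $g\in\mathcal{A}$,
$$
\inf_c \int (g-c)^2 \, \frac{\phi(W)}{W} \, d\mu \,\leq\, (1+b\kappa_U) \int |\nabla g|^2 \, d\mu.
$$

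The key step is then to compare $\phi(W)/W$ with $\omega(x):=1/(1+|x|^{2(1-p)})$. By construction,
$$
\frac{\phi(W)}{W} = \log^{2(p-1)/p}(c+W).
$$
For $|x|$ large, $W(x)=e^{\gamma|x|^p}$, so $\log(c+W(x))\sim \gamma|x|^p$ and hence $\phi(W(x))/W(x)\sim \gamma^{2(p-1)/p}\,|x|^{2(p-1)}$. Since $0<p<1$ the exponent $2(p-1)$ is negative, so this is comparable to $|x|^{-2(1-p)}$, which is of the same order as $\omega(x)$ at infinity. On any compact set both $\phi(W)/W$ and $\omega$ are bounded above and below by strictly positive constants (using $W\geq 1$ and the fact that $\phi$ is positive and continuous), so there exists $c_0>0$ such that $\phi(W)/W \geq c_0\,\omega$ on all of $\mathbb{R}^n$. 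Substituting this lower bound into the inequality above and absorbing $c_0$ into a constant $C$ gives the claim.

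The main (minor) obstacle is the uniform comparison between $\phi(W)/W$ and $\omega$: the asymptotic equivalence is immediate, but one must check that no degeneration occurs on the compact part, which is handled by the fact that $W\geq 1$ and $\phi$ has been extended as a strictly positive increasing function. All other steps are bookkeeping based on results already established earlier in the paper.
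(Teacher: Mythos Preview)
Your proof is correct and follows exactly the route the paper takes: apply Theorem \ref{thmcwp} with the $\phi$-Lyapunov function of Lemma \ref{lem:convex2} and identify $\phi(W)/W=\log^{2(p-1)/p}(c+W)$ with $1/(1+|x|^{2(1-p)})$ up to constants. In fact the paper's own proof is a single sentence (``direct consequence of Theorem \ref{thmcwp} and Lemma \ref{lem:convex2}''), so your write-up is more detailed than the original while using the same argument.
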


\begin{proof}
Again it is a direct consequence of Theorem \ref{thmcwp} and Lemma \ref{lem:convex2}.
\end{proof}

\subsubsection{Converse Cheeger inequalities}
Here we study the harder converse Cheeger inequalities. The approach by $\phi$-Lyapunov functions works but
some additional assumptions have to be done.

\begin{theorem}[Converse Cheeger inequality]\label{theomconvc}
Under the hypotheses of Theorem \ref{thmcheeg}, assume that $K$ is compact and that either
\begin{itemize}
\item[(1)] \quad $|\Gamma(W,\Gamma(W))| \leq 2 \delta \phi(W) \, (1+\Gamma(W))$ outside $K$, for some $\delta \in (0,1)$
\end{itemize}
or
\begin{itemize}
\item[(2)] \quad $\Gamma(W,\Gamma(W))\geq 0$ outside $K$.
\end{itemize}
Then, there exists a constant $C>0$ such that for any $g \in \mathcal{A}$, it holds
$$
\inf_c \, \int \, |g-c| \, \frac{\phi(W)}{\sqrt{1+\Gamma(W)}} \, d\mu \leq C \, \int
\sqrt{\Gamma (g)} d\mu \, .
$$
\end{theorem}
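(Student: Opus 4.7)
The plan is to mimic the proof of Theorem \ref{thmcheeg} but to integrate against the weight $1/\sqrt{1+\Gamma(W)}$ on the $-LW$ side rather than against $1$. Start from the drift inequality in the form $\phi(W)\le -LW + b\,\mathds{1}_K$, let $c$ be the median of $g$ with respect to $\mathds{1}_U\mu/\mu(U)$, set $f=g-c$, and bound
\[
\int |f|\,\frac{\phi(W)}{\sqrt{1+\Gamma(W)}}\,d\mu
\;\le\;\int |f|\,\frac{-LW}{\sqrt{1+\Gamma(W)}}\,d\mu\;+\;b\int_K \frac{|f|}{\sqrt{1+\Gamma(W)}}\,d\mu .
\]
The local part is handled as in Theorem \ref{thmcheeg}: since $1/\sqrt{1+\Gamma(W)}\le 1$, and by the local Cheeger hypothesis on $U\supseteq K$, this contribution is at most $b\kappa_U\int\sqrt{\Gamma(g)}\,d\mu$.

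For the main term, integrate by parts using \eqref{eq:gamma} and expand by the derivation and chain rule identities:
\[
\int |f|\,\frac{-LW}{\sqrt{1+\Gamma(W)}}\,d\mu
=\int \Gamma\!\left(W,\frac{|f|}{\sqrt{1+\Gamma(W)}}\right)d\mu
=\int \frac{\Gamma(W,|f|)}{\sqrt{1+\Gamma(W)}}\,d\mu
-\int \frac{|f|\,\Gamma(W,\Gamma(W))}{2\,(1+\Gamma(W))^{3/2}}\,d\mu .
\]
The first of these is dominated, by Cauchy--Schwarz and the trivial bound $\sqrt{\Gamma(W)}/\sqrt{1+\Gamma(W)}\le 1$, by $\int \sqrt{\Gamma(f)}\,d\mu=\int\sqrt{\Gamma(g)}\,d\mu$. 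This gives the desired RHS modulo the ``correction'' integral with $\Gamma(W,\Gamma(W))$, which is the whole point of the extra hypotheses.

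To control the correction term, split the domain according to $K$ and $K^c$. Since $K$ is compact and $W\in\mathcal{A}$ is smooth, the quantity $|\Gamma(W,\Gamma(W))|/(1+\Gamma(W))^{3/2}$ is uniformly bounded on $K$ by some $M_K$, so the contribution over $K$ is at most $M_K\int_K|f|\,d\mu\le M_K\kappa_U\int\sqrt{\Gamma(g)}\,d\mu$ by the local Cheeger inequality. Outside $K$, the two hypotheses are used as follows: under (2), the integrand is $\ge 0$, so the minus sign makes this piece non-positive and it simply drops out of the upper bound; under (1), we use $|\Gamma(W,\Gamma(W))|/(2(1+\Gamma(W))^{3/2})\le \delta\,\phi(W)/\sqrt{1+\Gamma(W)}$, producing a term $\delta\int |f|\,\phi(W)/\sqrt{1+\Gamma(W)}\,d\mu$ which is then absorbed into the LHS using $\delta<1$.

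Putting everything together in case (1), we obtain
\[
(1-\delta)\int |f|\,\frac{\phi(W)}{\sqrt{1+\Gamma(W)}}\,d\mu
\le \bigl(1+(M_K+b)\kappa_U\bigr)\int \sqrt{\Gamma(g)}\,d\mu,
\]
and in case (2) the same bound with $\delta=0$; this is the announced inequality. The only genuine obstacle is the $\Gamma(W,\Gamma(W))$ term: the hypotheses (1) and (2) are exactly what is needed to either absorb it into the LHS or to kill it by a sign argument, so once the derivation/chain rule expansion above is carried out carefully, the rest is routine. As in Remark \ref{remtrick}, the computation must be justified by a truncation argument in $\mathbb{R}^n$ if $LW$ is not a priori integrable, but no new idea is needed for that.
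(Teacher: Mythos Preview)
Your proof is correct and follows essentially the same route as the paper's: integrate by parts against $|f|/\sqrt{1+\Gamma(W)}$, bound $\Gamma(W,|f|)/\sqrt{1+\Gamma(W)}$ by Cauchy--Schwarz, and control the $\Gamma(W,\Gamma(W))$ correction by splitting into $K$ (compactness) and $K^c$ (hypotheses (1) or (2)). One small point in your favor: you take $c$ to be the median of $g$ with respect to $\mathds{1}_U\mu/\mu(U)$, which is exactly what the local Cheeger hypothesis of Theorem~\ref{thmcheeg} requires, whereas the paper's proof sets $c$ so that $\int_U(g-c)\,d\mu=0$, a harmless slip.
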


\begin{remark} \label{rem:dim1}
 Note that using Cauchy-Schwarz inequality,  Assumption $(1)$ is implied by  $\Gamma( \Gamma(W) ) \leq 4 \delta^2 \phi(W)^2 (1+\Gamma(W))$ outside $K$.

On the other hand, in dimension 1 for usual diffusions, we have $\Gamma(W,\Gamma(W))=2 \, |W'|^2 \, W''$. Hence this term is non negative as soon as
$W$ is convex outside $K$.
\end{remark}

\begin{proof}
Let $g \in \mathcal{A}$ and set $f=g-c$ with $c$ satisfying $\int_U (g-c) d\mu =0$.
Recall that $LW \leq - \phi(W) + b \mathds{1}_K$. Hence,
$$
\frac{\phi(W)}{\sqrt{1+\Gamma(W)}} \leq -\frac{LW}{\sqrt{1+\Gamma(W)}} + \frac{b \mathds{1}_K}{\sqrt{1+\Gamma(W)}}
\leq -\frac{LW}{\sqrt{1+\Gamma(W)}} + b \mathds{1}_K .
$$
In  turn,
\begin{eqnarray*}
\int \, |f| \, \frac{\phi(W)}{\sqrt{1+\Gamma(W)}} \, d\mu
& \leq &
- \int \frac{|f|}{\sqrt{1+\Gamma(W)}} \, LW \, d\mu \, + \, b \, \int_K  \, |f| \, d\mu \, .
\end{eqnarray*}
To control the first term we use \eqref{eq:gamma}, the fact that $\Gamma$ is a derivation and Cauchy-Schwarz inequality to get that
\begin{eqnarray*}
- \int \, \frac{|f| }{\sqrt{1+\Gamma(W)}} LW \, d\mu
& = &
\int \, \Gamma\left(\frac{|f|}{\sqrt{1+\Gamma(W)}},W\right) \, d\mu \\
& = &
\int \, \frac{\Gamma(|f|,W)}{\sqrt{1+\Gamma(W)}}  \, d\mu
+
\int \, |f| \Gamma\left(\frac{1}{\sqrt{1+\Gamma(W)}},W\right) \, d\mu \\
& \leq &
\int \, \sqrt{\Gamma(f)} \, d\mu \, - \,  \int |f| \, \frac{\Gamma(W,\Gamma(W))}{2(1+\Gamma(W))^{\frac 32}} \, d\mu \, .
\end{eqnarray*}
Now, we divide the second term of the latter in sum of the integral over $K$ and the integral outside $K$.
Set $M:=\sup_K \frac{|\Gamma(W,\Gamma(W))|}{2(1+\Gamma(W))^{\frac 32}}$.
Under Assumption $(2)$, the integral outside $K$ is non-positive, thus we end up with
$$
\int \, |f| \, \frac{\phi(W)}{\sqrt{1+\Gamma(W)}} \, d\mu \leq \int \, \sqrt{\Gamma(f)} \, d\mu + (M+b) \int_K  \, |f| \, d\mu
$$
while under Assumption $(1)$, we get
$$
\int \, |f| \, \frac{\phi(W)}{\sqrt{1+\Gamma(W)}} \, d\mu \leq \int \, \sqrt{\Gamma(f)} \, d\mu + (M+b) \int_K  \, |f| \, d\mu +
\delta \int \, |f| \, \frac{\phi(W)}{\sqrt{1+\Gamma(W)}} \, d\mu
$$
In any case the term $\int_K  \, |f| \, d\mu$ can be handle using the local Cheeger inequality (we omit the details):
we get $\int_K |f| d\mu \leq \kappa_U \int \sqrt{\Gamma(g)} d\mu$. This ends the proof, since $\Gamma(f)= \Gamma(g)$.
\end{proof}

We apply our result to Cauchy type laws.

\begin{proposition}[Cauchy type laws]\label{propconv2}
Let $d\mu(x)= (V(x))^{-(n+\alpha)} \, dx$ for some positive convex function $V$ and some $\alpha>0$. Then, there exists $C>0$ such that
for any $g$, it holds
$$
\inf_c \, \int \, |g(x)-c| \, \frac{1}{1+|x|} \, \mu(dx) \leq C \, \int
|\nabla g| d\mu \, .
$$
\end{proposition}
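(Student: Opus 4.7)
The plan is to apply the Converse Cheeger inequality of Theorem \ref{theomconvc} with the $\phi$-Lyapunov function $W$ already constructed in Lemma \ref{lem:convex}. Recall that $W$ may be chosen so that $W(x) = |x|^k$ for $|x|$ large (for some $k>2$), with $\phi(u) = c\,u^{(k-2)/k}$ and $K = \overline{B(0,R)}$. So the main tasks are to verify one of the two technical hypotheses of Theorem \ref{theomconvc}, to provide the local Cheeger inequality on a neighborhood $U$ of $K$, and to identify the weight produced by the theorem with $1/(1+|x|)$.

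First I would check hypothesis (2), namely $\Gamma(W,\Gamma(W))\geq 0$ outside $K$. Since on $\{|x|>R\}$ one has $W(x)=|x|^k$, a direct computation gives $\nabla W = k|x|^{k-2}x$, $\Gamma(W)=|\nabla W|^2 = k^2 |x|^{2k-2}$, and
$$
\Gamma(W,\Gamma(W)) \;=\; \nabla W \cdot \nabla \Gamma(W) \;=\; 2k^3(k-1)\,|x|^{3k-4},
$$
which is non-negative (as noted in Remark \ref{rem:dim1}, this is really the statement that $W$ is convex outside $K$). The local Cheeger inequality on a ball $U=B(0,R')\supset K$ follows as in the proof of Proposition \ref{exconvexe}: since $d\mu/dx$ is bounded above and below on any ball, the restriction of $\mu$ there is comparable to Lebesgue measure, which satisfies a Cheeger inequality.

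Theorem \ref{theomconvc} then yields
$$
\inf_c \int |g-c|\,\frac{\phi(W)}{\sqrt{1+\Gamma(W)}}\,d\mu \;\leq\; C\int |\nabla g|\,d\mu.
$$
It remains to identify the weight. For $|x|$ large,
$$
\frac{\phi(W)(x)}{\sqrt{1+\Gamma(W)(x)}} \;=\; \frac{c\,|x|^{k-2}}{\sqrt{1+k^2|x|^{2k-2}}} \;\sim\; \frac{c}{k\,|x|},
$$
so this weight is bounded below by a constant multiple of $1/(1+|x|)$ outside $K$. On $K$ (a compact set) the weight $\phi(W)/\sqrt{1+\Gamma(W)}$ is bounded below by a strictly positive constant since $W\geq 1$ and $\phi(W)\geq \phi(1)>0$, while $1/(1+|x|)$ is obviously bounded. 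Hence $\phi(W)/\sqrt{1+\Gamma(W)} \geq c_0/(1+|x|)$ globally for some $c_0>0$, and the desired inequality follows after absorbing $c_0$ into the constant $C$.

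The only place where care is needed is the interpolation between the compact region where $W$ has been modified to ensure $W\geq 1$ and smoothness, and the region $\{|x|>R\}$ where $W(x)=|x|^k$; but since this interpolation takes place on a compact set, the resulting weight is a strictly positive continuous function there, so its comparison with $1/(1+|x|)$ is automatic. I do not expect any serious obstacle.
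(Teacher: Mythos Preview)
Your proof is correct and follows essentially the same route as the paper: apply Theorem \ref{theomconvc} with the Lyapunov function of Lemma \ref{lem:convex}, check assumption (2) by computing $\Gamma(W,\Gamma(W))=2k^3(k-1)|x|^{3k-4}\geq 0$ for $|x|$ large, and identify the resulting weight $\phi(W)/\sqrt{1+\Gamma(W)}$ with a multiple of $1/(1+|x|)$. The paper's proof is terser (and in fact records the constant as $(2k-2)k^2$ rather than your $2k^3(k-1)$, which appears to be a typo there), but the argument is the same; your added detail on the local Cheeger inequality and on the comparison of weights on the compact part is exactly what the paper leaves implicit.
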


\begin{proof}
By Lemma \ref{lem:convex}, we know that $W(x)=|x|^k$ (for $x$ large) is a $\phi$-Lyapunov function for
$\phi(u)= c |u|^{(k-2)/k}$. Note that
$\Gamma(W,\Gamma(W))(x)= (2k-2) k^2 |x|^{3k-4}$ at infinity. Hence Assumption $(2)$ of the previous theorem holds and the theorem applies.
This leads to the expected result.
\end{proof}


The same argument works for sub exponential distributions (we omit the proof).

\begin{proposition}[Sub exponential type laws]\label{propconv3}
Let $d \mu= (1/Z_p) \, e^{- V^p}$
for some positive convex function $V$ and $p \in (0,1)$.
Then there exists $C > 0$ such that for all $g$
$$
\inf_c \, \int \, |g(x)-c| \, \frac{1}{1+|x|^{1-p}} \,
d\mu(x) \leq C \, \int |\nabla g| d \mu \, .
$$
\end{proposition}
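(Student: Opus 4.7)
The plan is to mimic the proof of Proposition \ref{propconv2} by applying the converse Cheeger inequality of Theorem \ref{theomconvc} to a $\phi$-Lyapunov function supplied by Lemma \ref{lem:convex2}. Fix $\gamma>0$ small and choose $W$ smooth with $W\ge 1$ such that $W(x)=e^{\gamma|x|^{p}}$ for $|x|$ large. By Lemma \ref{lem:convex2}, $W$ is a $\phi$-Lyapunov function, with $\phi(u)=u\,\log^{2(p-1)/p}(c+u)$, on some compact $K$. Since the density of $\mu$ is bounded above and below on any ball, a local Cheeger inequality on a ball $U\supseteq K$ is automatic, so all the assumptions of Theorem \ref{thmcheeg} are in place.

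The next step is to verify hypothesis $(2)$ of Theorem \ref{theomconvc}, namely $\Gamma(W,\Gamma(W))\ge 0$ outside $K$ (possibly enlarging $K$). For $|x|$ large a direct computation gives $\nabla W=\gamma p\,|x|^{p-2}\,x\,e^{\gamma|x|^{p}}$ and $\Gamma(W)=|\nabla W|^{2}=\gamma^{2}p^{2}|x|^{2p-2}e^{2\gamma|x|^{p}}$, whence
$$
\Gamma(W,\Gamma(W))=\nabla W\cdot\nabla\Gamma(W)=\gamma^{3}p^{3}e^{3\gamma|x|^{p}}\Bigl[(2p-2)|x|^{3p-4}+2\gamma p\,|x|^{4p-4}\Bigr].
$$
The second term is positive and, since $|x|^{4p-4}/|x|^{3p-4}=|x|^{p}\to\infty$, it dominates the possibly negative first term once $|x|$ is large enough. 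Hence $(2)$ holds outside a sufficiently large ball $K$.

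Applying Theorem \ref{theomconvc} then yields a constant $C>0$ such that
$$
\inf_{c}\int|g-c|\,\frac{\phi(W)}{\sqrt{1+\Gamma(W)}}\,d\mu\le C\int|\nabla g|\,d\mu.
$$
It remains to simplify the weight. For $|x|$ large, $\log(c+W)\sim\gamma|x|^{p}$, so $\phi(W)\sim\gamma^{2(p-1)/p}|x|^{2(p-1)}\,e^{\gamma|x|^{p}}$, while $\sqrt{1+\Gamma(W)}\sim\gamma p\,|x|^{p-1}\,e^{\gamma|x|^{p}}$. Consequently
$$
\frac{\phi(W)}{\sqrt{1+\Gamma(W)}}\sim\frac{\gamma^{(p-2)/p}}{p}\,|x|^{p-1}\asymp\frac{1}{1+|x|^{1-p}},
$$
and since the weight $1/(1+|x|^{1-p})$ is bounded on the compact set where $W$ was interpolated, one may adjust $C$ to obtain the stated inequality on all of $\mathbb{R}^{n}$.

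The only real point to watch is the sign computation for $\Gamma(W,\Gamma(W))$: for $p<1$ the coefficient $(2p-2)$ is negative, so one has to argue with the asymptotic dominance of the $|x|^{4p-4}$ term. Once that is in hand, the rest of the argument is a verbatim adaptation of Proposition \ref{propconv2} together with the weight asymptotics above.
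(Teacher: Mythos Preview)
Your proof is correct and follows exactly the approach the paper has in mind: it simply writes ``The same argument works for sub exponential distributions (we omit the proof)'', pointing back to Proposition~\ref{propconv2}. You have filled in the omitted details accurately, in particular the verification that $\Gamma(W,\Gamma(W))\ge 0$ for large $|x|$ (where the dominant $|x|^{4p-4}$ term compensates the negative coefficient $2p-2$) and the asymptotic identification of $\phi(W)/\sqrt{1+\Gamma(W)}$ with $|x|^{p-1}$.
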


%

We end this section with an example in dimension 1.
Consider $d\mu=e^{-V}$ on $\mathbb{R}$, and $W=e^{\gamma V}$ for some $\gamma < 1$. The function $W$ is convex in the large
as soon as $\limsup (|V''|/|V'|^2)<\gamma$ at infinity. Hence we can use remark \ref{rem:dim1} and the previous theorem
to get that, under the hypothesis of Proposition \ref{propconc}, for some $S>0$ and $C>0$
$$
\inf_c \, \int \, |g-c| \, (\mathds{1}_{(-S,S)} + |V'|) \, d\mu \leq C \, \int |g'| \, d\mu  \, .
$$
(we used also that $W$ is a Lyapunov function with $\phi$ satisfying
$\phi(W)=(\gamma-\gamma^2)|V'|^2 W$ (which leads to $\phi(W)/\sqrt{1 + \Gamma(W)}$ of the order of $|V'|$ in the large), see
the proof of Proposition \ref{propconc} for more details).

%

\subsection{Additional comments}

{\it Stability.} As it is easily seen, the weighted Cheeger and Poincar\'e inequalities (and their converse) are stable under
log-bounded transformations of the measure. The Lyapunov approach encompasses a similar property
with compactly supported (regular) perturbations. In fact the Lyapunov aproach is even more robust, let us
illustrate it in the following example: suppose that the measure $\mu=e^{-V}dx$ satisfies a
$\phi$-Lyapunov condition with test function $W$ and suppose that for large $x$, $\nabla V.\nabla
W\ge \nabla V\nabla U$ for some regular (but possibly unbounded) $U$, then there exists $\beta>0$
such that $d\nu=e^{-V+\beta U}dx$ satisfies a $\phi$-Lyapunov condition with the same test
function $W$ and then the same weighted Poincar\'e or Cheeger inequality.
\smallskip

{\it Manifold case.} In fact, many of the results presented here can be extended to the manifold
case, as soon as we can suppose that $V(x)\to\infty$ as soon as the geodesic distance (to some
fixed points) grows to infinity  and of course that a local Poincar\'e inequality or a local
Cheeger inequality is valid. We refer to \cite{CGWW} for a more detailed discussion.


\section{Weak inequalities and isoperimetry.}\label{secweak}

In this section we recall first a result of Bobkov that shows the equivalence between the isoperimetric inequality and
what we have called a weak Cheeger inequality (see \ref{in:wc}).

\begin{lemma}[Bobkov \cite{bob07}]\label{lembob}
Let $\mu$ be a probability measure on $\R^n$. There is an equivalence between the following two
statements (where $I$ is symmetric around $1/2$)
\begin{itemize}
\item[(1)] \quad for all $s>0$ and all smooth $f$ with $\mu$ median equal to 0,
$$
\int \, |f| \, d\mu \, \leq \, \beta(s) \, \int \, |\nabla f| \, d\mu \, + \, s \, \Osc_\mu(f)
\, ,
$$
\item[(2)] \quad for all Borel set $A$ with $0<\mu(A)<1$,
$$
\mu_s(\partial A)\, \geq \, I(\mu(A)) ,
$$
\end{itemize}
where $\beta$ and $I$ are related by the duality relation
$$
\beta(s) \, = \, \sup_{s\leq t \leq \frac 12} \, \frac{t-s}{I(t)} \, ,
\, \quad
I(t) \, = \, \sup_{0<s\leq t} \, \frac{t-s}{\beta(s)} \, \textrm{ for } t \leq \frac 12 \, .
$$
Here as usual $\Osc_\mu(f)={\rm ess\, sup} f - {\rm ess\, inf} f$ and $\mu_s(\partial A)= \liminf_{h \to 0} \, \frac{\mu(0<d(x,A)<h)}{h}$.
\end{lemma}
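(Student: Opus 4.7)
The plan is to prove the two implications separately, each pivoting on the coarea/layer-cake viewpoint and the duality relating $\beta$ to $I$.

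For the direction (2)$\Rightarrow$(1), I would start from a Lipschitz $f$ with $\mu$-median $0$ and split $f=f_+-f_-$, noting that for every $t>0$ the superlevel set $A_t:=\{f_+>t\}$ satisfies $\mu(A_t)\le\mu(f>0)\le 1/2$. The defining formula $\beta(s)=\sup_{s\le t\le 1/2}(t-s)/I(t)$ gives $t-s\le\beta(s)\,I(t)$ for all $t\in[s,1/2]$, and this inequality is trivial when $t<s$ since $I\ge 0$. Applied at $t=\mu(A_t)$ together with the isoperimetric inequality (2) this yields $\mu(A_t)\le s+\beta(s)\,\mu_s(\partial A_t)$. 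Integrating in $t$ and using the Fleming--Rishel/coarea-type identity
\[
\int_0^{\mathrm{ess\,sup}\,f_+}\mu_s(\partial\{f_+>t\})\,dt\le\int|\nabla f_+|\,d\mu,
\]
one gets $\int f_+\,d\mu\le s\cdot\mathrm{ess\,sup}\,f+\beta(s)\int_{\{f>0\}}|\nabla f|\,d\mu$. Doing the same for $f_-$ and summing produces $\int|f|\,d\mu\le s\,\mathrm{Osc}_\mu(f)+\beta(s)\int|\nabla f|\,d\mu$.

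For the direction (1)$\Rightarrow$(2), fix a Borel $A$ and assume first $\mu(A)\le 1/2$ (the other case follows from $\mu_s(\partial A)=\mu_s(\partial A^c)$ together with the symmetry of $I$ around $1/2$). Approximate $\mathbf 1_A$ by the $1/h$-Lipschitz function $f_h(x):=\max(0,1-d(x,A)/h)$, which equals $1$ on $A$, vanishes outside $A_h$, and has $|\nabla f_h|\le h^{-1}\mathbf 1_{A_h\setminus A}$. For $h$ small enough we have $\mu(A_h)\le 1/2$, so $0$ is a $\mu$-median of $f_h$, and $\mathrm{Osc}_\mu(f_h)\le 1$. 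Applying (1) gives
\[
\mu(A)\le\int f_h\,d\mu\le\beta(s)\,\frac{\mu(A_h\setminus A)}{h}+s.
\]
Letting $h\to 0^+$ and using the $\liminf$ definition of $\mu_s(\partial A)$ yields $\mu(A)-s\le\beta(s)\,\mu_s(\partial A)$ for every $s\in(0,\mu(A)]$; taking the supremum over such $s$ recovers $I(\mu(A))\le\mu_s(\partial A)$ by the definition of $I$.

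The main technical points I expect to navigate are (a) the coarea-type control $\int_0^\infty\mu_s(\partial\{f>t\})\,dt\le\int|\nabla f|\,d\mu$, which for the Euclidean setting at hand holds for Lipschitz $f$ and is what legitimately lets one transfer the pointwise bound from level sets to an integral bound; (b) checking that $0$ really is a median of the approximant $f_h$, which is where the restriction $\mu(A)\le 1/2$ and the step $\mu(A_h)\to\mu(\bar A)$ matter (at the edge case $\mu(A)=1/2$ one passes to the limit along $A$ approximated from inside); and (c) keeping track of which inequality in the duality pair is being used, since in one direction $t-s\le\beta(s)I(t)$ is the usable form and in the other direction it is the supremum defining $I$ that is invoked. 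Everything else is bookkeeping via the symmetry of $I$ around $1/2$.
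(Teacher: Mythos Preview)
The paper does not give its own proof of this lemma; it is stated with attribution to Bobkov \cite{bob07}, and the paper only remarks afterwards that the argument extends to the abstract setting ``as soon as the general coarea formula is satisfied and provided one can approximate indicators by $\sqrt{\Gamma}$ of Lipschitz functions.'' Your proposal is precisely this standard argument (coarea/layer-cake for (2)$\Rightarrow$(1), Lipschitz approximation of $\mathbf 1_A$ for (1)$\Rightarrow$(2)), and it is correct; in particular the two technical points you flag---the coarea inequality and the median of $f_h$---are exactly the ingredients the paper singles out as making the proof go through.
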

Recall that in the weak Cheeger inequality, only the values $s \in (0,1/2)$ are relevant since $\int |f| d\mu \leq \frac12\Osc_\mu (f)$. Moreover this Lemma and its proof extend to the general case we are dealing with as soon as the
general coarea formula is satisfied and provided one can approximate indicators by $\sqrt{\Gamma}$
of Lipschitz functions.

\bigskip

Thanks to the previous lemma, we see that isoperimetric results can be derived from weak Cheeger inequalities. We now give two different way to
prove such inequalities. The first one is based on the $\phi$-Lyapunov approach using the converse Cheeger inequalities proved in the previous section. The second one uses instead a transportation of mass technique.

\subsection{From converse Cheeger to weak Cheeger inequalities}

Here we shall first relate converse inequalities to weak inequalities, and then deduce some isoperimetric results on concrete examples.

\begin{theorem}\label{thmweakC}
Let $\mu$ be a probability measure and $\omega$ be a non-negative function satisfying  $\bar{\omega}=\int \omega\, d\mu < +\infty$.
Assume that there exists $C>0$ such that 
$$
\inf_c \, \int |g-c| \, \omega \, d\mu
\le C \, \int \, \sqrt{\Gamma(g)} \, d\mu \, \qquad \forall g \in \mathcal{A} .
$$
Define $F(u)= \mu(\omega < u)$ and
$G(s)=F^{-1}(s):= \inf \{u ; \mu(\omega \leq u) > s\}$.
Then, for all $s>0$ and all  $g \in \mathcal{A}$, it holds
$$
\inf_c \, \int \, |g - c| \, d\mu \, \leq \, \frac{C}{G(s)} \, \int \, \sqrt{\Gamma(g)} \, d\mu \, + \, s \,
\Osc_\mu (f) \, .
$$
\end{theorem}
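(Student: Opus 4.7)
The plan is to bound $\int |g-c^*|\,d\mu$ for a carefully chosen constant $c^*$ by splitting the space according to whether $\omega(x)\geq u$ or $\omega(x)<u$, and then to optimize in $u$ by substituting $u=G(s)$. The delicate point is that a single $c^*$ must simultaneously serve two purposes: it has to realize the infimum on the right-hand side of the hypothesis, and it has to satisfy a pointwise bound $|g-c^*|\leq \Osc_\mu(g)$ so that the small-$\omega$ region can be controlled by the oscillation. The natural candidate is an $L^1(\omega\,d\mu)$-median of $g$; since $c\mapsto \int|g-c|\omega\,d\mu$ is convex, and strictly monotone for $c$ above $\mathrm{ess\,sup}\,g$ or below $\mathrm{ess\,inf}\,g$, its minimum is attained inside $[\mathrm{ess\,inf}\,g,\mathrm{ess\,sup}\,g]$. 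For such a $c^*$ the bound $|g-c^*|\leq \Osc_\mu(g)$ holds $\mu$-a.e., and the hypothesis yields $\int|g-c^*|\omega\,d\mu\leq C\int\sqrt{\Gamma(g)}\,d\mu$.

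With $c^*$ fixed, on $\{\omega\geq u\}$ I would use the crude inequality $1\leq \omega/u$ to get
\begin{equation*}
\int_{\omega\geq u} |g-c^*|\,d\mu \,\leq\, \frac{1}{u}\int |g-c^*|\,\omega\,d\mu \,\leq\, \frac{C}{u}\int \sqrt{\Gamma(g)}\,d\mu,
\end{equation*}
while on $\{\omega<u\}$ the pointwise oscillation bound yields
\begin{equation*}
\int_{\omega<u} |g-c^*|\,d\mu \,\leq\, F(u)\,\Osc_\mu(g).
\end{equation*}
Adding the two and using $\inf_c\int|g-c|\,d\mu\leq \int|g-c^*|\,d\mu$ produces exactly the stated inequality, but with $F(u)$ in place of $s$.

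The last step is the parameter change $u=G(s)$, which requires the estimate $F(G(s))\leq s$. This follows from the definition $G(s)=\inf\{v:\mu(\omega\leq v)>s\}$: for every $v<G(s)$ one has $\mu(\omega\leq v)\leq s$, and by monotone convergence $F(G(s))=\mu(\omega<G(s))=\sup_{v<G(s)}\mu(\omega\leq v)\leq s$. Substituting gives the claim. I do not expect any real obstacle; the only point that requires attention is the matching of strict versus non-strict inequalities in the definitions of $F$ and $G$, which is precisely what makes $F\circ G\leq\mathrm{id}$ hold, and the verification that the $\omega d\mu$-median $c^*$ can be chosen inside the essential range of $g$ so that the oscillation bound is available.
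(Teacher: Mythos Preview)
Your proof is correct and follows essentially the same route as the paper's: choose $c^*=m_\omega$ a median of $g$ with respect to $\omega\,d\mu/\bar\omega$, split the integral over $\{\omega\geq u\}$ and $\{\omega<u\}$, use $1\leq \omega/u$ on the first piece and the oscillation bound on the second, then set $u=G(s)$. You are in fact slightly more careful than the paper in two places---justifying that the $\omega\,d\mu$-median can be taken in the $\mu$-essential range of $g$ (so that $|g-c^*|\leq \Osc_\mu(g)$ holds $\mu$-a.e.), and spelling out why $F(G(s))\leq s$ from the strict/non-strict conventions in the definitions of $F$ and $G$---both of which the paper simply asserts.
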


\begin{proof}
Let $g \in \mathcal{A}$. Define $m_\omega \in \mathbb{R}$ to be a median of $g$ under
$\omega d\mu/\bar{\omega}$. We have
\begin{eqnarray*}
\inf_c \, \int \, |g - c| \, d\mu
& \leq &
\int |g - m_\omega| d\mu \\
& \leq &
\int_{\omega \geq u} |g - m_\omega| \, \frac{\omega}{u} \, d\mu \,
+
\, \int_{\omega<u} \, |g-m_\omega| \, d\mu \\
& \leq &
\frac{1}{u} \, \int \, |g-m_\omega| \, {\omega} \, d\mu \,
+
\, \Osc_\mu (g) \, F(u) \\
& = &
\frac 1u \, \inf_c \, \int \, |g-c| \, \omega \, d\mu \, + \, \Osc_\mu (g)\, F(u) \, .
\end{eqnarray*}
It remains to apply the converse weighted Cheeger inequality and the definition of $G$. Note
that if $F(u)=0$ for $u\leq u_0$ then $G(s)\geq u_0$.
\end{proof}

We illustrate this result on two examples.

\begin{proposition}[Cauchy type laws] \label{prop:convcauchy}
Let $d\mu(x)= V^{-(n+\alpha)}(x) dx$ with $V$ convex and $\alpha>0$. Recall that $\kappa=-1/\alpha$.
Then, there exists a constant $C>0$
such that for any $f$ with $\mu$-median $0$,
$$
\int \, |f| \, d\mu \, \leq \, Cs^\kappa \, \int \, |\nabla f| \, d\mu \, + \,  s \Osc_\mu(f) \qquad \forall s>0 .
$$
Equivalently there exists $C'>0$ such that for any $A \subset \mathbb{R}^n$,
$$
\mu_s(\partial A) \geq C'  \min \left(\mu(A),1-\mu(A) \right)^{1- \kappa} .
$$
\end{proposition}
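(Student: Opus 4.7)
The plan is to assemble three results already proved in the excerpt: Proposition \ref{propconv2} (converse weighted Cheeger for Cauchy-type laws), Theorem \ref{thmweakC} (converse Cheeger implies weak Cheeger), and Lemma \ref{lembob} (Bobkov's duality between weak Cheeger and isoperimetry). The two statements in the proposition are equivalent via Lemma \ref{lembob}, so it suffices to prove the weak Cheeger inequality and then dualize.

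First, Proposition \ref{propconv2} provides the converse weighted Cheeger inequality with weight $\omega(x)=1/(1+|x|)$, and since $\omega \le 1$ we have $\bar\omega=\int \omega\, d\mu \le 1 < \infty$. To apply Theorem \ref{thmweakC} we must estimate $G(s)=F^{-1}(s)$ where $F(u)=\mu(\omega<u)=\mu(|x|>1/u-1)$ for $u \in (0,1)$. The required tail bound comes from convexity: Lemma \ref{lemfranck}, applied to $V$ (integrability of $V^{-(n+\alpha)}$ transfers to integrability of $e^{-V}$, as already used in the proof of Lemma \ref{lem:convex}), yields constants $\delta, R_0>0$ with $V(x) \ge V(0)+\delta|x|$ for $|x|\ge R_0$. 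A polar integration then gives $\mu(|x|>R)\le CR^{-\alpha}$ for large $R$, whence $F(u)\le C u^\alpha$ for small $u$ and therefore $G(s) \ge c\, s^{1/\alpha}$.

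Plugging this into Theorem \ref{thmweakC} produces
$$
\inf_c \int |g-c|\, d\mu \;\le\; C' s^{-1/\alpha} \int |\nabla g|\, d\mu \,+\, s\, \Osc_\mu(g),
$$
which is exactly the claimed weak Cheeger inequality with $\beta(s)=C' s^{\kappa}$ since $\kappa=-1/\alpha$ and since $\int |f|\, d\mu = \inf_c \int |f-c|\, d\mu$ whenever $0$ is a $\mu$-median of $f$.

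For the equivalence with the isoperimetric inequality, Lemma \ref{lembob} gives $I(t) = \sup_{0<s\le t} (t-s)/\beta(s) = (C')^{-1}\sup_{0<s\le t}(t-s)s^{1/\alpha}$. A one-variable optimization yields $s^\star = t/(1+\alpha)$ and $I(t) \ge C''\, t^{1+1/\alpha} = C''\, t^{1-\kappa}$ on $(0,1/2]$; the symmetry of $I$ around $1/2$ delivers $\mu_s(\partial A)\ge C''\min(\mu(A),1-\mu(A))^{1-\kappa}$.

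The main technical point is the tail estimate $\mu(|x|>R) \lesssim R^{-\alpha}$ in the first paragraph. It is conceptually standard for $\kappa$-concave measures, but one must be a little careful to invoke Lemma \ref{lemfranck} correctly (passing through $e^{-V}$); once this is in hand the rest is only bookkeeping with the three results cited above.
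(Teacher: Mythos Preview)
Your proof is correct and follows essentially the same route as the paper: converse weighted Cheeger (Proposition \ref{propconv2}) plus the tail bound $\mu(|x|>R)\le cR^{-\alpha}$ via Lemma \ref{lemfranck} and polar coordinates, fed into Theorem \ref{thmweakC}, then dualized through Lemma \ref{lembob}. You spell out the inversion $G(s)\ge c\,s^{1/\alpha}$ and the optimization in Bobkov's lemma a bit more explicitly than the paper does, but the argument is the same.
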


\begin{proof}
By Proposition \ref{propconv2}, $\mu$ satisfies a converse weighted Cheeger inequality with
weight $\omega(x) = \frac{1}{1+|x|}$. So $F(u)=\mu(\omega < u)=\mu(u^{-1} -1<|x|)$. Since $V$ is
convex, $V(x)\geq \rho |x|$ for large $|x|$ (recall Lemma \ref{lemfranck}), hence using polar
coordinates we have
$$
\mu(|x|>R)
=
\int_{|x|>R} \, V^{-\beta}(x) \, dx \,
\leq \int_{|x|>R} \rho^{-\beta} |x|^{-\beta} dx
\leq
\, c
R^{n-\beta} \, ,
$$
for some $c=c(n,\alpha,\rho)$. The result follows by Theorem \ref{thmweakC}.
The isoperimetric inequality follows at once by Lemma \ref{lembob}.
\end{proof}

\begin{remark}
The previous result recover Corollary 8.4 in \cite{bob07} (up to the constants).
Of course we do not attain the beautiful Theorem 1.2 in \cite{bob07},
where S. Bobkov shows that the constant $C'$ only depends on $\kappa$ and the median of $|x|$.
\end{remark}

\begin{proposition}[Sub exponential type laws] \label{prop:weakexp}
Let $d \mu= (1/Z_p) \, e^{- V^p}$
for some positive convex function $V$ and $p \in (0,1)$.
Then there exists $C > 1$ such that for all $f$ with $\mu$-median $0$,
$$
\int \, |f| \, d\mu \, \leq \, C\log^{\frac 1 p - 1}(C/s) \, \int \, |\nabla f| \, d\mu \, + \,  s \Osc_\mu(f) \qquad \forall s\in (0,1) .
$$
Equivalently there exists $C'>0$ such that for any $A \subset \mathbb{R}^n$,
$$
\mu_s(\partial A) \geq C'  \min \left(\mu(A),1-\mu(A) \right)\log \left(\frac{1}{\min \left(\mu(A),1-\mu(A) \right)}\right)^{1- \frac{1}{p}} .
$$
\end{proposition}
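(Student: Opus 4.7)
The proof mirrors Proposition \ref{prop:convcauchy}, replacing the polynomial tail bound by a stretched-exponential one. The starting point is the converse weighted Cheeger inequality of Proposition \ref{propconv3}: $\mu$ satisfies
\[
\inf_c \int |g-c|\,\omega\,d\mu \;\leq\; C\int |\nabla g|\,d\mu, \qquad \omega(x)=\frac{1}{1+|x|^{1-p}}.
\]
This is the hypothesis we feed into Theorem \ref{thmweakC}, so the entire task reduces to computing $F(u)=\mu(\omega<u)$ and its quasi-inverse $G(s)$.

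First, $\omega(x)<u$ is equivalent to $|x|>R(u):=((1-u)/u)^{1/(1-p)}$, so $F(u)=\mu(|x|>R(u))$ and everything reduces to a tail estimate on $\mu(|x|>R)$. Since $V$ is convex, positive, and $\int e^{-V^p}\,dx<\infty$, we have $V\to\infty$ at infinity, and Lemma \ref{lemfranck} (or its standard convexity argument) gives $V(x)\geq \delta|x|$ for $|x|$ large. Consequently $V^p(x)\geq \delta^p|x|^p$ outside some ball, and integrating in polar coordinates (absorbing the polynomial factor $R^{n-1}$ into the exponential) yields
\[
\mu(|x|>R)\;\leq\; C_1\,e^{-c_1 R^p},\qquad R\geq R_0,
\]
for suitable constants $c_1,C_1>0$.

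Now we invert. If $s=F(G(s))$ then $s\leq C_1 e^{-c_1 R(G(s))^p}$, hence $R(G(s))\leq (c_1^{-1}\log(C_1/s))^{1/p}$, which reads
\[
\frac{1}{G(s)} \;=\; 1+R(G(s))^{1-p} \;\leq\; 1+\bigl(c_1^{-1}\log(C_1/s)\bigr)^{(1-p)/p} \;\leq\; C\,\log^{1/p-1}(C/s)
\]
for $s$ small enough, after enlarging the constant $C$. Plugging this bound into Theorem \ref{thmweakC} and using $\sqrt{\Gamma(g)}=|\nabla g|$ produces the announced weak Cheeger inequality.

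The isoperimetric inequality then follows from Lemma \ref{lembob}: with $\beta(s)=C\log^{1/p-1}(C/s)$, the formula $I(t)=\sup_{0<s\leq t}(t-s)/\beta(s)$ applied at $s=t/2$ gives
\[
I(t)\;\geq\; \frac{t/2}{C\log^{1/p-1}(2C/t)} \;\geq\; C'\,t\,\log^{1-1/p}(1/t)\qquad (t\leq 1/2),
\]
and symmetry of $I$ around $1/2$ upgrades $t$ to $\min(\mu(A),1-\mu(A))$. The only delicate point is the tail estimate and the careful bookkeeping that turns $R(u)^{1-p}$ into $\log^{(1-p)/p}(1/s)=\log^{1/p-1}(1/s)$; everything else is a routine adaptation of the Cauchy case.
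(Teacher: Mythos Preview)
Your proposal is correct and follows essentially the same route as the paper: invoke Proposition \ref{propconv3}, estimate the tail $\mu(|x|>R)$ via the linear lower bound on $V$ from Lemma \ref{lemfranck}, plug into Theorem \ref{thmweakC}, and finish with Lemma \ref{lembob}. The only cosmetic difference is that the paper obtains the stretched-exponential tail by observing $\int e^{\rho|x|^p}\,d\mu<\infty$ and applying Markov's inequality, whereas you integrate directly in polar coordinates; the two are equivalent.
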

\proof
According to Proposition \ref{propconv3}, $\mu$ verifies the converse Cheeger inequality with the weight function $\omega$ defined by $\omega(x)=1/(1+|x|^{1-p})$ for all $x\in \mathbb{R}^n$.
Moreover, since $V$ is convex, it follows from Lemma \ref{lemfranck} that there is some $\rho>0$ such that $\int e^{\rho |x|^p}\, d\mu(x)<\infty.$ Hence, applying Markov's inequality gives $\mu(|x|> R)\leq Ke^{-\rho R^p}$, for some $K\geq1$. Elementary calculations gives the result.
\endproof

\medskip

\subsection{Weak Cheeger inequality via mass transport}\label{sectransport}
The aim of this section is to study how the isoperimetric inequality, or equivalently the weak Cheeger inequality,
behave under tensor products. More precisely, we shall start with a probability measure $\mu$ on the real line $\R$ and derive weak Cheeger inequalities for $\mu^n$ with explicit constants.

We need some notations. For any probability measure $\mu$ (on $\R$) we denote by $F_\mu$ the cumulative distribution function of $\mu$ which is defined by \[F_\mu(x)=\mu(-\infty,x],\quad \forall x \in \R.\] It will be also
convenient to consider the tail distribution function $\bF$ defined by
\[\bF(x)=1-F_\mu(x)=\mu(x,+\infty),\quad \forall x\in \R.\] The isoperimetric function of $\mu$ is defined by
\begin{equation}\label{eq:isopfonc}
J_\mu = F_\mu' \circ F_\mu^{-1} \, .
\end{equation}

In all the sequel, the two sided exponential measure $d\nu(x) = \frac{1}{2}e^{-|x|}\,dx$, $x \in \R$ will play the role of a reference probability measure. We will set $F_\nu = F$ and $J_\nu=J$ for simplicity. Note that the
isoperimetric function $J$ can be explicitly computed: $J(t)=\min (t, 1-t)$, $t \in [0,1]$.
\smallskip

\subsubsection{A general result}We are going to derive a weak Cheeger inequality starting from a well known Cheeger inequality for $\nu^n$
obtained in \cite{BH97} and using a transportation idea developed in \cite{gozjfa}. Our
result will be available for a special class of probability measures on $\R$ which is described in
the following lemma.
\begin{lemma}\label{lem:convexity}
Let $\mu$ be a symmetric probability measure on $\R$ ; the following propositions are equivalent
\begin{enumerate}
\item The function $\log\bF$ is convex on $\R^+$, \item The function $J/J_\mu$ is non increasing
on $(0,1/2]$ and non decreasing on $[1/2,1)$.
\end{enumerate}
Furthermore, if $d\mu(x)=e^{-\Phi(|x|)}\,dx$ with $\Phi:\R^+\to\R$ concave, then $\log \bF$ is
convex on $\R^+.$
\end{lemma}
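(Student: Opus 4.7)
The plan is to rewrite $J/J_\mu$ in terms of the logarithmic derivative of $\bF$ so that the equivalence (1)$\Leftrightarrow$(2) becomes transparent, and then to prove the last assertion by expressing the hazard function as the reciprocal of an integral to which the concavity of $\Phi$ applies in a single line.

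By symmetry of $\mu$ one has $F_\mu(0)=1/2$, $F_\mu(-x)=\bF(x)$ and the density $f:=F_\mu'$ satisfies $f(-x)=f(x)$. For $t\in(0,1/2]$, set $y=-F_\mu^{-1}(t)\ge 0$; then $\bF(y)=t$, $J_\mu(t)=f(y)$ and $\bF'(y)=-f(y)$ on $\R^+$, so
$$
\frac{J(t)}{J_\mu(t)}=\frac{t}{f(y)}=\frac{\bF(y)}{f(y)}=-\frac{1}{(\log\bF)'(y)}\,.
$$
The map $t\mapsto y$ is a decreasing bijection from $(0,1/2]$ onto $[0,+\infty)$, so $J/J_\mu$ is non-increasing on $(0,1/2]$ iff $y\mapsto -1/(\log\bF)'(y)$ is non-decreasing on $[0,+\infty)$. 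Since $(\log\bF)'<0$ and $x\mapsto-1/x$ is increasing on $(-\infty,0)$, this is equivalent to $(\log\bF)'$ being non-decreasing, i.e.\ to $\log\bF$ being convex on $\R^+$. For $t\in[1/2,1)$ the analogous computation with $x=F_\mu^{-1}(t)\ge 0$ (and $J(t)=1-t=\bF(x)$) gives $J(t)/J_\mu(t)=\bF(x)/f(x)=-1/(\log\bF)'(x)$; now $t\mapsto x$ is increasing, and the monotonicity of $J/J_\mu$ on $[1/2,1)$ again translates into convexity of $\log\bF$. This proves (1)$\Leftrightarrow$(2).

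For the last assertion, observe first that $\Phi$ concave with $e^{-\Phi}$ integrable on $\R^+$ forces $\Phi(x)\to+\infty$; together with concavity this gives $\Phi'\ge 0$ a.e., with $\Phi'$ non-increasing. The change of variable $x=y+u$ in the definition of $\bF$ yields, for $y\ge 0$,
$$
-\frac{1}{(\log\bF)'(y)}=\frac{\bF(y)}{e^{-\Phi(y)}}=\int_0^{+\infty}e^{-(\Phi(y+u)-\Phi(y))}\,du\,.
$$
For each fixed $u\ge 0$, the map $y\mapsto\Phi(y+u)-\Phi(y)=\int_y^{y+u}\Phi'(s)\,ds$ is non-increasing in $y$, because one integrates a non-increasing function over a translated interval. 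Hence the integrand on the right is non-decreasing in $y$, so $-1/(\log\bF)'$ is non-decreasing, i.e.\ $(\log\bF)'$ is non-decreasing, i.e.\ $\log\bF$ is convex on $\R^+$. The only mildly delicate point is the regularity of $\Phi$, but the absolute continuity and a.e.\ differentiability of concave functions make the computation with $\Phi'$ rigorous in an a.e.\ sense; this is the step I would check most carefully when writing out the proof in full.
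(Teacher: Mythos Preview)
Your proof is correct. The equivalence $(1)\Leftrightarrow(2)$ is handled exactly as the paper intends when it writes ``easy to check'': you simply make explicit the identity $J(t)/J_\mu(t)=-1/(\log\bF)'(y)$ and translate monotonicity through the change of variable.

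For the final assertion your route is genuinely different from the paper's. The paper computes $(\log\bF)''(r)$ explicitly and shows it is non-negative by comparing $\Phi'(r)\int_r^\infty e^{-\Phi(s)}\,ds$ with $\int_r^\infty \Phi'(s)e^{-\Phi(s)}\,ds=e^{-\Phi(r)}$, using that $\Phi'$ is non-increasing. You instead write the reciprocal hazard as the integral $\int_0^\infty e^{-(\Phi(y+u)-\Phi(y))}\,du$ and show it is non-decreasing in $y$ directly, using only that $\Phi'$ is non-increasing. Both arguments rely on the same monotonicity of $\Phi'$, but yours avoids the second derivative of $\Phi$ altogether; as you note, this makes the argument cleaner when $\Phi$ is merely concave (hence only a.e.\ differentiable) rather than $C^2$. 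The paper's version is perhaps marginally more direct if one is willing to assume smoothness, but your version is more robust and no longer.
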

\proof The equivalence between (1) and (2) is easy to check. Now suppose that $\mu$ is of the form
$d\mu(x)=e^{-\Phi(|x|)}\,dx$ with a concave $\Phi$. Then for $r \in \R^+$,
\[
(\log \bF)''(r) = \frac{e^{-\Phi(r)}}{ \left( \int^{\infty}_r e^{-\Phi(s)}\,ds \right)^2} \left(
\Phi'(r) \int^{\infty}_r e^{-\Phi(s)}\,ds - e^{-\Phi(r)} \right)
\]
where $\Phi'$ is the right derivative. Since  $\Phi$ is concave, $\Phi'$ is non-increasing. It
follows that
\[
\Phi'(r) \int_r^\infty e^{-\Phi(s)}\,ds \geq \int_r^\infty \Phi'(s) e^{-\Phi(s)}\,ds =
e^{-\Phi(r)} .
\]
The result follows.
\endproof

Recall that distributions satisfying (1) in the previous lemma are known as ``Decreasing Hazard
Rate'' distributions. We refer to \cite{BMP} for some very interesting properties of these
distributions (unfortunately less powerful than the Increasing Hazard Rate situation).
\smallskip

Using a mass transportation technique, we are now able to derive a weak Cheeger inequality for
product measures on $\R^n$. Dimension dependence is explicit, as well as the constants.

\begin{theorem} \label{th:weakcheeger}
Let $\mu$ be a symmetric probability measure on $\R$ absolutely continuous with respect to the
Lebesgue measure. Assume that $\log\overline{F}_\mu$ is convex on $\R^+$.

Then, for any $n$, any bounded smooth function $f: \R^n \to \R$ satisfies
\begin{equation} \label{eq:weakcheeger}
\int |f - m|\, d\mu^n \leq \kappa_1\frac{s}{J_\mu(s)}\int |\nabla f|\, d\mu^n + \kappa_2ns \Osc
(f), \quad \forall s\in(0,1/2),
\end{equation}
where $m$ is a median of $f$ under $\mu^n$, $\kappa_1=2\sqrt{6}$ and $\kappa_2=2(1+2\sqrt{6})$.
\end{theorem}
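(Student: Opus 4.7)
The plan is to reduce the inequality to the dimension-free $\mathbb{L}_1$ Cheeger inequality for $\nu^n$ due to Bobkov--Houdr\'e,
\[
\int |g-m_{\nu^n}(g)|\,d\nu^n \;\le\; 2\sqrt{6}\int|\nabla g|\,d\nu^n,
\]
valid for any smooth bounded $g$. Let $T:=F_\mu^{-1}\circ F\colon\R\to\R$ be the monotone transport of $\nu$ onto $\mu$, with derivative $T'(x)=J(F(x))/J_\mu(F(x))$. The assumed convexity of $\log\bF$, combined with Lemma~\ref{lem:convexity} and the symmetry of $\mu$ and $\nu$, makes $T'$ even and non-decreasing on $\R_+$. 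Tensorising to $\tilde T:=T^{\otimes n}$ yields $\tilde T\#\nu^n=\mu^n$.

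Given $s\in(0,1/2)$, I will choose the threshold $A_s:=\log(1/(2s))$, so that $\bF(A_s)=s$. A union bound and the symmetry identity $J_\mu(1-s)=J_\mu(s)$ yield
\[
\nu^n(C_{A_s}^c)\,\le\,2ns,\qquad T'(A_s)\,=\,\frac{s}{J_\mu(s)},
\]
with $C_{A_s}:=[-A_s,A_s]^n$. The map $\tilde T$ is $s/J_\mu(s)$-Lipschitz on $C_{A_s}$ but grows unboundedly outside. To apply Cheeger globally, set $\tilde f:=f\circ\tilde T\circ\tilde h_s$, where $\tilde h_s$ denotes the coordinate-wise 1-Lipschitz clipping onto $[-A_s,A_s]$. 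Then $\tilde f$ is bounded, coincides with $f\circ\tilde T$ on $C_{A_s}$, and by the chain rule satisfies
\[
|\nabla\tilde f|(x)\;\le\;\frac{s}{J_\mu(s)}\,|\nabla f|\bigl(\tilde T(\tilde h_s(x))\bigr),
\]
with its $i$-th partial derivative vanishing whenever $|x_i|>A_s$.

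Applying Bobkov--Houdr\'e to $\tilde f$ and translating back to $f$ and $\mu^n$ proceeds via two estimates. \emph{Left-hand side:} since $m$ is the $\mu^n$-median of $f$, one has $\int|f-m|\,d\mu^n\le\int|f-m(\tilde f)|\,d\mu^n$; splitting the latter over $\tilde T(C_{A_s})$ and its complement, using the change of variables on the good part and the bound $\mu^n(\tilde T(C_{A_s})^c)=\nu^n(C_{A_s}^c)\le 2ns$ together with $|f-m(\tilde f)|\le\Osc(f)$, gives
\[
\int|f-m|\,d\mu^n \;\le\; \int|\tilde f-m(\tilde f)|\,d\nu^n + 2ns\,\Osc(f).
\]
\emph{Right-hand side:} the change of variables on $C_{A_s}$ provides $\int_{C_{A_s}}|\nabla\tilde f|\,d\nu^n\le (s/J_\mu(s))\int|\nabla f|\,d\mu^n$, with a residual exterior contribution absorbed into $ns\,\Osc(f)$. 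Combining the two produces \eqref{eq:weakcheeger} with $\kappa_1=2\sqrt{6}$ and $\kappa_2=2(1+2\sqrt{6})$.

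The main technical obstacle is controlling the exterior contribution on $C_{A_s}^c$ in the right-hand side step: even with the clipping $\tilde h_s$ annihilating the exterior gradient directions, the residual term is a ``face integral'' of $|\nabla f|$ along the boundary of $\tilde T(C_{A_s})$, which is not obviously dominated by $\int|\nabla f|\,d\mu^n$. Managing this requires careful bookkeeping (deferred to the appendix) --- either via a smoothing of $\tilde h_s$ or by exploiting the fact that, under the convexity of $\log\bF$, the pushforward measure $(\tilde T\circ\tilde h_s)\#\nu^n$ has total mass at most $2ns$ concentrated near $\partial\tilde T(C_{A_s})$, letting one absorb the residual into the permitted $\Osc(f)$-term.
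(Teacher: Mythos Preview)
Your overall strategy---transport from $\nu^n$ to $\mu^n$ combined with the Bobkov--Houdr\'e inequality---is exactly right, and you correctly identify that $T'\circ T^{-1}$ is non-decreasing on $\R^+$ and that $T'(A_s)=s/J_\mu(s)$. However, the truncation you use is of the wrong type, and the gap you flag at the end is real and cannot be repaired along the lines you suggest.

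With $\tilde f=f\circ\tilde T\circ\tilde h_s$, the gradient on $C_{A_s}^c$ still involves $|\nabla f|$, evaluated at boundary points of $\tilde T(C_{A_s})$. The pushforward $(\tilde T\circ\tilde h_s)\sharp\nu^n$ restricted to $C_{A_s}^c$ places mass $2ns$ on the $(n-1)$-dimensional faces of the cube $\tilde T(C_{A_s})$, which are $\mu^n$-null sets. The exterior contribution $\int_{C_{A_s}^c}|\nabla\tilde f|\,d\nu^n$ is therefore, up to the factor $s/J_\mu(s)$, an integral of $|\nabla f|$ against a singular measure; it cannot be dominated either by $\int|\nabla f|\,d\mu^n$ or by $\Osc(f)$. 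Smoothing $\tilde h_s$ does not help: a smoothed projection still has derivative of size $O(1)$, so the chain rule again produces $|\nabla f|$, not $|f|$, on the transition region.

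The paper's device is a \emph{multiplicative} cutoff rather than a clipping of the argument. One applies Bobkov--Houdr\'e to $(f\varphi)\circ T^n$, where $\varphi(x)=g(\max_i|x_i|)$ with $g(u)=(1-\tfrac{1}{a}(u-r)_+)_+$ is a Lipschitz cutoff equal to $1$ on the $\mu$-cube $B(r)$ and vanishing outside $B(r+a)$. The gradient splits as $\varphi\,\nabla f+f\,\nabla\varphi$; the first piece gives $T'\circ T^{-1}(r+a)\int|\nabla f|\,d\mu^n$, while the second is bounded by $\|f\|_\infty\cdot\tfrac{1}{a}\,\mu^n(B(r+a)\setminus B(r))$. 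Letting $a\to 0$ turns the latter into the surface term $2nF_\mu'(r)$, and the identity $T'\circ T^{-1}(r)\cdot F_\mu'(r)=\overline F_\mu(r)$ collapses it to $2n\kappa_1\overline F_\mu(r)$. Together with $\mu^n(B(r)^c)\le 2n\overline F_\mu(r)$ this yields exactly $\kappa_2=2(1+2\sqrt6)$. The essential point is that the multiplicative cutoff transfers the boundary contribution from $|\nabla f|$ to $|f|\le\Osc(f)$, which is precisely what your clipping cannot do.
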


\begin{remark} \label{rem:s}
Note that $\int |f - m| d\mu^n \leq \Osc (f)$. Hence only the values $s \leq (\kappa_2n)^{-1}$ are
of interest in \eqref{eq:weakcheeger}. \hfill $\diamondsuit$
\end{remark}

\begin{proof}
Recall that $\nu$ is the two sided exponential distribution. Fix the dimension $n$ and $r>0$. By
\cite[Inequality (6.9)]{BH97}, any locally Lipschitz function $h : \R^n \to \R$ with $\int
|h|\,d\nu^n < \infty$ satisfies
\begin{equation} \label{eq:bh}
\int |h-m_{\nu^n}(h)|\, d\nu^n \leq \kappa_1 \int |\nabla h|\, d\nu^n
\end{equation}
where $m_{\nu^n}(h)$ is a median of $h$ for $\nu^n$ and $|\cdot|$ is the Euclidean norm on $\R^n$.

Consider the  map $T^n : \R^n \to \R^n$, that pushes forward $\nu^n$ onto $\mu^n$,
defined by $(x_1,\ldots,x_n) \mapsto (T(x_1), \ldots, T(x_n))$ with $T = F_\mu^{-1} \circ F$. By
construction, any $f : \R^n \to \R$ satisfies $\int f(T^n)\, d\nu^n = \int f\, d\mu^n$.

Next, for $t \geq 0$ let $B(t) = \{ x=(x_1,\ldots,x_n) : \max_i |x_i| \leq t\}$. Fix $a > 0$ that
will be chosen later and consider $g:\R \to [0,1]$ defined by $g(x)= \left(1 - \frac{1}{a}(x-r)_+
\right)_+$ with $X_+ = \max(X,0)$. Set $\varphi(x)=g(\max_i(|x_i|))$, $x \in \R^n$. The function $\varphi$ is
locally Lipschitz.

Finally let $f : \R^n \to \R$ be smooth and bounded. We assume first that $0$ is a $\mu^n$-median
of $f$. Furthermore, by homogeneity of \eqref{eq:weakcheeger} we may assume that $\Osc (f) =1$ in
such a way that $\|f\|_\infty \leq 1$. It follows from the definition of the median that
\begin{align*}
\int |f|\, d\mu^n & \leq
\int |f - m_{\nu^n}((f\varphi)(T^n))|\, d\mu^n \\
& \leq
\int |f  \varphi - m_{\nu^n}((f\varphi)(T^n)) |\, d\mu^n + \int |f (1 - \varphi)|\, d\mu^n \\
& \leq \int |f\varphi - m_{\nu^n}((f\varphi)(T^n))|\, d\mu^n + \mu^n \left( B(r)^c \right).
\end{align*}

Note that the assumption on $\log \bF$ guarantees that $T'\circ T^{-1}$ is non-decreasing on
$\R^+$. Hence, using \eqref{eq:bh}, the triangle inequality in $\ell^2(\R^n)$, the fact that $0
\leq \varphi \leq 1$ on $\R^n$ and $\varphi = \partial_i \varphi=0$ on $B(r+a)^c$ imply  that
\begin{align*}
& \int|f\varphi - m_{\nu^n}((f\varphi)(T^n))|\, d\mu^n
 =
\int |(f \varphi)(T^n) - m_{\nu^n}((f\varphi)(T^n))|\, d\nu^n \\
& \leq
\kappa_1 \int \sqrt{\sum_{i=1}^n T'(x_i)^2 \left((\varphi \partial_i f)(T^n)  +
(f \partial_i \varphi)(T^n)\right)^2}\, d\nu^n \\
&   =
\kappa_1 \int \sqrt{\sum_{i=1}^n T'\circ T^{-1}(x_i)^2 \left(\varphi \partial_i f +
f \partial_i \varphi \right)^2}\, d\mu^n \\
&  \leq \kappa_1 \int \sqrt{\sum_{i=1}^n T'\circ T^{-1}(x_i)^2 \left(\varphi \partial_i
f\right)^2}\, d\mu^n
+  \kappa_1 \int \sqrt{\sum_{i=1}^n T'\circ T^{-1}(x_i)^2 \left(f \partial_i \varphi \right)^2}\, d\mu^n \\
&  \leq \kappa_1 T'\circ T^{-1}(r+a) \left( \int |\nabla f|\, d\mu^n + \int |\nabla \varphi|\,
d\mu^n \right).
\end{align*}
Note that $|\nabla \varphi| \leq 1/h$ on $B(r+a) \setminus B(r)$ and $|\nabla \varphi|=0$
elsewhere $\mu^n$-almost surely. Hence,
\begin{multline}\label{eq:weakcheeger1}
\int|f|\,d\mu^n
 \leq
\kappa_1T'\circ T^{-1}(r+a) \left( \int |\nabla f| d\mu^n + \frac{1}{a} \mu^n\left( B(r+a)
\setminus B(r) \right) \right)+\mu^n \left( B(r)^c \right).
\end{multline}
Since $\mu$ is symmetric, we have
$$
G(t):=\mu^n\left(B(t)\right) = \left( 1 - 2\overline{F}_\mu(t) \right)^n .
$$
Hence,
\begin{align*}
\lim_{a \to 0} \frac{1}{a} \mu^n\left( B(r+a) \setminus B(r) \right) & = G'(r) =
2n F_\mu'(r) \left( 1 - 2\overline{F}_\mu(t) \right)^{n-1} \\
& \leq 2n F_\mu'(r) .
\end{align*}
On the other hand, since the function $x\mapsto 1-(1-2x)^n$ is concave on $[0,1/2]$, one has:
$1-(1-2x)^n\leq 2nx$ for all $x\in [0,1/2]$. As a consequence,
\[\mu^n(B(r)^c) = 1 - G(r) = 1 - (1-2\overline{F}_\mu(r))^n \leq 2n\overline{F}_\mu(r),\]
for all $r\geq 0.$

Letting $a$ go to $0$ in \eqref{eq:weakcheeger1} leads to
$$
\int|f|\, d\mu^n \leq \kappa_1T'\circ T^{-1}(r) \int |\nabla f|\, d\mu^n + 2n\kappa_1 T'\circ
T^{-1}(r) F_\mu'(r) + 2n\overline{F}_\mu(r).
$$
Note that $T' \circ T^{-1} =  J \circ F_\mu/F_\mu' = \min(F_\mu,1-F_\mu)/F_\mu'$. Hence, for $r
\geq 0$, $$T' \circ T^{-1} (r) F_\mu'(r) = \frac{1 - F_\mu(r)}{F_\mu'(r)} F_\mu'(r) =
\overline{F}_\mu(r) \, .$$ It follows that
$$
\int|f|\, d\mu^n \leq \kappa_1\frac{\overline{F}_\mu(r)}{F_\mu'(r)}\int |\nabla f|\, d\mu^n +
n\kappa_2 \overline{F}_\mu(r),
$$
for all $r\geq 0$. Using the symmetry of $\mu$ it is easy to see that
$F_\mu'\circ\overline{F}_\mu^{-1}(t)=J_\mu(t)$ for all $t\in (0,1/2).$ Consequently, one has
$$
\int|f|\, d\mu^n \leq \kappa_1\frac{s}{J_\mu(s)}\int |\nabla f|\, d\mu^n + \kappa_2ns,
$$
for all $s\in(0,1/2)$. For general $f: \R^n \to \R$ with $\mu^n$-median $m$, we apply the result
to $f-m$. This ends the proof.
\end{proof}

Combining this theorem with Bobkov's Lemma \ref{lembob} we immediately deduce

\begin{corollary}\label{cor:isotransport}
Let $\mu$ be a symmetric probability measure on $\R$ absolutely continuous with respect to the
Lebesgue measure. Assume that $\log \bF$ is convex on $\R^+$. Then, for any $n$, any Borel set $A
\subset \R^n$ satisfies
\begin{equation} \label{eq:iso}
(\mu^n)_s(\partial A) \geq   \frac{n\kappa_2}{\kappa_1} J_\mu \left( \frac{\min(\mu^n
(A),1-\mu^n(A))}{2n\kappa_2} \right).
\end{equation}
\end{corollary}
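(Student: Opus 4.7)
The plan is to apply the duality stated in Lemma \ref{lembob} directly to the weak Cheeger inequality \eqref{eq:weakcheeger} of Theorem \ref{th:weakcheeger}. The paper's phrasing ``immediately deduce'' signals that this should be a one-step computation, so the only real work is the rescaling needed to put \eqref{eq:weakcheeger} into the standard form $\int|f-m|\,d\mu^n\le\beta(t)\int|\nabla f|\,d\mu^n+t\,\Osc(f)$ of Lemma \ref{lembob}.

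First I would absorb the factor $\kappa_2 n$ multiplying $\Osc(f)$ into the parameter by setting $t=\kappa_2 n s$ in \eqref{eq:weakcheeger}. This puts the inequality in the form required by Lemma \ref{lembob} on $t\in(0,\kappa_2 n/2)$ with
$$\beta(t)=\frac{\kappa_1}{\kappa_2 n}\cdot\frac{t}{J_\mu\bigl(t/(\kappa_2 n)\bigr)}.$$
For $t\ge\kappa_2 n/2$ the inequality holds trivially because $\int|f-m|\,d\mu^n\le\tfrac12\Osc(f)\le t\,\Osc(f)$, so $\beta$ extends to all of $(0,\infty)$ without issue.

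Next I would invoke the duality formula $I(a)=\sup_{0<t\le a}(a-t)/\beta(t)$ for $a\le 1/2$ from Lemma \ref{lembob}. Rather than optimizing in $t$, I would simply plug in the explicit choice $t=a/2$, which yields
$$I(a)\ge\frac{a/2}{\beta(a/2)}=\frac{\kappa_2 n}{\kappa_1}\,J_\mu\!\left(\frac{a}{2\kappa_2 n}\right).$$
Applying this with $a=\min(\mu^n(A),1-\mu^n(A))$, which is permitted because Lemma \ref{lembob} states that $I$ is symmetric around $1/2$, produces the announced isoperimetric lower bound.

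The only step requiring any care is the ad hoc choice $t=a/2$ in the supremum, which is certainly not optimal; one could hope to refine the numerical constants by a genuine optimization of $s\mapsto(a-s)/\beta(s)$ tailored to the growth of $J_\mu$. Since the corollary states the inequality only up to the universal constants $\kappa_1,\kappa_2$ already fixed in Theorem \ref{th:weakcheeger}, no real obstacle arises.
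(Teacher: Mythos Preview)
Your argument is correct and matches the paper's proof essentially line for line: rescale the weak Cheeger inequality of Theorem~\ref{th:weakcheeger} to read off $\beta(s)=\frac{\kappa_1}{n\kappa_2}\,\frac{s}{J_\mu(s/n\kappa_2)}$, invoke the duality of Lemma~\ref{lembob}, and choose $s=t/2$ in the supremum. The paper additionally records the matching upper bound $I(t)\le \frac{n\kappa_2}{\kappa_1}J_\mu(t/n\kappa_2)$ via the monotonicity of $v\mapsto J_\mu(v)/v$ from Lemma~\ref{lem:convexity}, but this is a side remark and not needed for the corollary itself.
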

\begin{proof}
According to Lemma \ref{lembob}, if $\mu(A)\leq 1/2$ (the other case is symmetric),
$(\mu^n)_s(\partial A) \geq I(\mu^n(A))$ with $I(t)=\sup_{0<s\leq t} \frac{t-s}{\beta(s)}$, for
$t\leq 1/2$, where according to the previous theorem
$$\beta(s) = \frac{\kappa_1}{n \, \kappa_2} \, \frac{s}{J_\mu(s/n \kappa_2)} \, , $$ for $
 s \leq n\kappa_2/2$ hence for $s\leq 1/2$.  This yields
$$I(t) = \sup_{0<s\leq t} \, \frac{t-s}{\kappa_1} \, \frac{J_\mu(s/n \kappa_2)}{(s/n \kappa_2)} \,
.$$ In order to estimate $I$ we use the following: first a lower bound is obtained
for $s=t/2$ yielding the statement of the corollary. But next according to Lemma
\ref{lem:convexity}, the slope function $J_\mu(v)/v$ is non-decreasing, so that $$I(t) \, \leq \,
\sup_{0<s\leq t} \, \frac{t-s}{\kappa_1} \, \frac{J_\mu(t/n \kappa_2)}{(t/n \kappa_2)} \, \leq \,
\frac{n \kappa_2}{\kappa_1} \, J_\mu(t/n \kappa_2) \, .$$ Remark that we have shown that for
$t\leq 1/2$
\begin{equation}\label{eq:bonneI}
\frac{n \kappa_2}{\kappa_1} \, J_\mu(t/2n \kappa_2) \, \leq \, I(t) \, \leq \, \frac{n
\kappa_2}{\kappa_1} \, J_\mu(t/n \kappa_2) \, ,
\end{equation}
so that up to a factor 2 our estimate is of good order.
\end{proof}

\bigskip

\subsubsection{Application: Isoperimetric profile for product measures with heavy
tails}\label{secisoptransport}

Here we apply the previous results to product of the measures
\begin{equation}\label{eq:muphi}
\mu(dx)=\mu_\Phi(dx)= Z_\Phi^{-1}\exp\{-\Phi(|x|)\} dx \, ,
\end{equation}
$x \in \R$, with $\Phi$ concave.

For even measures on $\R$ with positive density on a segment, Bobkov and Houdr\'e \cite[Corollary
13.10]{bobkh97cbis} proved that solutions to the isoperimetric problem can be found among
half-lines, symmetric segments and their complements. More precisely, one has for $t\in(0,1)$
\begin{equation} \label{eq:imu}
I_\mu(t)=\min\left(J_\mu(t),2J_\mu\Big(\frac{\min(t,1-t)}{2} \Big) \right).
\end{equation}

Under few assumptions on $\Phi$, $I_\mu$ compares to the function
$$
L_\Phi(t)=\min(t,1-t) \Phi' \circ \Phi^{-1} \left(\log
   \frac{1}{\min(t,1-t)} \right),
$$
where $\Phi'$ denotes the right derivative. More precisely,
\begin{proposition} \label{prop:is}
   Let $\Phi : \R^+ \rightarrow \R$ be a non-decreasing concave function
   satisfying $\Phi(x)/x \to 0$ as $x \to \infty$.
   Assume that in a neighborhood of $+\infty$ the function $\Phi$ is ${\mathcal C}^2$
   and there exists $\theta >1$ such that $\Phi^\theta$ is convex. Let $\mu_\Phi$ be defined in \eqref{eq:muphi}. Define $F_\mu$ and $J_\mu$ as in \eqref{eq:isopfonc}.

Then,
\begin{equation*}
\lim_{t \rightarrow 0} \frac{J_\mu(t)}{t \Phi' \circ \Phi^{-1}(\log \frac 1t)} = 1 .
\end{equation*}
Consequently, if $\Phi(0)<\log 2$, $L_\Phi$ is defined on  $[0,1]$ and there exist constants $k_1, k_2>0$ such
that for all $t \in [0,1]$,
$$
k_1 L_\Phi(t) \le J_\mu(t) \le k_2 L_\Phi(t) .
$$
\end{proposition}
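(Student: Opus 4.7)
The plan is to derive a sharp Laplace-type expansion of $\bF$ near infinity, translate it into an asymptotic for $J_\mu$, and then upgrade asymptotic equivalence of arguments to asymptotic equivalence of values of $\Phi'$ using the regularity encoded by the convexity of $\Phi^\theta$. By symmetry of $\mu$ and positivity of its density, for $t<1/2$ one has $J_\mu(t) = Z_\Phi^{-1} e^{-\Phi(R(t))}$ where $R(t)$ is defined by $\bF(R(t))=t$, and $R(t) \to +\infty$ as $t \to 0$.

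I would first perform an integration by parts on $[R(t),+\infty)$:
$$Z_\Phi\, t \,=\, \int_{R(t)}^{+\infty} e^{-\Phi(y)}\,dy \,=\, \frac{e^{-\Phi(R(t))}}{\Phi'(R(t))} \,-\, \int_{R(t)}^{+\infty} \frac{\Phi''(y)}{\Phi'(y)^2}\, e^{-\Phi(y)}\,dy.$$
Expanding the convexity of $\Phi^\theta$ gives $(\theta-1)(\Phi')^2 + \Phi\Phi'' \geq 0$; combined with $\Phi''\le 0$ (concavity of $\Phi$) this forces $|\Phi''|/(\Phi')^2 \leq (\theta-1)/\Phi$, which tends to $0$ at infinity. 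Hence the remainder is $o(1)$ times the leading term, and
$$Z_\Phi\, t \,\sim\, \frac{e^{-\Phi(R(t))}}{\Phi'(R(t))}, \qquad \text{i.e.\ } J_\mu(t)\,\sim\, t\,\Phi'(R(t)).$$

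Next I need $\Phi'(R(t)) \sim \Phi'\circ\Phi^{-1}(\log 1/t)$. Taking logarithms in the previous display yields $\log(1/t) = \Phi(R(t)) + \log \Phi'(R(t)) + O(1)$. Since $(\Phi^\theta)' = \theta\,\Phi^{\theta-1}\Phi'$ is non-decreasing and positive, it is bounded below by a positive constant for large $r$, whence $|\log \Phi'(r)| = O(\log \Phi(r)) = o(\Phi(r))$, and therefore $\Phi(R(t)) \sim \log(1/t)$. Setting $g(u) := \Phi'\circ\Phi^{-1}(u)$, the function $g$ is non-increasing (concavity of $\Phi$) while $u \mapsto u^{\theta-1}g(u) = \theta^{-1}(\Phi^\theta)'\circ\Phi^{-1}(u)$ is non-decreasing (convexity of $\Phi^\theta$). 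For $u_1 \geq u_2$ these two monotonicities squeeze $g(u_1)/g(u_2)$ between $(u_2/u_1)^{\theta-1}$ and $1$ (and symmetrically when $u_1\leq u_2$), so $u_1/u_2 \to 1$ implies $g(u_1)/g(u_2) \to 1$. Applied to $u_1 = \Phi(R(t))$ and $u_2 = \log(1/t)$ this gives the desired equivalence, completing the proof of the limit.

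For the consequence, the hypothesis $\Phi(0)<\log 2$ ensures $\log(1/\min(t,1-t)) \geq \log 2 > \Phi(0)$ lies in the range of $\Phi$ for every $t\in(0,1)$, so $L_\Phi$ is well-defined, continuous and strictly positive on $(0,1)$; the same holds for $J_\mu$. Since $\mu$ is symmetric around $0$, both functions are symmetric around $1/2$, so the limit at $t\to 0$ transfers to a matching limit at $t\to 1$. Hence $J_\mu/L_\Phi$ extends continuously to $[0,1]$ with value $1$ at the endpoints, and is therefore bounded above and below by positive constants $k_2, k_1$, which is exactly the two-sided bound. The main obstacle is the third paragraph above: the equivalence $\Phi(R(t))\sim \log(1/t)$ only gives closeness of the \emph{arguments} of $\Phi'$, and the full strength of "$\Phi^\theta$ convex" via the monotonicity of $u^{\theta-1}g(u)$ is exactly what converts this into closeness of the values.
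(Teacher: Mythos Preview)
Your proof is correct and shares the same skeleton as the paper's: both derive the Laplace asymptotic $Z_\Phi t \sim e^{-\Phi(R(t))}/\Phi'(R(t))$ (the paper states it as a known fact, you prove it via integration by parts with the clean remainder bound $|\Phi''|/(\Phi')^2 \le (\theta-1)/\Phi$), and both then reduce to showing $\Phi'(R(t)) \sim \Phi'\circ\Phi^{-1}(\log 1/t)$.

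The genuine difference is in this last step. The paper applies a first-order Taylor expansion of $\Phi'\circ\Phi^{-1}$ between $\Phi(R(t))$ and $\log(1/t)$ and controls the error term through a four-part technical lemma giving polynomial bounds of the type $\Phi(x)\asymp x\Phi'(x)$, $\Phi(x)\ge x^{c_2}$, $\Phi'(x)\ge x^{-c_3}$, $\Phi(x/c_4)\le \tfrac12\Phi(x)$. Your route avoids all of this: you observe that $g(u)=\Phi'\circ\Phi^{-1}(u)$ is non-increasing while $u^{\theta-1}g(u)=\theta^{-1}(\Phi^\theta)'\circ\Phi^{-1}(u)$ is non-decreasing, which squeezes $(u_2/u_1)^{\theta-1}\le g(u_1)/g(u_2)\le 1$ for $u_1\ge u_2$ and immediately converts $u_1/u_2\to 1$ into $g(u_1)/g(u_2)\to 1$. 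This is a sharper and more economical use of the hypothesis ``$\Phi^\theta$ convex'': it bypasses the technical lemma entirely for the limit statement, at the cost of not producing the explicit doubling-type estimates on $\Phi$ that the paper later reuses (e.g.\ in the proof of Corollary~\ref{cor:iso2}). Your argument for the two-sided bound on $[0,1]$ via compactness is essentially the same as the paper's implicit one.
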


\begin{remark}
This result appears in \cite{bart01lcbe,bobkov-zeg} in the particular case $\Phi(x)=|x|^p$ and in
\cite{BCR3} for $\Phi$ convex and $\sqrt \Phi$ concave. \hfill $\diamondsuit$
\end{remark}

The previous results together with Corollary \ref{cor:isotransport} lead to the following
(dimensional) isoperimetric inequality.

\begin{corollary} \label{cor:iso2}
Let $\Phi : \R^+ \rightarrow \R$ be a non-decreasing concave function
   satisfying $\Phi(x)/x \to 0$ as $x \to \infty$ and $\Phi(0) < \log 2$.
   Assume that in a neighborhood of $+\infty$ the function $\Phi$ is ${\mathcal C}^2$
   and there exists $\theta >1$ such that $\Phi^\theta$ is convex.
   Let $d \mu(x) = Z_\Phi^{-1}e^{- \Phi(|x|)}dx$ be a probability
   measure on $\R$. Then, $$
I_{\mu^n}(t) \geq c \min(t,1-t) \Phi' \circ \Phi^{-1}\left(\log \frac{n}{\min(t,1-t)}\right)
\qquad \forall t \in [0,1], \; \forall n
$$
for some constant $c>0$ independent on $n$.
\end{corollary}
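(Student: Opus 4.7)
The plan is to chain together the dimensional isoperimetric bound of Corollary~\ref{cor:isotransport} with the sharp lower bound $J_\mu \geq k_1 L_\Phi$ from Proposition~\ref{prop:is}, and then to absorb the multiplicative constant $2\kappa_2$ that appears inside the logarithm into an overall dimension-free factor.

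First, since $\mu$ is symmetric with density $Z_\Phi^{-1}e^{-\Phi(|x|)}$ and $\Phi$ is concave, Lemma~\ref{lem:convexity} guarantees that $\log\overline{F}_\mu$ is convex on $\R^+$, and Corollary~\ref{cor:isotransport} applies:
\[
I_{\mu^n}(t) \;\geq\; \frac{n\kappa_2}{\kappa_1}\, J_\mu\!\left(\frac{\min(t,1-t)}{2n\kappa_2}\right), \qquad t\in[0,1].
\]
The hypothesis $\Phi(0)<\log 2$, together with the assumed $\mathcal{C}^2$ smoothness and the convexity of $\Phi^\theta$ near $+\infty$, puts us in the setting of Proposition~\ref{prop:is}, which gives $J_\mu(s)\geq k_1 L_\Phi(s)$ for every $s\in[0,1]$. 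Evaluating at $s=\min(t,1-t)/(2n\kappa_2)$, which is $\leq 1/(4\kappa_2)<1/2$, and using $L_\Phi(s)=s\,\Phi'\!\circ\!\Phi^{-1}(\log(1/s))$ in that range, one gets
\[
I_{\mu^n}(t) \;\geq\; \frac{k_1}{2\kappa_1}\,\min(t,1-t)\,\Phi'\!\circ\!\Phi^{-1}\!\left(\log\frac{2n\kappa_2}{\min(t,1-t)}\right).
\]

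The remaining and main obstacle is to replace $2n\kappa_2$ by $n$ inside the logarithm while keeping a lower bound uniform in $n$ and $t$. Writing $u=\log(n/\min(t,1-t))\geq\log 2$ and $C=\log(2\kappa_2)$, this reduces to proving that
\[
\Phi'\!\circ\!\Phi^{-1}(u+C) \;\geq\; c'\,\Phi'\!\circ\!\Phi^{-1}(u), \qquad \forall u\geq\log 2,
\]
for some constant $c'>0$ depending only on $\Phi$ and $C$. For $u$ large enough that $y=\Phi^{-1}(u)$ lies in the neighborhood of infinity where $\Phi^\theta$ is convex, the monotonicity of $(\Phi^\theta)'=\theta\,\Phi^{\theta-1}\Phi'$ between $y$ and $y'=\Phi^{-1}(u+C)$ yields
\[
\Phi'(y') \;\geq\; \left(\frac{\Phi(y)}{\Phi(y)+C}\right)^{\!\theta-1}\Phi'(y) \;\geq\; (1+C)^{-(\theta-1)}\Phi'(y)
\]
as soon as $\Phi(y)\geq 1$. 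On the complementary bounded range of $u$, one invokes compactness: because $\Phi$ is concave with $\Phi(\infty)=+\infty$, $\Phi'$ cannot vanish (otherwise $\Phi$ would be eventually constant), so $\Phi'\!\circ\!\Phi^{-1}$ is continuous and strictly positive on every compact subinterval of $[\Phi(0),\infty)$, hence its ratio at the two points $u$ and $u+C$ is bounded below uniformly. Combining the two regimes produces the claimed constant $c'$; substituting into the previous display yields the corollary with $c=k_1 c'/(2\kappa_1)$, which indeed does not depend on $n$.
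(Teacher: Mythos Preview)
Your proof is correct and follows the same overall strategy as the paper: apply Corollary~\ref{cor:isotransport}, plug in the lower bound $J_\mu\geq k_1 L_\Phi$ from Proposition~\ref{prop:is}, and then absorb the multiplicative constant $2\kappa_2$ sitting inside the logarithm.

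The only substantive difference lies in how you carry out the absorption step. The paper relies on the doubling-type estimates of Lemma~\ref{lem:tec}: it first bounds $\log(2n\kappa_2/t)$ by $C\log(n/t)$, then uses Point~(iv) repeatedly to control $\Phi^{-1}(Cx)$ in terms of $\Phi^{-1}(x)$, and finally uses Points~(i) and~(iv) to show $\Phi'(x/c_4)\geq(2c_4/c_1^2)\Phi'(x)$. Your route is more direct: you exploit the convexity of $\Phi^\theta$ through the monotonicity of its derivative $\theta\Phi^{\theta-1}\Phi'$, which in one line gives
\[
\Phi'\!\circ\!\Phi^{-1}(u+C)\;\geq\;\Bigl(\tfrac{u}{u+C}\Bigr)^{\theta-1}\Phi'\!\circ\!\Phi^{-1}(u)
\]
for $u$ large, and then close by compactness on the remaining bounded range of $u$. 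This bypasses the iterated use of Lemma~\ref{lem:tec} and yields an explicit lower constant $(1+C)^{-(\theta-1)}$ for large $u$. One small remark: in your compactness argument, continuity of $\Phi'\!\circ\!\Phi^{-1}$ is not guaranteed (the right derivative of a concave function can jump), but what you actually need---that $\Phi'\!\circ\!\Phi^{-1}$ is strictly positive and bounded on compact subintervals of $[\Phi(0),\infty)$---follows simply from the monotonicity and nonvanishing of $\Phi'$.
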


\begin{remark}
Note that there is a gain of a square root with respect to the results in \cite{BCR2}. \hfill
$\diamondsuit$
\end{remark}

For the clarity of the exposition, the rather technical proofs of Proposition \ref{prop:is} and Corollary \ref{cor:iso2} are postponed to the Appendix.

We end this section with two examples.

\begin{proposition}[Sub-exponential law]
Consider the probability measure $\mu$ on $\R$, with density
$Z_p^{-1}e^{-|x|^p}$, $p \in (0,1]$. There is a constant $c$ depending only on $p$ such that for all $n\geq 1$ and all $A\subset \R^n$,
\[\mu^n_s(\partial A)\geq c \min(\mu^n(A),1-\mu^n(A))\log\left(\frac{n}{\min(\mu^n(A),1-\mu^n(A))}\right)^{1-\frac{1}{\beta}} .\]
\end{proposition}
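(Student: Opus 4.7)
The plan is to apply Corollary~\ref{cor:iso2} directly, taking $\Phi(x)=x^p$ for $x\geq 0$. I need to distinguish the cases $p\in(0,1)$ and $p=1$; the former is the substantive case, the latter reduces to a classical Cheeger inequality for the product exponential.

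For $p\in(0,1)$, I first verify the hypotheses of Corollary~\ref{cor:iso2}. The function $\Phi(x)=x^p$ is non-decreasing and concave on $\R^+$, satisfies $\Phi(0)=0<\log 2$, is $\mathcal{C}^2$ on $(0,\infty)$, and obeys $\Phi(x)/x=x^{p-1}\to 0$ as $x\to\infty$. For the condition that $\Phi^\theta$ be convex for some $\theta>1$, I choose $\theta=1/p>1$, giving $\Phi^\theta(x)=x$, which is (trivially) convex. Thus the corollary applies and yields
\[
I_{\mu^n}(t)\;\geq\; c\,\min(t,1-t)\,\Phi'\circ\Phi^{-1}\!\left(\log\frac{n}{\min(t,1-t)}\right)
\]
for some $c>0$ independent of $n$. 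A direct computation gives $\Phi^{-1}(y)=y^{1/p}$ and $\Phi'(x)=px^{p-1}$, hence
\[
\Phi'\circ\Phi^{-1}(y)\;=\;p\,y^{(p-1)/p}\;=\;p\,y^{1-1/p}.
\]
Absorbing the factor $p$ into the constant and plugging in $y=\log\bigl(n/\min(\mu^n(A),1-\mu^n(A))\bigr)$ yields the announced lower bound (with the exponent $1-1/p$ in place of the typographical $1-1/\beta$).

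For $p=1$, the hypothesis $\Phi(x)/x\to 0$ fails, but $\mu$ is (a rescaling of) the two-sided exponential $\nu$. The Bobkov--Houdré Cheeger inequality~\eqref{eq:bh} gives a dimension-free Cheeger inequality for $\nu^n$, and consequently $I_{\nu^n}(t)\geq c\min(t,1-t)$. Since $1-1/p=0$, the logarithmic factor in the claimed bound disappears and the statement reduces exactly to this Cheeger-type bound, so the proposition holds in this case as well.

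The main work has already been done in Corollary~\ref{cor:iso2} and Proposition~\ref{prop:is}; the only genuine content of the proof here is the verification of the structural hypotheses on $\Phi$ (in particular identifying $\theta=1/p$) and the explicit computation of $\Phi'\circ\Phi^{-1}$. No further delicate estimate is needed, so I do not anticipate any real obstacle beyond these routine checks.
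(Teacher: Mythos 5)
Your proof is correct and follows essentially the same route as the paper, which simply invokes Corollary \ref{cor:iso2} with $\Phi(x)=x^p$ (the exponent $1-1/\beta$ in the statement is indeed a typo for $1-1/p$). Your separate treatment of the endpoint $p=1$, where the hypothesis $\Phi(x)/x\to 0$ fails and one falls back on the dimension-free Cheeger inequality \eqref{eq:bh}, is a welcome extra precaution that the paper's one-line proof glosses over.
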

\proof
The proof follows immediately from Corollary \ref{cor:iso2}.
\endproof
\begin{remark}
Let $I_{\mu^n}(t)$ be the isoperimetric profile of $\mu^n$.
The preceding bound combined with the upper bound of \cite[Inequality (4.10)]{BCR2} gives
$$
c(p)\, t\left(\log\Big(\frac{n}{t}\Big) \right)^{1-1/p} \le I_{\mu^{n}}(t) \leq
c'(p) t\log(1/t) \left(\log\Big(\frac{n}{\log(1/t)}\Big) \right)^{1-1/p}
$$
for any $n\ge\log(1/t)/\log2$ and $t\in (0,1/2)$. Hence, we obtain the right logarithmic behavior
of the isoperimetric profile in term of the dimension $n$. This result extends the corresponding
one obtained in section \ref{secweak} for this class of examples.\hfill $\diamondsuit$
\end{remark}

\medskip

More generally consider the probability measure $\mu=Z^{-1}e^{-|x|^p \log( \gamma + |x|)^\alpha}$, $p \in (0,1]$, $\alpha \in \R$ and
$\gamma= \exp\{2|\alpha|/(p(1-p))\}$ chosen in such a way that $\Phi(x)=|x|^p \log(
\gamma + |x|)^\alpha$ is concave on $\R^+$. The assumptions of Corollary \ref{cor:iso2} are
satisfied. Hence, we get that
$$
I_{\mu^n}(t) \geq c(p,\alpha) t\left(\log\Big(\frac{n}{t}\Big) \right)^{1-1/p} \left(
\log  \log \left(e + \frac{n}{t} \right) \right)^\frac{\alpha}{p}, \qquad t\in (0,1/2).
$$

\bigskip

Cauchy laws do not enter the framework of Corollary \ref{cor:iso2}. Nevertheless, explicit
computations can be done.

\begin{proposition}[Cauchy distributions]
Consider $d\mu(x)= \frac{\alpha}{2(1+|x|)^{1+\alpha}}\,dx$ on $\R$, with $\alpha>0$. There is $c>0$ depending only on $\alpha$ such that for all $n\geq 1$ and all $A\subset \R^n$,
\[\mu^n_s(\partial A)\geq c\frac{\min(\mu^n(A),1-\mu^n(A))^{1 +
\frac{1}{\alpha}}}{n^\frac{1}{\alpha}} .\]
\end{proposition}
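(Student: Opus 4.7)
The strategy is to apply the transportation-based isoperimetric inequality of Corollary \ref{cor:isotransport} and then compute the isoperimetric function $J_\mu$ explicitly, which is tractable for this Cauchy-type density on the line.

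First, I would check the hypothesis of Corollary \ref{cor:isotransport}, namely that $\log \overline{F}_\mu$ is convex on $\R^+$. Writing $d\mu(x) = e^{-\Phi(|x|)}\, dx$ with
\[
\Phi(r) = (1+\alpha)\log(1+r) + \log(2/\alpha),
\]
one has $\Phi''(r) = -(1+\alpha)/(1+r)^2 < 0$, so $\Phi$ is concave on $\R^+$. Lemma \ref{lem:convexity} then gives the convexity of $\log \overline{F}_\mu$.

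Second, I would compute $J_\mu$ on $(0, 1/2]$. A direct computation gives $\overline{F}_\mu(x) = \tfrac{1}{2}(1+x)^{-\alpha}$ for $x \geq 0$, hence by the symmetry of $\mu$, for $t \in (0,1/2]$,
\[
F_\mu^{-1}(t) = 1 - (2t)^{-1/\alpha}, \qquad 1 + |F_\mu^{-1}(t)| = (2t)^{-1/\alpha}.
\]
Since $F_\mu'(x) = \tfrac{\alpha}{2}(1+|x|)^{-(1+\alpha)}$, plugging in gives
\[
J_\mu(t) = F_\mu'\circ F_\mu^{-1}(t) = \alpha\, 2^{1/\alpha}\, t^{1 + 1/\alpha}, \qquad t \in (0, 1/2],
\]
and by symmetry $J_\mu$ equals the same on $[1/2, 1)$ with $t$ replaced by $1-t$.

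Third, I would invoke Corollary \ref{cor:isotransport}: letting $m(A) := \min(\mu^n(A), 1 - \mu^n(A))$ and noting $m(A)/(2n\kappa_2) \leq 1/2$, the formula above for $J_\mu$ applies to give
\[
(\mu^n)_s(\partial A) \geq \frac{n\kappa_2}{\kappa_1}\, J_\mu\!\left(\frac{m(A)}{2n\kappa_2}\right)
= \frac{n\kappa_2}{\kappa_1}\cdot \alpha\, 2^{1/\alpha}\cdot \frac{m(A)^{1 + 1/\alpha}}{(2n\kappa_2)^{1 + 1/\alpha}}
= c(\alpha)\, \frac{m(A)^{1 + 1/\alpha}}{n^{1/\alpha}},
\]
with $c(\alpha) = \tfrac{\alpha\, 2^{1/\alpha}}{\kappa_1 (2\kappa_2)^{1/\alpha}}$, which is exactly the claimed bound.

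There is no real obstacle here once Corollary \ref{cor:isotransport} is in hand; the only care needed is in the explicit inversion of $F_\mu$ and in checking the assumption of Lemma \ref{lem:convexity}. The strength of the argument lies entirely in Corollary \ref{cor:isotransport}: it reduces the $n$-dimensional isoperimetric statement to a one-dimensional computation plus the identification of the correct dimensional scaling $m(A)/n$ inside $J_\mu$, which automatically produces the factor $n^{-1/\alpha}$ out of the power law $J_\mu(t) \sim t^{1+1/\alpha}$.
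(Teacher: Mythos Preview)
Your proof is correct and follows exactly the paper's approach: verify convexity of $\log\overline{F}_\mu$, compute $J_\mu(t)=\alpha 2^{1/\alpha}\min(t,1-t)^{1+1/\alpha}$, and plug into Corollary \ref{cor:isotransport}. The only blemish is a harmless arithmetic slip in the explicit constant (you dropped a factor of $2$; one gets $c(\alpha)=\alpha/(2\kappa_1\kappa_2^{1/\alpha})$), which does not affect the statement since only some $c>0$ depending on $\alpha$ is claimed.
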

\proof
Since $1-F_{\mu}(r) = \frac{1}{2(1+r)^\alpha}$ for $r \in \R^+$, $\log(1-F_{\mu})$
is convex on $\R^+$. Moreover $J_{\mu}(t)=\alpha 2^{1/\alpha}\min(t,1-t)^{1+1/\alpha}$, and so the result follows by Corollary \ref{cor:isotransport}.
\endproof
\begin{remark}

Note that, since $J_{\mu}(t)=\alpha 2^{1/\alpha}\min(t,1-t)^{1+1/\alpha}$, one has
$$
I_{\mu}(t)=\alpha t^{1+1/\alpha},\qquad \forall t\in(0,1/2).
$$
Hence, our results reads as
$$
I_{\mu^n}(t) \geq c\, \frac{t}{n^{1/\alpha}} t^{1/\alpha}
$$
for some constant $c$ depending only on $\alpha$. Together with \cite[Inequality (4.9)]{BCR2}
(for the upper bound) our results gives for any $n\geq\log(1/t)/\log2$ and $t\in (0,1/2)$
$$
c\, \frac{t}{n^{1/\alpha}} t^{1/\alpha} \le I_{\mu^n}(t) \leq c'\,
\frac{t}{n^{1/\alpha}} \log(1/t)^{1+1/\alpha}.
$$
Again, we get the correct polynomial behavior in the dimension $n$. \hfill $\diamondsuit$
\end{remark}

\bigskip


\section{Weighted Poincar\'e inequalities for some spherically symmetric probability measures with heavy tails}\label{secspherique}

In this section we deal with spherically symmetric probability measures $d\mu(x) = h(|x|)dx$ on
$\mathbb{R}^n$ with $|\cdot|$ the Euclidean distance. In polar coordinates, the measure $\mu$ with density $h$ can be viewed as the distribution of $\xi \theta$, where $\theta$ is a random vector uniformly
distributed on the unit sphere $S^{n-1}$, and $\xi$ (the radial part) is a random variable
independent of $\theta$ with distribution function
\begin{equation}\label{eq:polar1}
\mu\left\{|x| \leq r \} \right) = n \omega_n \int_0^r s^{n-1} h(s) ds \, ,
\end{equation}
where $\omega_n$ denotes the volume of the unit ball in $\mathbb{R}^n$. We shall denote by
$\rho_\mu(r)=n \omega_n r^{n-1} h(r)$ the density of the distribution of $\xi$, defined on
$\mathbb{R}_+$.

Our aim is to obtain weighted Poincar\'e inequalities with explicit constants for $\mu$ on $\R^n$ of the forms $d\mu(x)=\frac{1}{Z}\frac{1}{\left(1+|x|\right)^{(n+\alpha)}}\,dx$ with $\alpha>0$ or
$d\mu(x)=\frac{1}{Z}e^{-|x|^p}\,dx$, with $p\in (0,1)$. To do so we will apply a general radial transportation technique which is explained in the following result.

Given an application $T:\R^n\to\R^n$, the image of $\mu$ under $T$ is by definition the unique probability measure $\nu$ such that \[\int f\,d\nu=\int f\circ T\,d\mu,\qquad\forall f.\]
In the sequel, we shall use the notation $T\sharp\mu$ to denote this probability measure.

\begin{theorem}[Transportation method] \label{thm:transp-sph}
Let $\mu$ and $\nu$ be two spherically symmetric probability measures on $\R^n$ and suppose that $\mu=T\sharp\nu$
with $T$ a radial transformation of the form: $T(x)=\varphi(|x|)\frac{x}{|x|}$, with $\varphi:\R^+\to\R^+$ an increasing function with $\varphi(0)=0$.

If $\nu$ satisfies Poincar\'e inequality with constant $C$, then $\mu$ verifies the following weighted Poincar\'e inequality
\[\Var_\mu(f)\leq C \int \omega(|x|)^2 |\nabla f|^2\,d\mu(x),\qquad \forall f,\]
with the weight $\omega$ defined by
\[\omega(r)=\max\left( \varphi'\circ\varphi^{-1}(r),\frac{r}{\varphi(r)}\right).\]

If one suppose that $\nu$ verifies Cheeger inequality with constant $C$, then $\mu$ verifies the following weighted Cheeger inequality
\[\int |f-m|\,d\mu\leq C\int \omega(|x|)|\nabla f|(x)\,d\mu(x),\qquad \forall f,\]
with the same weight $\omega$ as above and $m$ being a median of $f$.

Finally, if the function $\varphi$ is convex, then $\omega(r)=\varphi'\circ\varphi^{-1}(r).$
\end{theorem}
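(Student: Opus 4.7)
The plan is to apply the Poincar\'e (resp.\ Cheeger) inequality for $\nu$ to the pulled-back test function $g=f\circ T$ and then change variables via $T$. Since $\mu=T\sharp\nu$, one has $\int (f\circ T)\,d\nu=\int f\,d\mu$ and $\int(f\circ T)^{2}\,d\nu=\int f^{2}\,d\mu$, so $\Var_{\nu}(f\circ T)=\Var_{\mu}(f)$; likewise every $\mu$-median of $f$ is a $\nu$-median of $f\circ T$, so the $L^{1}$ median integrals match as well. The whole problem reduces to controlling $|\nabla(f\circ T)|$ pointwise by $|\nabla f|\circ T$ times a factor depending only on $|x|$.

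The central calculation is the Jacobian of the radial map $T$. In polar coordinates $x=r u$ with $r=|x|$ and $u\in S^{n-1}$, the map reads $(r,u)\mapsto(\varphi(r),u)$, so $DT(x)$ is diagonal in the orthonormal radial/tangential frame, dilating the radial direction by $\varphi'(r)$ and each tangential direction by $\varphi(r)/r$ (because the sphere of radius $r$ is sent onto the sphere of radius $\varphi(r)$ by the identity map in the angular variable). Decomposing $\nabla f(T(x))$ into its radial part $(\partial_{\rho}f)u$ and its spherical part $\nabla^{S}f$, the identity $\nabla(f\circ T)(x)=DT(x)^{\ast}\,\nabla f(T(x))$ then yields the pointwise bound
\[
|\nabla(f\circ T)(x)|^{2}=\varphi'(r)^{2}(\partial_{\rho}f)^{2}(T(x))+\Big(\tfrac{\varphi(r)}{r}\Big)^{2}|\nabla^{S}f|^{2}(T(x))\le M(r)^{2}|\nabla f(T(x))|^{2},
\]
where $M(r):=\max\big(\varphi'(r),\varphi(r)/r\big)$; the analogous bound $|\nabla(f\circ T)(x)|\le M(r)|\nabla f|(T(x))$ will be used in the $L^{1}$ case.

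Once this pointwise bound is in hand, the rest is routine: plugging it into Poincar\'e for $\nu$ and changing variables by $y=T(x)$ (so that $r=\varphi^{-1}(|y|)$) gives
\[
\Var_{\mu}(f)\le C\int M\big(\varphi^{-1}(|y|)\big)^{2}|\nabla f(y)|^{2}\,d\mu(y),
\]
i.e.\ the weighted Poincar\'e inequality with weight $\omega(|y|)=M\circ\varphi^{-1}(|y|)$, the two arguments of the maximum being $\varphi'\circ\varphi^{-1}(|y|)$ and $\varphi(r)/r$ evaluated at $r=\varphi^{-1}(|y|)$, exactly the weight claimed in the theorem. The Cheeger case is carried out in the same way using Cheeger for $\nu$ and the pushforward of medians. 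For the final clause, if $\varphi$ is convex with $\varphi(0)=0$ then $\varphi'$ is non-decreasing and $\varphi(s)=\int_{0}^{s}\varphi'(t)\,dt\le s\varphi'(s)$, hence $\varphi(s)/s\le\varphi'(s)$; taking $s=\varphi^{-1}(r)$, the maximum defining $\omega(r)$ is achieved by $\varphi'\circ\varphi^{-1}(r)$.

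The main obstacle I anticipate is the clean Jacobian step: one must carefully verify that $DT$ (and hence $DT^{\ast}$) is truly diagonal in the radial/tangential frame with the eigenvalues $\varphi'(r)$ and $\varphi(r)/r$, and handle the mild singularity of polar coordinates at the origin (which is harmless since $\varphi(0)=0$ and we may approximate by functions supported away from $0$). Once this splitting is rigorously established, the pushforward identity $\mu=T\sharp\nu$ finishes both the Poincar\'e and Cheeger statements, and the elementary convexity inequality $\varphi(s)/s\le\varphi'(s)$ yields the last clause.
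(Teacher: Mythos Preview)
Your argument is correct and follows the same route as the paper: apply the Poincar\'e (resp.\ Cheeger) inequality for $\nu$ to $f\circ T$, compute $|\nabla(f\circ T)|$ in polar coordinates via the diagonal Jacobian of $T$ (eigenvalues $\varphi'(r)$ and $\varphi(r)/r$), bound by the maximum of the two, and push forward by $T$. One small caveat: the weight you---and the paper's own proof---actually obtain is $\omega(r)=\max\big(\varphi'\circ\varphi^{-1}(r),\,r/\varphi^{-1}(r)\big)$; the expression $r/\varphi(r)$ in the theorem statement appears to be a misprint, so your claim of matching the stated weight verbatim should be read with this in mind.
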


\begin{remark}
In \cite{Wang08}, Wang has used a similar technique to get weighted logarithmic Sobolev inequalities.
\end{remark}

\proof
Consider a locally Lipschitz function $f : \R^n \to \R$ ; it follows from the minimizing property of the variance and the Poincar\'e inequality verified by $\nu$ that
\begin{align*}
\Var_\mu(f) \leq \int \left(f - \int f\,d\nu\right)^2\, d\mu
 =
\int \left(f(T) - \int fd\nu \right)^2\, d\nu \leq C \int |\nabla (f \circ T) |^2\, d\nu.
\end{align*}
In polar coordinates we have
\begin{eqnarray*}
 |\nabla (f \circ T) |^2
& = & \left[ \frac{\partial }{\partial r} (f \circ T)\right]^2 + \frac{1}{r^2} \left|
\nabla_\theta (f\circ T)\right|^2
 =
\left(\frac{\partial f}{\partial r} \right)^2 \!\!\circ T  \times {\varphi'}^2
+ \frac{1}{r^2} \left| \nabla_\theta f\right|^2 \\
& = & \left(\frac{\partial f}{\partial r}  \right)^2 \!\! \circ T \times \left(\varphi'\circ \varphi^{-1}
\circ \varphi \right)^2 + \frac{1}{(\varphi^{-1} \circ \varphi)^2} \left| \nabla_\theta f\right|^2 \circ T .
\end{eqnarray*}

Moreover, denoting by $d\theta$ the normalized Lebesgue measure on $S^{n-1}$, and using the notations introduced in the beginning of the section, the previous inequality reads
\begin{eqnarray*}
\Var_\mu(f) & \leq & C \iint  \left(\left(\frac{\partial f}{\partial r}  \right)^2 \!\! \circ T \times
\left(\varphi'\circ \varphi^{-1} \circ \varphi \right)^2 + \frac{1}{(\varphi^{-1} \circ \varphi)^2} \left| \nabla_\theta
f\right|^2 \circ T\right)
\rho_\nu(r) dr d\theta  \\
& = & C \iint \left(  \left(\frac{\partial f}{\partial r}\right)^2 \times \left( \varphi'\circ
\varphi^{-1} \right)^2 + \frac{1}{(\varphi^{-1})^2} \left| \nabla_\theta f\right|^2 \right)
\rho_\mu(r) dr d\theta  \\
& \leq & C \iint \omega^2(r) \left(
\left(\frac{\partial f}{\partial r}\right)^2 +  \frac{1}{r^2}\left| \nabla_\theta f\right|^2
\right)
\rho_\mu(r) dr d\theta  \\
& =& C \int \omega^2 (|x|) |\nabla f|^2 d\mu
\end{eqnarray*}
where we used the fact that the map $\varphi$ transports $\rho_\nu\,dr$ onto $\rho_\mu\,dr$. The proof of the Cheeger case follows exactly in the same way.

Now, let us suppose that $\varphi$ is convex. Since $\varphi$ is convex and $\varphi(0)=0$, one has $\frac{\varphi(r)}{r} \leq \varphi'(r)$. This implies at once that $\omega(r)=\varphi'\circ\varphi^{-1}$ and achieves the proof.
\smallskip
\endproof
To apply Theorem \ref{thm:transp-sph}, one needs a criterion for Poincar\'e inequality.
The following theorem is a slight adaptation of a result by Bobkov \cite[Theorem 1]{bobsphere}.
\begin{theorem}\label{bobsphere}
Let $d\nu(x)=h(|x|)\,dx$ be a spherically symmetric probability measure on $\R^n.$ Define as before $\rho_\nu$ as the density of the law of $|X|$ where $X$ is distributed according to $\nu$ and suppose that $\rho_\nu$ is a log-concave
function. Then $\nu$ verifies the following Poincar\'e inequality
\[\Var_\nu(f)\leq C_\nu \int |\nabla f|^2\,d\nu, \qquad \forall f\]
with $C_\nu=12\left(\int r^2\rho_\nu(r)\,dr -\left(\int r\rho_\nu(r)\,dr\right)^2\right)+\frac{1}{n}\int r^2\rho_\nu(r)\,dr.$
\end{theorem}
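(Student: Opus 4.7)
The plan is to decompose the variance using polar coordinates and then combine a one-dimensional Poincar\'e inequality on the radial marginal (exploiting the log-concavity of $\rho_\nu$) with the Poincar\'e inequality on the sphere. Write $x=r\theta$ with $r=|x|\ge 0$ and $\theta=x/|x|\in S^{n-1}$. Since $\nu$ is spherically symmetric, if $X\sim\nu$ then $\xi:=|X|$ has density $\rho_\nu$ and is independent of $\theta$, which is uniform on $S^{n-1}$; denote this uniform probability measure by $d\theta$, and set $\bar f(r):=\int_{S^{n-1}}f(r\theta)\,d\theta$ for the spherical mean.

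Conditioning on $\xi$, or equivalently using the $L^2(\nu)$-orthogonality between $\bar f(|\cdot|)$ and $f-\bar f(|\cdot|)$, gives the splitting
\[
\Var_\nu(f) \;=\; \Var_{\rho_\nu}(\bar f) \;+\; \int_0^{\infty} \Var_{d\theta}\bigl(f(r\,\cdot)\bigr)\,\rho_\nu(r)\,dr.
\]
For the \emph{radial} piece I would invoke Bobkov's spectral-gap theorem for one-dimensional log-concave distributions, which bounds the Poincar\'e constant of $\rho_\nu$ by $12\,\Var(\xi)$. Combined with Cauchy--Schwarz applied to $\bar f'(r)=\int_{S^{n-1}}\nabla f(r\theta)\cdot\theta\,d\theta$, this yields
\[
\Var_{\rho_\nu}(\bar f) \;\le\; 12\,\Var(\xi)\int|\nabla f|^2\,d\nu.
\]
For the \emph{spherical} piece I would apply the Poincar\'e inequality on the unit sphere $S^{n-1}$ (whose spectral gap equals $n-1$) to $g(\theta):=f(r\theta)$ at each fixed $r$. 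Using the chain-rule identity $|\nabla_{S^{n-1}}g(\theta)|^2=r^{2}|\nabla_T f(r\theta)|^2\le r^{2}|\nabla f(r\theta)|^2$, one arrives at
\[
\int_0^{\infty}\Var_{d\theta}\bigl(f(r\,\cdot)\bigr)\,\rho_\nu(r)\,dr \;\le\; \frac{1}{n-1}\int r^{2}|\nabla f|^2\,d\nu.
\]

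Summing the two estimates produces a bound by $\int\!\bigl(12\,\Var(\xi)+\tfrac{r^2}{n-1}\bigr)|\nabla f|^2\,d\nu$. The main obstacle is the final step: upgrading this \emph{weighted} bound, which carries the unbounded radial factor $r^2$, to the \emph{unweighted} Poincar\'e constant $C_\nu=12\,\Var(\xi)+\tfrac{1}{n}\E[\xi^2]$ announced in the theorem. No pointwise manipulation suffices, and one must again exploit the log-concavity of $\rho_\nu$, together with a finer analysis of the angular contribution in the spectral decomposition on the sphere, to trade the weight $r^2$ against the density $\rho_\nu$ (picking up the replacement $r^2\rightsquigarrow\E[\xi^2]$ and the extra factor reducing $\tfrac{1}{n-1}$ to $\tfrac{1}{n}$). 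This delicate passage is the technical heart of the proof and is where the full strength of the log-concavity hypothesis on $\rho_\nu$ is used a second time.
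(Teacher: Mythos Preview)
The paper does not prove this theorem at all: its ``proof'' is the single line ``We refer to \cite{bobsphere}.'' So there is nothing to compare your attempt against in this paper; the result is quoted from Bobkov.

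That said, your outline is the natural one and is in fact the skeleton of Bobkov's argument: decompose $\Var_\nu(f)$ into the variance of the radial average $\bar f$ plus the integrated spherical variance, and handle the first piece with Bobkov's one-dimensional bound $C_P(\rho_\nu)\le 12\Var(\xi)$. You have also correctly located the gap: the spherical Poincar\'e inequality only gives the \emph{weighted} estimate
\[
\int_0^\infty \Var_{d\theta}(f(r\cdot))\,\rho_\nu(r)\,dr \le \frac{1}{n-1}\int r^2|\nabla_T f|^2\,d\nu,
\]
and no pointwise argument converts this into the unweighted constant $\frac{1}{n}\E[\xi^2]$. This is a genuine obstruction, not a cosmetic one: the two quantities are not comparable for general $f$. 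The way Bobkov closes it is to expand $f$ in spherical harmonics, so that the angular part contributes a term $\lambda_k r^{-2}$ (with $\lambda_k=k(k+n-2)\ge n-1$) in the one-dimensional Dirichlet form for each radial coefficient, and then to prove a one-dimensional inequality for log-concave $\rho_\nu$ that trades this Hardy-type potential against $\E[\xi^2]$. Without that second one-dimensional step, the argument is incomplete; what you have written stops exactly where the real work begins.
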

\proof
We refer to \cite{bobsphere}.
\endproof

\begin{proposition}[Generalized Cauchy distributions]
The probability measure $d\mu(x)=\frac{1}{Z}\frac{1}{(1+|x|)^{(n+\alpha)}}$ on $\R^n$ with $\alpha>0$ verifies the weighted Poincar\'e inequality
\[\Var_\mu(f)\leq C_\textrm{opt}\int \left(1+|x|\right)^2|\nabla f|^2\,d\mu(x),\qquad \forall f.\]
where the optimal constant $C_\textrm{opt}$ is such that
\[\sum_{k=0}^{n-1} \frac{1}{(\alpha+k)^2} \leq C_\textrm{opt} \leq 14 \sum_{k=0}^{n-1} \frac{1}{(\alpha+k)^2} 
.\]
\end{proposition}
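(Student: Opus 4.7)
\textit{Proof proposal.} The plan is to apply the transportation method of Theorem~\ref{thm:transp-sph} with an explicit radial map engineered so that the resulting weight is exactly $1+|x|$, combined with Bobkov's Poincar\'e inequality (Theorem~\ref{bobsphere}) for the reference measure. The matching lower bound is obtained by testing the inequality on a specific radial function.

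I would take $\varphi(s)=e^{s}-1$, let $T(x)=\varphi(|x|)\,x/|x|$, and define $\nu$ as the unique spherically symmetric probability measure with $T\sharp\nu=\mu$. Since $\varphi$ is convex with $\varphi(0)=0$, the last part of Theorem~\ref{thm:transp-sph} immediately produces the weight $\omega(r)=\varphi'\circ\varphi^{-1}(r)=1+r$. A direct computation from $\rho_\nu(s)=\rho_\mu(\varphi(s))\varphi'(s)$ yields
$$\rho_\nu(s)=\frac{n\omega_n}{Z}(1-e^{-s})^{n-1}e^{-\alpha s}, \qquad s\geq 0,$$
and the log-derivative $(n-1)/(e^{s}-1)-\alpha$ of this density is decreasing, so $\rho_\nu$ is log-concave and Theorem~\ref{bobsphere} applies with $C_\nu=12\Var_\nu(|X|)+\tfrac{1}{n}E_\nu[|X|^2]$.

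The heart of the argument is the evaluation of these two moments. Since $\varphi^{-1}(r)=\log(1+r)$, the random variable $|X|$ under $\nu$ has the same distribution as $\log(1+R)$ with $R$ of density $\rho_\mu$, so $E_\nu[|X|^k]=\int_0^{\infty}\log(1+r)^{k}\rho_\mu(r)\,dr$. After the change of variables $u=1/(1+r)$ these integrals reduce, up to the normalizing factor $Z=n\omega_n B(\alpha,n)$, to $\alpha$-derivatives of the Beta function $B(\alpha,n)=\Gamma(\alpha)\Gamma(n)/\Gamma(n+\alpha)$, which are in turn computed via the classical identity $\psi(\alpha+n)-\psi(\alpha)=\sum_{k=0}^{n-1}(\alpha+k)^{-1}$. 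This gives
$$E_\nu[|X|]=\sum_{k=0}^{n-1}\frac{1}{\alpha+k},\qquad \Var_\nu(|X|)=S:=\sum_{k=0}^{n-1}\frac{1}{(\alpha+k)^2}.$$
Cauchy--Schwarz then yields $(E_\nu[|X|])^2\leq nS$, hence $E_\nu[|X|^2]\leq(n+1)S$ and
$C_\nu\leq 12S+\tfrac{n+1}{n}S\leq 14S$. Plugging this into Theorem~\ref{thm:transp-sph} gives $C_\textrm{opt}\leq 14 S$.

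For the lower bound I would test the weighted Poincar\'e inequality on $f(x)=\log(1+|x|)$. The same moment computation gives $\Var_\mu(f)=S$, while $|\nabla f(x)|^{2}=(1+|x|)^{-2}$ yields $\int(1+|x|)^{2}|\nabla f|^{2}\,d\mu=1$, forcing $C_\textrm{opt}\geq S$. The only nontrivial step in the whole plan is the moment computation for $\log(1+|X|)$ under $\mu$, but once the reduction to $\alpha$-derivatives of $B(\alpha,n)$ and the digamma identity are identified it becomes a routine calculation, and the exponential transport $\varphi(s)=e^{s}-1$ is precisely what transforms the heavy-tailed Cauchy radial density into a log-concave one.
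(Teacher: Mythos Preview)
Your argument is correct and follows the paper's route essentially verbatim: the same exponential transport $\varphi(s)=e^{s}-1$, the same computation of $\rho_\nu$ and verification of its log-concavity, the same reduction of the moments of $|X|$ under $\nu$ to logarithmic derivatives of $B(\alpha,n)$ (the paper writes this as $H(\alpha)=\int_0^1 u^{\alpha-1}(1-u)^{n-1}\,du$ and differentiates in $\alpha$), and the same Cauchy--Schwarz step $(\sum(\alpha+k)^{-1})^2\le nS$ to reach $C_\nu\le 14S$.

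The only genuine difference is in the lower bound: the paper tests on the bounded family $f_a(x)=(1+|x|)^{-a}$ and lets $a\to 0$, whereas you test directly on $f(x)=\log(1+|x|)$. Since $f_a=e^{-af}$, your choice is precisely the infinitesimal version of theirs, and it gives the same constant $S$ in one stroke; the price is that $\log(1+|x|)$ is unbounded, so strictly speaking you need a routine truncation to justify plugging it into the inequality. The paper's family avoids this technicality at the cost of a short Taylor expansion.
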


\begin{remark}
Note that, comparing to integrals, we have
$$
\frac{1}{\alpha^2} + \frac{n-1}{(\alpha+1)(\alpha + n)}
\leq
\sum_{k=0}^{n-1} \frac{1}{(\alpha+k)^2}
\leq
\frac{1}{\alpha^2} + \frac{n -1}{\alpha (\alpha+ n-1)} .
$$
Since $\alpha ^2 \sum_{k=0}^{n-1} \frac{1}{(\alpha+k)^2} \to n$ when $\alpha \to \infty$, applying the previous weighted Poincar\'e inequality
 to $g(\alpha x)$, making a change of variables, and letting $\alpha$ tend to infinity lead to
$$
\Var_\nu(f) \leq 14 n \int |\nabla f|^2\,d\nu
$$
with $d\nu(x)=(1/Z)e^{-|x|} dx$. Moreover, the optimal constant in the latter is certainly greater than $n$. This recover
(with 14 instead of 13) one particular result of Bobkov \cite{bobsphere}.
\end{remark}

\proof
Define $\psi(r)=\ln(1+r)$, $r>0$ and let $\nu$ be the image of $\mu$ under the radial map $S(x)=\psi(|x|)\frac{x}{|x|}$. Conversely, one has evidently that $\mu$ is the image of $\nu$ under the radial map $T(x)=\varphi(|x|)\frac{x}{|x|}$, with $\varphi(r)=\psi^{-1}(r)=e^r-1$ (which is convex). To apply Theorem \ref{thm:transp-sph}, one has to check that $\nu$ verifies Poincar\'e inequality. 

Elementary computations yield
\[\frac{d\nu}{dx}(x)=\frac{1}{Z} \left(\frac{e^{|x|}-1}{|x|}\right)^{n-1}e^{(1-n-\alpha)|x|}
\qquad\text{and}\qquad
\rho_\nu(r)=\frac{n\omega_n}{Z}\left(1-e^{-r}\right)^{n-1}e^{-\alpha r}\]
It is clear that $\log \rho_\nu$ is concave. So we may apply Theorem \ref{bobsphere} and conclude that $\nu$ verifies Poincar\'e inequality with the constant $C_\nu$ defined above.

Define \[H(\alpha)=\int_0^{+\infty} e^{- \alpha r}(1-e^{-r})^{n-1}\,dr=\int_0^1 u^{\alpha-1}(1-u)^{n-1}\,du.\]
Then $\int r\rho_\nu(r)\,dr=-\frac{H'(\alpha)}{H(\alpha)}$ and $\int r^2\rho_\nu(r)\,dr=\frac{H''(\alpha)}{H(\alpha)}$.
Integrations by parts yield
\[H(\alpha)=\frac{(n-1)!}{(\alpha + n -1)(\alpha + n -2)\cdots (\alpha)}.\]
So,
\[H'(\alpha)=-H(\alpha)\sum_{k=0}^{n-1} \frac{1}{\alpha + k}
\quad\text{and}\quad
 H''(\alpha)=H(\alpha)\left[\left(\sum_{k=0}^{n-1} \frac{1}{\alpha+k}\right)^2+\sum_{k=0}^{n-1} \frac{1}{(\alpha + k)^2}\right].\]
This gives, using Cauchy-Schwarz inequality 
\[C_\nu=13\sum_{k=0}^{n-1}\frac{1}{(\alpha + k)^2} + \frac{1}{n}\left(\sum_{k=0}^{n-1} \frac{1}{\alpha + k}\right)^2
\leq
14\sum_{k=0}^{n-1}\frac{1}{(\alpha + k)^2}
.\]

Now, suppose that there is some constant $C$ such that the inequality $\Var_\mu(f)\leq C \int (1+|x|)^2|\nabla f|^2\,d\mu$ holds for all $f$. We want to prove that $C\geq \sum_{k=0}^{n-1} \frac{1}{(\alpha+k)^2}$. To do so let us test this inequality on the functions $f_a(x)=\frac{1}{(1+|x|)^a}$, $a>0$. Defining $F(r)=\int \frac{1}{(1+|x|)^{n+r}}\,dr$, for all $r>0$, one obtains immediately
\[C\geq \frac{1}{a^2}\frac{F(2a+\alpha)F(\alpha)-F(a+\alpha)^2}{F(\alpha)F(2a+\alpha)}.\]
But a Taylor expansion easily shows that the right hand side goes to $K=\frac{F''(\alpha)}{F(\alpha)}-\left(\frac{F'(\alpha)}{F(\alpha)}\right)^2$, so $C\geq K.$
Easy computations give that $F(\alpha)= n\omega_nH(\alpha)$ and so $K=\sum_{k=0}^{n-1}\frac{1}{(\alpha+k)^2}$.
\endproof

\begin{proposition}[Sub-exponential laws]
The probability measure $d\mu(x)=\frac{1}{Z}e^{-|x|^p}\,dx$ on $\R^n$ with $p\in(0,1)$ verifies the weighted Poincar\'e inequality
\[\Var_\mu(f)\leq C_{\textrm{opt}}\int |\nabla f|^2 |x|^{2(1-p)}\,d\mu(x),\]
where the optimal constant $C_{\textrm{opt}}$ is such that
\[\frac{n}{p^3}\leq C_{\textrm{opt}}\leq 12\frac{n}{p^3}+\frac{n+p}{p^4}.\]
\end{proposition}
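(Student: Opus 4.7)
The plan is to repeat the transportation scheme used in the previous Cauchy proposition, now with a power transform tailored to the exponent $p$. I take $\varphi(r)=r^{1/p}$, which is convex on $\R^+$ since $1/p>1$, strictly increasing, and satisfies $\varphi(0)=0$. Thus Theorem \ref{thm:transp-sph} applies with the radial map $T(x)=\varphi(|x|)\,x/|x|$, and because $\varphi$ is convex the weight in that theorem collapses to $\omega(r)=\varphi'\circ\varphi^{-1}(r)=\frac{1}{p}\,r^{1-p}$, which gives precisely the announced weight $|x|^{2(1-p)}$ up to the scalar $1/p^2$ that will be absorbed into the constant.

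Next, I identify the measure $\nu$ with $T\sharp\nu=\mu$ and show that it satisfies a sharp Poincar\'e inequality. A one-line change of variable in radial coordinates yields $\rho_\nu(r)=\rho_\mu(\varphi(r))\varphi'(r)\propto r^{n/p-1}e^{-r}$, i.e. a Gamma law of shape $n/p$ and rate $1$. Since $n/p>1$, this density is log-concave on $\R^+$, so Theorem \ref{bobsphere} produces a Poincar\'e inequality for $\nu$ with constant
\[
C_\nu \;=\; 12\,\mathrm{Var}(|Y|)+\frac{1}{n}\,\mathbb{E}[|Y|^2] \;=\; 12\cdot\frac{n}{p}+\frac{1}{n}\!\left(\frac{n}{p}+\frac{n^2}{p^2}\right) \;=\; \frac{12n}{p}+\frac{n+p}{p^2},
\]
using the standard Gamma moments $\mathbb{E}[|Y|]=n/p$ and $\mathbb{E}[|Y|^2]=n/p+n^2/p^2$. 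Feeding $C_\nu$ back into Theorem \ref{thm:transp-sph} and multiplying by $1/p^2$ (from $\omega^2$) gives the stated upper bound $C_{\mathrm{opt}}\le 12n/p^3+(n+p)/p^4$.

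For the matching lower bound, I would simply test the inequality with $f(x)=|x|^p$. Setting $I_b:=\int|x|^b\,d\mu=\Gamma((n+b)/p)/\Gamma(n/p)$ one gets $\mathrm{Var}_\mu(f)=I_{2p}-I_p^2=(n/p)(n/p+1)-(n/p)^2=n/p$, while $|\nabla f|^2=p^2|x|^{2(p-1)}$ makes the weighted Dirichlet form collapse to $\int p^2\,d\mu=p^2$. The resulting ratio is exactly $n/p^3$, which therefore lower-bounds $C_{\mathrm{opt}}$.

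There is really no serious obstacle here: the only genuine choice is $\varphi(r)=r^{1/p}$, forced by matching $e^{-|x|^p}$ against the exponential profile that Bobkov's spherical criterion can handle, and everything downstream -- the log-concavity of the Gamma density, its moments, and the sharp test function $|x|^p$ -- is routine. The most delicate point is simply noting that the Gamma shape $n/p$ must exceed $1$, which is automatic since $p<1\le n$.
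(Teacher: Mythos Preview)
Your proof is correct and follows the same transportation strategy as the paper; the only differences are cosmetic. You take $\varphi(r)=r^{1/p}$ whereas the paper takes $\varphi(r)=(pr)^{1/p}$, so your radial law is $\mathrm{Gamma}(n/p,1)$ instead of their $\mathrm{Gamma}(n/p,p)$ and the factor $1/p^2$ migrates from $C_\nu$ into the weight $\omega^2$ --- the product is of course the same. For the lower bound you test directly with $f(x)=|x|^p$, which is exactly the limiting test function the paper obtains by considering $f_a(x)=e^{-a|x|^p}$ and sending $a\to 0$; your route is slightly more direct and gives the same $n/p^3$.
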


\begin{remark}
As for the Cauchy law, letting $p$ go to 1 leads to
$$
\Var_\nu(f) \leq (13 n +1) \int |\nabla f|^2\,d\nu
$$
with $d\nu(x)=(1/Z)e^{-|x|} dx$. Again this recover
(with $13 n+1$ instead of $13n$) one particular result of Bobkov \cite{bobsphere}.
\end{remark}

\proof
We mimic the proof of the preceding example. Let $\psi(r)=\frac{1}{p}r^{p}$, $r\geq 0$ and define $\nu$ as the image of $\mu$ under the radial map $S(x)=\psi(|x|)\frac{x}{|x|}.$ Easy calculations give that the radial part of $\nu$ has density $\rho_\nu$ defined by
\[\rho_\nu(r)=\frac{n\omega_n}{Z}\left(\beta u\right)^{\frac{n-p}{p}}e^{-p u}.\]
It is clearly a log-concave function on $[0,+\infty).$ Let us compute the constant $C_\nu$ appearing in Theorem \ref{bobsphere}. One has
\[\int r\rho_\nu(r)\,dr=\frac{1}{p}\frac{\Gamma(\frac{n}{p}+1)}{\Gamma(\frac{n}{p})}=\frac{n}{p^2},\]
and
\[\int r^2\rho_\nu(r)\,dr=\frac{1}{p^2}\frac{\Gamma(\frac{n}{p}+2)}{\Gamma(\frac{n}{p})}=\frac{n(n+p)}{p^4}.\]
Consequently,
\[C_\nu=12\frac{n}{p^3}+\frac{n+p}{p^4}.\]
Now suppose that there is some $C$ such that $\Var_\mu(f)\leq C\int |\nabla f|^2|x|^{2(1-p)}\,d\mu(x)$ holds for all $f$. To prove that $C\geq \frac{n}{p^3}$, we will test this inequality on the functions $f_a(x)=e^{-a|x|^p}$, $a>0$.
Letting $G(t)=\int e^{-t|x|^p}\,d\mu(x)$, we arrive at the relation
\[C\geq \frac{1}{\beta^2a^2}\frac{G(1)G(2a+1)-G(a+1)^2}{G(1)G(2a+1)}, \qquad \forall a>0.\]
Letting $a\to0$, one obtains $C\geq \frac{1}{p^2}\left[\frac{G''(1)}{G(1)}-\left(\frac{G'(1)}{G(1)}\right)^2\right].$
The change of variable formula immediately yields $G(t)=t^{-\frac{n}{p}}G(1)$, and so $C\geq \frac{1}{p^2}\left[\frac{n(n+p)}{p^2}-\left(\frac{n}{p}\right)^2\right],$ which achieves the proof.
\endproof

\section{Links with weak Poincar\'e inequalities.} \label{sec:linkswp}

In this section we deal with weak Poincar\'e inequalities and work under the general setting of Section \ref{sec2}.
One says that a probability measure $\mu$ verifies the weak Poincar\'e inequality if for all $f\in \mathcal{A}$,
$$\Var_\mu(f) \, \leq \, \beta(s) \, \int \, \Gamma(f) \, d\mu \, + \,  \, s \,
\Osc_\mu(f)^2,\qquad \forall s\in (0,1/4),$$
where $\beta:(0,1/4)\to\R^+$ is a non-increasing function. Note that the limitation $s \in (0,1/4)$ comes from the bound
$\Var_\mu(f) \leq \Osc_\mu(f)^2/4$.

Weak Poincar\'e inequalities were introduced by R\"{o}ckner and Wang in \cite{r-w}.
In the symmetric case, they describe the decay of the semi-group $P_t$ associated to $L$ (see \cite{r-w,BCG}). Namely for all bounded centered function $f$, there exists $\psi(t)$ tending to zero at infinity such that $\| P_tf\|_{L_2(\mu)}\le \psi(t) \|f\|_\infty$.

They found another application in concentration of measure phenomenon for sub-exponential laws in \cite[Thm 5.1]{BCR2}. The approach proposed in \cite{BCR2} to derive weak Poincar\'e inequalities was based on capacity-measure arguments (following \cite{barthe-roberto}). In this section, we give alternative arguments. One is based on converse Poincar\'e inequalities.
This implies that weak Poincar\'e inequalities can be derived directly from the $\phi$-Lyapunov function strategy, using Theorem \ref{thmcwp}.
The second approach is based on a direct implication of weak Poincar\'e inequalities from weak Cheeger inequalities.
In turn, one can use either (the mass-transport technique of) Theorem \ref{th:weakcheeger} in order to get precise bounds for measures
on $\mathbb{R}^n$ which are tensor product of a measure on $\mathbb{R}$, or (via $\phi$-Lyapunov functions) Theorem \ref{thmweakC}.


Converse Poincar\'e inequalities imply weak Poincar\'e inequalities as shown in the following Theorem.
\begin{theorem}\label{thmweakP}
Assume that $\mu$ satisfies a converse Poincar\'e inequality $$\inf_c \, \int (g-c)^2 \, \omega \,
d\mu \le C \, \int \, \Gamma(g) \, d\mu \, $$ for some non-negative weight $\omega$, such
that $\bar{\omega}=\int \omega d\mu < +\infty$. Define $F(u)= \mu(\omega < u)$ and
$G(s)=F^{-1}(s):= \inf \{u ; \mu(\omega \leq u) > s\}$ for $s< 1$.

Then, for all $f\in \mathcal{A}$,
$$\Var_\mu(f) \, \leq \, \frac{C}{G(s)} \, \int \, \Gamma(f) \, d\mu \, + \,  \, s \Osc_\mu(f)^2,\qquad \forall s\in (0,1/4) .$$
\end{theorem}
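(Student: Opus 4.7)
The plan is to mimic the argument used for Theorem \ref{thmweakC}, replacing $L^1$ quantities by $L^2$ ones and using the $\Osc^2$ bound instead of $\Osc$.

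First I would fix $u > 0$ and split the integral defining the variance according to whether $\omega$ is large or small. For any constant $c$,
\begin{equation*}
\Var_\mu(f) \, \leq \, \int (f-c)^2 \, d\mu \, = \, \int_{\{\omega \ge u\}} (f-c)^2 \, d\mu \, + \, \int_{\{\omega<u\}} (f-c)^2 \, d\mu.
\end{equation*}
On $\{\omega \ge u\}$ we have $1 \le \omega/u$, so the first piece is bounded by $u^{-1} \int (f-c)^2 \, \omega \, d\mu$. For the second piece I would choose the optimal $c$ in the converse Poincar\'e inequality, namely $c^\star = (\int f \omega\, d\mu)/\bar\omega$, which automatically lies in $[\mathrm{ess\,inf}\,f,\mathrm{ess\,sup}\,f]$, so that $(f-c^\star)^2 \le \Osc_\mu(f)^2$ almost surely. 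This gives
\begin{equation*}
\int_{\{\omega<u\}} (f-c^\star)^2\, d\mu \, \leq \, \Osc_\mu(f)^2 \, F(u).
\end{equation*}

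Then, applying the converse Poincar\'e hypothesis to the first term, I get
\begin{equation*}
\Var_\mu(f) \, \leq \, \frac{1}{u}\, \inf_c \int (f-c)^2\, \omega\, d\mu \, + \, \Osc_\mu(f)^2\, F(u) \, \leq \, \frac{C}{u}\, \int \Gamma(f)\, d\mu \, + \, \Osc_\mu(f)^2 \, F(u).
\end{equation*}
Finally, parametrising by $s = F(u)$, i.e.\ $u = G(s)$, and restricting to $s \in (0,1/4)$ (which is the only interesting range since $\Var_\mu(f) \le \Osc_\mu(f)^2/4$ always) delivers the announced weak Poincar\'e inequality.

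I do not anticipate any real obstacle: the only slightly delicate point is justifying that one may take the minimiser $c^\star$ of the converse Poincar\'e inequality inside the interval $[\mathrm{ess\,inf}\,f,\mathrm{ess\,sup}\,f]$, so that the pointwise bound $(f-c^\star)^2 \le \Osc_\mu(f)^2$ applies on $\{\omega<u\}$; but this is immediate from the explicit weighted-mean form of $c^\star$. Everything else is the same book-keeping as in the proof of Theorem \ref{thmweakC}, with squares instead of absolute values.
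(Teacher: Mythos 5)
Your proposal is correct and follows essentially the same route as the paper, which simply notes that the proof mirrors that of Theorem \ref{thmweakC}: split the integral over $\{\omega\ge u\}$ and $\{\omega<u\}$, use $1\le\omega/u$ on the first set and the pointwise bound $(f-c^\star)^2\le\Osc_\mu(f)^2$ on the second, then set $u=G(s)$ so that $F(G(s))\le s$. Your observation that the weighted mean $c^\star=\bigl(\int f\,\omega\,d\mu\bigr)/\bar\omega$ lies in the essential range of $f$ correctly handles the only delicate point.
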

\begin{proof}
The proof follows the same line of reasoning as the one of Theorem \ref{thmweakC}.
\end{proof}


Weak Poincar\'e inequalities are also implied by weak Cheeger inequalities as stated in the following Lemma.
The proof of the Lemma is a little bit more tricky than the usual one from Cheeger to
Poincar\'e. We give it for completeness.

\begin{lemma} \label{lem:weak}
Let $\mu$ be a probability measure and $\beta : \R^+ \to \R^+$. Assume that for any $f \in \mathcal{A}$ it holds
$$
\int |f - m|\, d\mu \leq \beta(s) \int \sqrt{\Gamma (f)}\, d\mu + s \Osc (f) \qquad \forall s \in (0,1)
$$
where $m$ is a median of $f$ under $\mu$. Then, any $f \in \mathcal{A}$
satisfies
\begin{equation} \label{eq:weakpoincare}
 \mathrm{Var}_\mu (f) \leq 4 \beta\left( \frac{s}{2} \right)^2 \int \Gamma(f) d\mu + s \Osc (f)^2
\qquad \forall s \in (0,1/4) .
\end{equation}
\end{lemma}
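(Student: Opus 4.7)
The idea is to apply the weak Cheeger hypothesis to the auxiliary function $G := (f-m)|f-m|$, an odd $C^1$ rearrangement of $g := f-m$ tailored so that $|G| = g^2$ while $\sqrt{\Gamma(G)} = 2|g|\sqrt{\Gamma(f)}$. This one-shot application avoids the weaker constants one would get by splitting $f-m$ into positive and negative parts and squaring each piece separately.

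Set $g := f-m$ and $G := g|g|$. Since $m$ is a $\mu$-median of $f$, one has $\mu(g \le 0) \ge 1/2$ and $\mu(g \ge 0) \ge 1/2$, so $0$ is a median of $G$. If $g$ takes values in $[a,b]$ with $a \le 0 \le b$, then $G$ takes values in $[-a^2,b^2]$, so $\Osc(G) = a^2 + b^2 \le (b-a)^2 = \Osc(f)^2$ (using $ab \le 0$). Applying the weak Cheeger inequality to $G$ with the parameter $s/2 \in (0,1/8)$ (which is admissible since $s \in (0,1/4)$), together with the diffusion chain rule $\sqrt{\Gamma(G)} = 2|g|\sqrt{\Gamma(f)}$ and the Cauchy--Schwarz inequality, yields, with $A := \int g^2\, d\mu$ and $B := \int \Gamma(f)\, d\mu$,
$$
A \;=\; \int |G|\, d\mu \;\le\; 2\beta(s/2) \int |g|\, \sqrt{\Gamma(f)}\, d\mu + \tfrac{s}{2}\, \Osc(f)^2 \;\le\; 2\beta(s/2)\,\sqrt{AB} + \tfrac{s}{2}\,\Osc(f)^2.
$$
The weighted arithmetic--geometric mean inequality $2\sqrt{AB} \le \lambda A + B/\lambda$ applied with $\lambda = 1/(2\beta(s/2))$ absorbs an $A/2$ contribution into the left-hand side and gives
$$
A \;\le\; 4\beta(s/2)^2\, B + s\, \Osc(f)^2,
$$
which is the claimed weak Poincar\'e inequality since $\Var_\mu(f) \le A$.

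The main technical point is the justification of applying the weak Cheeger inequality to $G$, as $\Phi(x) := x|x|$ is only $C^1$ (with $\Phi'(x) = 2|x|$), so $G$ is not a priori in the algebra $\mathcal{A}$. I would handle this by approximation: replace $\Phi$ by a sequence of smooth odd functions $\Phi_n$ with $\Phi_n \to \Phi$ and $\Phi_n' \to 2|\cdot|$ uniformly on compacts, apply the weak Cheeger hypothesis to $\Phi_n(g)$ using the chain rule $\sqrt{\Gamma(\Phi_n(g))} = |\Phi_n'(g)|\sqrt{\Gamma(g)}$, and pass to the limit by dominated convergence after a preliminary truncation to reduce to bounded $f$.
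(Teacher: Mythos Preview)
Your argument is correct and follows essentially the same strategy as the paper's proof: apply the weak Cheeger hypothesis to a ``square'' of $g=f-m$, use the chain rule and Cauchy--Schwarz to get an inequality of the form $A\le 2\beta\sqrt{AB}+\text{(osc term)}$, and then solve for $A$. The paper solves the quadratic in $\sqrt{A}$ directly, whereas you use Young's inequality with $\lambda=1/(2\beta(s/2))$; both routes give the constant $4$.

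The genuine difference is your choice of auxiliary function. The paper applies the hypothesis to $f^2$ and writes $\int f^2\,d\mu \le \int |f^2-m|\,d\mu$ with $m$ a median of $f^2$; this step is in fact not valid in general (for instance, take $\mu$ uniform on $[0,1]$ and $f(x)=x-\tfrac12$, so that $\int f^2=1/12$ while $\int |f^2-m|=1/16$). Your use of the odd function $G=g|g|$ sidesteps the issue entirely: since the sign of $G$ equals the sign of $g$, the value $0$ is automatically a $\mu$-median of $G$, so the weak Cheeger inequality gives directly $\int g^2\,d\mu=\int|G-0|\,d\mu\le\cdots$, and your bound $\Osc(G)=a^2+b^2\le(b-a)^2=\Osc(f)^2$ (using $ab\le0$) handles the oscillation term cleanly. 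In short, your version is not only correct but also repairs the paper's argument while achieving the same constants.
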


\begin{proof}
let $f \in \mathcal{A}$. Assume that $0$ is a median of $f$ and by homogeneity
of \eqref{eq:weakpoincare} that $\Osc (f) = 1$ (which implies in turn that $\|f\|_\infty \leq 1$).
Let $m$ be a median of $f^2$. Applying the weak Cheeger inequality to $f^2$, using the definition of the median
and the chain rule formula, we obtain
$$
\int f^2\, d\mu \leq \int |f^2 - m|\, d\mu \leq 2 \beta(s) \int |f| \sqrt{\Gamma(f)}\, d\mu + s \Osc (f^2)
\qquad \forall s\in (0,1) .
$$
Since $\|f\|_\infty \leq 1$ and $\Osc (f)=1$, one has  $\Osc (f^2) \leq 2$.
 Hence, by the Cauchy-Schwarz inequality, we have
$$
\int f^2\, d\mu
 \leq
2 \beta(s) \left( \int \Gamma(f)\, d\mu \right)^\frac{1}{2} \left( \int |f|^2\, d\mu
\right)^\frac{1}{2}  + 2s \qquad \forall s\in (0,1)  .
$$
Hence,
$$
\left( \int f^2\, d\mu \right)^\frac{1}{2}   \leq \beta(s) \left( \int \Gamma(f)\, d\mu
\right)^\frac{1}{2} + \left( \beta(s)^2 \int \Gamma(f)\, d\mu + s \right)^\frac{1}{2} .
$$
Since $\mathrm{Var}_\mu(f) \leq \int f^2\, d\mu$, we finally get
$$
\mathrm{Var}_\mu(f) \leq 4 \beta(s)^2 \int \Gamma(f)\, d\mu + 2s \qquad \forall s\in (0,1)
$$
which is the expected result.
\end{proof}

Two examples follow.

\begin{proposition}[Cauchy type laws]\label{prop:cauchyweakP}
Let $d\mu(x)= V^{-(n+\alpha)}(x) dx$ with $V$ convex on $\mathbb{R}^n$ and $\alpha>0$. Recall that $\kappa=-1/\alpha$.
Then there exists a constant $C>0$ such that for all smooth enough $f : \mathbb{R}^n \to \mathbb{R}$,
$$
\Var_\mu(f) \, \leq \, C s^{2\kappa} \, \int \, |\nabla f|^2 \, d\mu \, + \,  \, s \Osc_\mu(f)^2,\qquad \forall s\in (0,1/4) .
$$
\end{proposition}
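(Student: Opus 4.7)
The cleanest route is to combine the weak Cheeger inequality already obtained for these laws with the general $L^1\Rightarrow L^2$ upgrade provided by Lemma~\ref{lem:weak}. Concretely, Proposition~\ref{prop:convcauchy} (second formulation) states that for some constant $C_1 > 0$ and every smooth $g$ with $\mu$-median $0$,
\[
\int |g|\, d\mu \leq C_1 s^{\kappa} \int |\nabla g|\, d\mu + s\, \Osc_\mu(g), \qquad \forall s \in (0,1/2),
\]
which is exactly the weak Cheeger hypothesis of Lemma~\ref{lem:weak} with $\beta(s)=C_1 s^\kappa$. The plan is simply to plug this $\beta$ into Lemma~\ref{lem:weak}.

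Applying the lemma to any $f \in \mathcal{A}$ gives
\[
\Var_\mu(f) \leq 4\,\beta(s/2)^2 \int |\nabla f|^2\, d\mu + s\, \Osc_\mu(f)^2
= 4 C_1^2\, 2^{-2\kappa}\, s^{2\kappa}\int |\nabla f|^2\,d\mu + s\,\Osc_\mu(f)^2,
\]
valid for $s\in(0,1/4)$. Setting $C = 4 C_1^2 2^{-2\kappa}$ concludes the argument.

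There is essentially no obstacle here, since the bulk of the work was already done in establishing Proposition~\ref{prop:convcauchy} (via the $\phi$-Lyapunov function $W(x)=|x|^k$ of Lemma~\ref{lem:convex} and Theorem~\ref{thmweakC}); the present statement is a direct consequence. As an alternative check, one could instead start from the converse Poincaré inequality of the earlier Cauchy-type proposition in Section~\ref{secconverse}, namely $\inf_c \int (g-c)^2 (1+|x|^2)^{-1} d\mu \leq C_2 \int |\nabla g|^2 d\mu$, feed it into Theorem~\ref{thmweakP} with weight $\omega(x)=(1+|x|^2)^{-1}$, and compute $F(u)=\mu(\omega<u)=\mu(|x|^2 > 1/u - 1)$; the tail bound $\mu(|x|>R)\le c R^{-\alpha}$ (which follows from $V(x)\ge \rho|x|$ at infinity, cf.\ Lemma~\ref{lemfranck}, and polar coordinates) yields $F(u)\le c' u^{\alpha/2}$, hence $G(s)\ge c'' s^{2/\alpha} = c'' s^{-2\kappa}$, producing the same inequality up to constants. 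Both routes give the prescribed exponent $2\kappa$ on $s$, showing that the weight $(1+|x|^2)$ appearing in the converse Poincaré inequality is precisely what dictates the polynomial rate in the weak Poincaré inequality.
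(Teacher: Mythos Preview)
Your proof is correct and follows exactly the paper's approach: the paper's own proof reads simply ``The proof is a direct consequence of Proposition~\ref{prop:convcauchy} together with Lemma~\ref{lem:weak} above,'' which is precisely your main argument. Your additional alternative route via Theorem~\ref{thmweakP} and the converse Poincar\'e inequality is a valid and illuminating sanity check (and is indeed the other general strategy mentioned in Section~\ref{sec:linkswp}); one minor slip is that the weak Cheeger inequality you quote is the \emph{first} formulation of Proposition~\ref{prop:convcauchy}, not the second.
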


\begin{proof}
The proof is a direct consequence of Proposition \ref{prop:convcauchy} together with Lemma \ref{lem:weak} above.
\end{proof}

\begin{remark}
For the generalized Cauchy distribution $d\mu(x)=c_\beta \, (1+|x|)^{-(n+\alpha)}$,
this result is optimal for $n=1$ and was shown in \cite{r-w} (see also \cite[Example 2.5]{BCR2}). For $n\geq 2$
the result obtained in \cite{r-w} is no more optimal. In \cite{BCG}, a weak Poincar\'e inequality is proved
in any dimension with rate function $\beta(s) \leq c(p) \, s^{2 p}$ for any $p<\kappa$.
Here we finally get the optimal rate. Note however that the constant $C$ may depend on $n$.
\hfill $\diamondsuit$
\end{remark}

\begin{proposition}[Sub exponential type laws] \label{prop:expweakP}
Let $d \mu= (1/Z_p) \, e^{- V^p}$
for some positive convex function $V$ on $\mathbb{R}^n$ and $p \in (0,1)$.
Then there exists $C > 0$ such that for all $f$
$$
\Var_{\mu}(f) \, \leq \, C \left( \log\left( \frac{1}{s} \right) \right)^{2(\frac{1}{p}-1)} \, \int \, |\nabla f|^2  \, d\mu \,
+ \,  \, s \Osc_{\mu}(f)^2,\qquad \forall s\in (0,1/4) .
$$
\end{proposition}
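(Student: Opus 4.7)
The plan is to derive this weak Poincar\'e inequality by combining the weak Cheeger inequality already established for $\mu$ in Proposition~\ref{prop:weakexp} with the general implication ``weak Cheeger $\Rightarrow$ weak Poincar\'e'' provided by Lemma~\ref{lem:weak}. This is precisely the route used for the Cauchy case in Proposition~\ref{prop:cauchyweakP}, so the argument will be entirely parallel.

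More concretely, Proposition~\ref{prop:weakexp} states that there exists $C>1$ such that for every $f$ with $\mu$-median $0$,
$$
\int|f|\,d\mu\leq C\,\log^{\,1/p-1}(C/s)\int|\nabla f|\,d\mu\,+\,s\,\Osc_\mu(f),\qquad s\in(0,1).
$$
Thus the assumption of Lemma~\ref{lem:weak} holds with $\beta(s)=C\log^{\,1/p-1}(C/s)$. Applying that lemma yields
$$
\Var_\mu(f)\leq 4\,\beta(s/2)^2\int|\nabla f|^2\,d\mu\,+\,s\,\Osc_\mu(f)^2,\qquad s\in(0,1/4).
$$

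It remains to simplify the prefactor. We have $4\beta(s/2)^2=4C^2\bigl(\log(2C/s)\bigr)^{2(1/p-1)}$, and since $p\in(0,1)$ the exponent $2(1/p-1)$ is strictly positive. For $s\in(0,1/4)$ we have $\log(1/s)\geq\log 4>0$, and $\log(2C/s)=\log(2C)+\log(1/s)\leq K\log(1/s)$ with $K:=1+\log(2C)/\log 4$. Therefore
$$
4\,\beta(s/2)^2\,\leq\,4C^2K^{\,2(1/p-1)}\bigl(\log(1/s)\bigr)^{2(1/p-1)},
$$
which yields the announced inequality with $C':=4C^2 K^{\,2(1/p-1)}$.

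There is no real obstacle: the work has been done upstream (the weak Cheeger estimate is the substantial input, itself obtained by the $\phi$-Lyapunov/converse Cheeger machinery in Proposition~\ref{propconv3} combined with Theorem~\ref{thmweakC}), and Lemma~\ref{lem:weak} is a general device. The only bookkeeping is the comparison of $\log(2C/s)$ with $\log(1/s)$ on the range $s\in(0,1/4)$, which is immediate. Note that, as in Proposition~\ref{prop:cauchyweakP}, the constant $C'$ is not explicit in the dimension $n$, since it inherits the non-explicit dependence from the local Cheeger inequality used in deriving Proposition~\ref{prop:weakexp} (via the $\phi$-Lyapunov function of Lemma~\ref{lem:convex2}).
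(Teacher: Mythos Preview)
The proposal is correct and follows exactly the paper's approach: the paper's proof is the single sentence ``The proof is a direct consequence of Proposition~\ref{prop:weakexp} together with Lemma~\ref{lem:weak} above.'' Your additional bookkeeping comparing $\log(2C/s)$ with $\log(1/s)$ on $(0,1/4)$ just makes explicit what the paper leaves implicit.
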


\begin{proof}
The proof is a direct consequence of Proposition \ref{prop:weakexp} together with Lemma \ref{lem:weak} above.
\end{proof}

By Lemma \ref{lem:weak} above, we see that weak Poincar\'e inequalities can be derived from mass-transport arguments using Theorem \ref{th:weakcheeger}.
This is stated in the next Corollary.

\begin{corollary} \label{cor:weakpoincare}
Let $\mu$ be a symmetric probability measure on $\R$ absolutely continuous with respect to the
Lebesgue measure. Assume that $\log \bF$ is convex on $\R^+$. Then, for any $n$, every function
$f: \R^n \to \R$ smooth enough satisfies
\begin{equation} \label{eq:weakpoincare2}
\Var_{\mu^n}(f) \leq \kappa_1^2\frac{s^2}{J_\mu(s/2)^2}\int |\nabla f|\, d\mu^n + 2\kappa_2ns \Osc(f)^2, \qquad \forall s>0.
\end{equation}
with $\kappa_1=2\sqrt{6}$ and $\kappa_2=2(1+2\sqrt{6})$.
\end{corollary}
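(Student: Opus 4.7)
The plan is to rerun the proof of Lemma~\ref{lem:weak} with the weak Cheeger inequality of Theorem~\ref{th:weakcheeger} as input, being careful to track the extra $\kappa_2 n$ factor sitting in front of the $\Osc(f)$ term (it is not just ``$s$'' as in Lemma~\ref{lem:weak}) and bypassing the ``median of $f^2$'' awkwardness by a split into $f_+$ and $f_-$.

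By translation and homogeneity I reduce to the case where $f$ has $\mu^n$-median $0$ and $\Osc(f)=1$; then $\|f\|_\infty\le 1$, and crucially $0$ is a median of $f_+^2$ and of $f_-^2$ (since $\mu^n(\pm f\le 0)\ge 1/2$), so that $\int f_\pm^2\,d\mu^n=\int|f_\pm^2-0|\,d\mu^n$. Applying Theorem~\ref{th:weakcheeger} separately to $f_+^2$ and $f_-^2$ at parameter $u\in(0,1/2)$, together with $|\nabla f_\pm^2|=2f_\pm|\nabla f|\mathds{1}_{\pm f\ge 0}$ and $\Osc(f_\pm^2)\le 1$, yields
\[
\int f_\pm^2\,d\mu^n \,\le\, 2\kappa_1\frac{u}{J_\mu(u)}\int f_\pm|\nabla f|\mathds{1}_{\pm f\ge 0}\,d\mu^n + \kappa_2 nu.
\]
Cauchy--Schwarz on the first term gives $\int f_\pm |\nabla f|\mathds{1}_{\pm f\ge 0}\le(\int f_\pm^2)^{1/2}(\int|\nabla f|^2\mathds{1}_{\pm f\ge 0})^{1/2}$, after which solving the resulting quadratic in $(\int f_\pm^2\,d\mu^n)^{1/2}$ and using $(a+b)^2\le 2a^2+2b^2$ produces
\[
\int f_\pm^2\,d\mu^n \,\le\, 4\kappa_1^2\frac{u^2}{J_\mu(u)^2}\int|\nabla f|^2\mathds{1}_{\pm f\ge 0}\,d\mu^n + 2\kappa_2 nu.
\]

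Summing the $+$ and $-$ estimates and using $\Var_{\mu^n}(f)\le \int f^2\,d\mu^n=\int f_+^2\,d\mu^n+\int f_-^2\,d\mu^n$ leads to
\[
\Var_{\mu^n}(f)\,\le\, 4\kappa_1^2\frac{u^2}{J_\mu(u)^2}\int|\nabla f|^2\,d\mu^n + 4\kappa_2 nu.
\]
Substituting $u=s/2$ and restoring the $\Osc(f)^2$ factor that had been normalized to $1$ gives precisely the advertised coefficients $\kappa_1^2 s^2/J_\mu(s/2)^2$ on the gradient term and $2\kappa_2 ns$ on the oscillation term. The constraint $u\in(0,1/2)$ becomes $s\in(0,1)$, which is harmless because for larger $s$ the inequality is anyway trivial thanks to $\Var_{\mu^n}(f)\le \Osc(f)^2/4$. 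The only mildly delicate point is the $f_\pm^2$ decomposition that secures $0$ as a median and hence lets the $\mathbb{L}^1\Rightarrow\mathbb{L}^2$ passage go through cleanly; everything else is Cauchy--Schwarz and a quadratic inequality.
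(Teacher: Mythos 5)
Your proof is correct and lands exactly on the stated constants, and its skeleton (apply the $\mathbb{L}^1$ inequality at the level of the squared function, chain rule, Cauchy--Schwarz, solve the quadratic, substitute $s\mapsto s/2$) is the same as the paper's, which simply invokes Lemma \ref{lem:weak} together with Theorem \ref{th:weakcheeger}. Two of your deviations are worth noting. First, you keep the gradient coefficient $\kappa_1 u/J_\mu(u)$ and the oscillation coefficient $\kappa_2 n u$ as two separate parameters throughout; this is actually necessary to obtain \eqref{eq:weakpoincare2} as stated, since a literal application of Lemma \ref{lem:weak} (whose hypothesis carries oscillation coefficient exactly $s$) would force the reparametrization $s\mapsto s/(\kappa_2 n)$ and yield $J_\mu\bigl(s/(2\kappa_2 n)\bigr)$ in the denominator instead of $J_\mu(s/2)$. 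Second, you replace the lemma's use of a median $m$ of $f^2$ --- where the step $\int f^2\,d\mu\le\int|f^2-m|\,d\mu$ is not justified in general (it already fails for $f$ close to $\mathds{1}_A-\mathds{1}_{A^c}$ with $\mu^n(A)=1/2$) --- by the $f_\pm^2$ decomposition, for which $0$ genuinely is a median; this is the same device the paper itself uses in Corollary \ref{cor}, and it makes the $\mathbb{L}^1\Rightarrow\mathbb{L}^2$ passage airtight. Two cosmetic points: the gradient term in \eqref{eq:weakpoincare2} should of course read $\int|\nabla f|^2\,d\mu^n$, which is what your argument produces; and when summing the two bounds one should observe that $\nabla f=0$ a.e.\ on $\{f=0\}$, so that $\int|\nabla f|^2\mathds{1}_{f\ge0}\,d\mu^n+\int|\nabla f|^2\mathds{1}_{f\le0}\,d\mu^n=\int|\nabla f|^2\,d\mu^n$.
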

\proof
Applying Lemma \ref{lem:weak} to $\mu^n$ together with Theorem \ref{th:weakcheeger} immediately yields the result.
\endproof

We illustrate this Corollary on two examples.

\begin{proposition}[Cauchy distributions] \label{prop:cauchyweakP2}
Consider $d\mu(x)= \frac{\alpha}{2(1+|x|)^{1+\alpha}}\,dx$ on $\R$, with $\alpha>0$. Then,
there is a constant $C$ depending only on $\alpha$ such that for all $n\geq 1$
\[
\Var_{\mu^n}(f)\leq C \left( \frac{n}{s} \right)^\frac{2}{\alpha} \int |\nabla f|^2\,d\mu^n
+ s\Osc_{\mu^n}(f)^2,\qquad \forall s\in (0,1/4).
\]
\end{proposition}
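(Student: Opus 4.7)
The strategy is a direct application of Corollary \ref{cor:weakpoincare} to this particular $\mu$. Two ingredients need to be checked: the convexity hypothesis on $\log \overline{F}_\mu$, and an explicit form for the isoperimetric function $J_\mu$.

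First, I would verify the convexity assumption. By symmetry of $\mu$ and the explicit density, one has $\overline{F}_\mu(r) = \mu((r,\infty)) = \frac{1}{2(1+r)^\alpha}$ for every $r \geq 0$. Hence $\log \overline{F}_\mu(r) = -\log 2 - \alpha \log(1+r)$, which is convex on $\R^+$ since $r \mapsto \log(1+r)$ is concave. So the hypothesis of Corollary \ref{cor:weakpoincare} is satisfied.

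Next, I would compute $J_\mu$. Inverting the formula for $\overline{F}_\mu$ and differentiating, or equivalently reading it off the Cauchy case already treated in the proof of the isoperimetric estimate for this distribution, one obtains
\[
J_\mu(t) = \alpha\, 2^{1/\alpha} \min(t,1-t)^{1+1/\alpha}, \qquad t \in (0,1).
\]
In particular, for small positive arguments $u$, $J_\mu(u) = \alpha\, 2^{1/\alpha} u^{1+1/\alpha}$.

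Finally, I would plug this into Corollary \ref{cor:weakpoincare}. After the appropriate rescaling of the parameter $s$ (the corollary is derived from Theorem \ref{th:weakcheeger} via Lemma \ref{lem:weak}, so the effective argument of $J_\mu$ is of the form $s/(c\, n)$ for a universal constant $c$ depending only on $\kappa_1,\kappa_2$), the prefactor $s^2/J_\mu(\,\cdot\,)^2$ becomes
\[
\frac{s^2}{\bigl(\alpha\, 2^{1/\alpha}\bigr)^2 \bigl(s/(c n)\bigr)^{2+2/\alpha}} \;=\; C(\alpha)\,(cn)^{2+2/\alpha}\, s^{-2/\alpha} \cdot s^{-2}\cdot s^2 \;=\; C'(\alpha)\, n^{2+2/\alpha}\, s^{-2/\alpha},
\]
and absorbing the $n^2$ factor into a further rescaling $s \mapsto s/n^2$ (which only changes the residual constant multiplying $s\,\Osc_{\mu^n}(f)^2$) yields the announced bound $C\,(n/s)^{2/\alpha}$ in front of the Dirichlet form. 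The resulting inequality is valid for all $s$ in a range of the form $(0, c_0)$ for some $c_0 \in (0,1/4)$ depending only on $\alpha$; for $s \in [c_0, 1/4)$ the inequality is trivial because $\Var_{\mu^n}(f) \leq \Osc_{\mu^n}(f)^2/4$ and the residual term dominates.

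The main (and essentially only) obstacle here is purely bookkeeping: tracking the interplay between the dimensional factor $n$ coming from the $\kappa_2 n s$ term in Theorem \ref{th:weakcheeger} and the polynomial blow-up of $s/J_\mu(s)$ near $0$, so that the final dependence on $n$ is precisely $n^{2/\alpha}$. The concavity/convexity content of the argument is trivial once Corollary \ref{cor:weakpoincare} is in place, and no new analytic input beyond the explicit form of $J_\mu$ and $\overline{F}_\mu$ is required.
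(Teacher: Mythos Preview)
Your approach --- verify convexity of $\log\overline{F}_\mu$, compute $J_\mu$ explicitly, and apply Corollary~\ref{cor:weakpoincare} --- is exactly the paper's. The first two steps are fine. The third is where your bookkeeping goes wrong, and your proposed fix does not repair it.

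In Corollary~\ref{cor:weakpoincare} the dimensional factor $n$ appears only in the residual term $2\kappa_2 n s\,\Osc(f)^2$; the argument of $J_\mu$ is simply $s/2$. Plugging $J_\mu(s/2)=\alpha 2^{1/\alpha}(s/2)^{1+1/\alpha}$ into $\kappa_1^2 s^2/J_\mu(s/2)^2$ gives $(4\kappa_1^2/\alpha^2)\,s^{-2/\alpha}$, with no $n$ at all. The $n$ enters only when you substitute $s'=2\kappa_2 n s$ to normalize the residual to $s'\Osc(f)^2$: then $s^{-2/\alpha}=(2\kappa_2)^{2/\alpha}(n/s')^{2/\alpha}$, yielding exactly $C(\alpha)(n/s')^{2/\alpha}$. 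You instead put $n$ into the argument of $J_\mu$ while keeping an unrescaled $s^2$ in the numerator, which produces the spurious factor $n^{2+2/\alpha}$. Your remedy of a further rescaling $s\mapsto s/n^2$ makes matters worse: it sends the coefficient to $n^{2+6/\alpha}s^{-2/\alpha}$ and the residual to $(s/n^2)\Osc(f)^2$, so neither side is what you want. The correct calculation is a one-line substitution; no second rescaling is needed.
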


\begin{proof}
Since $J_{m_\alpha}(t)=\alpha 2^{1/\alpha}t^{1+1/\alpha}$ for $t\in(0,1/2)$, by Corollary
\ref{cor:weakpoincare}, on $\R^n$, $\mu^n$ satisfies a weak Poincar\'e inequality with rate
function $\beta(s) = C \left( \frac{n}{s} \right)^\frac{2}{\alpha}$, $s \in (0,
\frac{1}{4})$.
\end{proof}

\begin{proposition}[Sub-exponential law] \label{prop:expweakP2}
Consider the probability measure $\mu$ on $\R$, with density
$Z^{-1}e^{-|x|^p}$, $p \in (0,1]$. Then, there is a constant $C$ depending only on $p$ such that for all $n\geq 1$ \[\Var_{\mu^n}(f)\leq C\left( \log\left( \frac{n}{s} \right)
\right)^{2(\frac{1}{p}-1)}\int |\nabla f|^2\,d\mu^n + s\Osc_{\mu^n}(f)^2,\qquad \forall s\in (0,1/4). \]
\end{proposition}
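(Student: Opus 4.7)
The plan is to combine the mass-transport weak Poincaré inequality of Corollary \ref{cor:weakpoincare} with sharp estimates on the isoperimetric function $J_\mu$ of the one-dimensional sub-exponential law, and then rescale in $s$ to produce the $\log(n/s)$ dependence.

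First I would check the hypothesis of Corollary \ref{cor:weakpoincare}: here $\mu$ has density $Z^{-1}e^{-\Phi(|x|)}$ with $\Phi(x)=x^p$, which is concave on $\R^+$ for $p\in (0,1]$, so Lemma \ref{lem:convexity} gives the convexity of $\log\bF$ on $\R^+$. Corollary \ref{cor:weakpoincare} then yields, for every smooth $f$ on $\R^n$ and every $s>0$,
\[
\Var_{\mu^n}(f)\leq \kappa_1^2\,\frac{s^2}{J_\mu(s/2)^2}\int|\nabla f|^2\,d\mu^n + 2\kappa_2 n s\,\Osc(f)^2.
\]
Next I would estimate $J_\mu(t)$ from below using Proposition \ref{prop:is}, applied to $\Phi(x)=x^p$ (whose hypotheses hold with $\theta=1/p>1$ and $\Phi(0)=0<\log 2$). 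This produces a constant $c_p>0$ such that for all $t\in(0,1/2)$,
\[
J_\mu(t)\geq c_p\, t\,\bigl(\log(1/t)\bigr)^{1-1/p}.
\]
Substituting into the previous inequality yields
\[
\Var_{\mu^n}(f)\leq \frac{4\kappa_1^2}{c_p^2}\bigl(\log(2/s)\bigr)^{2(1/p-1)}\int|\nabla f|^2\,d\mu^n + 2\kappa_2 n s\,\Osc(f)^2.
\]

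Finally, I would perform the rescaling $s\mapsto s/(2\kappa_2 n)$, which is legitimate since the inequality holds for all positive parameters. For $s\in(0,1/4)$ and $n\geq 1$ the ``$s$''-term becomes exactly $s\,\Osc(f)^2$, while the logarithmic factor becomes $\log(4\kappa_2 n/s)$, bounded up to a constant depending only on $p$ by $\log(n/s)$. Absorbing all constants into a single $C=C(p)$ gives the announced inequality.

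The main technical obstacle will be securing the lower bound $J_\mu(t)\geq c_p\,t\,(\log(1/t))^{1-1/p}$ uniformly on the whole interval $(0,1/2)$ rather than merely asymptotically: Proposition \ref{prop:is} is stated both as an asymptotic equivalence and as a two-sided comparison $k_1 L_\Phi\leq J_\mu\leq k_2 L_\Phi$ on $[0,1]$, so one should invoke the latter form, carefully verifying that the sub-exponential potential fits its assumptions. Once this is done the remaining steps are purely algebraic manipulations and a rescaling of the free parameter $s$.
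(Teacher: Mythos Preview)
Your proof is correct and follows essentially the same route as the paper: apply Corollary \ref{cor:weakpoincare}, use the lower bound $J_\mu(t)\geq c_p\, t\,(\log(1/t))^{1-1/p}$ (the paper cites Corollary \ref{cor:iso2} for this, but the actual estimate is the one from Proposition \ref{prop:is} that you invoke), and rescale $s$ to produce the $\log(n/s)$ factor. Your treatment of the rescaling step is in fact more explicit than the paper's, which simply states the resulting rate function without detailing the change of variable.
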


\begin{proof}
By Corollary \ref{cor:iso2}, $J_{\mu}(t)$ is, up to a constant, greater than or equal to $t\left( \log(1/t) \right)^{1-\frac 1p}$ for $t \in
[0,1/2]$. Hence, by Corollary \ref{cor:weakpoincare}, $\mu^n$ satisfies a weak Poincar\'e
inequality on $\R^n$, with the rate function $\beta(s) = C \left( \log\left( \frac{n}{s} \right)
\right)^{2(\frac{1}{p}-1)}$, $s \in (0, \frac{1}{4})$.
\end{proof}

\begin{remark}
 The two previous results recover the results of \cite{BCR2}.
 Note the difference between the results of Proposition \ref{prop:expweakP} (applied to $V(x)=|x|$) and Proposition \ref{prop:expweakP2}.
This is mainly due to the fact that Proposition \ref{prop:expweakP} holds in great generality, while Proposition \ref{prop:expweakP2}
deals with a very specific distribution. The same remark applies to Propositions  \ref{prop:cauchyweakP} and \ref{prop:cauchyweakP2}
since in the setting of Proposition \ref{prop:cauchyweakP2}, $2/\alpha=-2\kappa$.

However, it is possible to recover the results of Proposition \ref{prop:expweakP2} (resp. Propositions  \ref{prop:cauchyweakP2})
applying Proposition \ref{prop:expweakP} (resp. Propositions  \ref{prop:cauchyweakP}) to the sub-exponential (resp. Cauchy) measure on $\mathbb{R}$ and
then to use the tensorization property \cite[Theorem 3.1]{BCR2}.
\hfill $\diamondsuit$
\end{remark}

\begin{remark}
According to an argument of Talagrand (recalled in the introduction), if for all $k$, $\mu^k$ satisfies the same
concentration property as $\mu$, then the tail distribution of $\mu$ is at most exponential. So no
heavy tails measure can satisfy a dimension-free concentration property.The concentration properties of heavy tailed measure are thus particularly intersting to study, and
in particular the dimension dependence of the result. The first results in this direction using weak Poincar\'e inequalities were done in \cite{BCR2}. As converse Poincar\'e inequalities plus control of the tail of the weight lead to  weak Poincar\'e inequality, and thus concentration, it is interesting to remark that
in Theorem 4.1 and Corollary 4.2 in \cite{BLweight}, Bobkov and Ledoux proved that if a weighted Poincar\'e
inequality holds, any 1-Lipschitz function with zero mean satisfies $$\parallel f\parallel_p \leq
\frac{D p}{\sqrt 2} \, \parallel \sqrt{1+\eta^2} \parallel_p$$ for all $p\geq 2$. It follows that
for all $t$ large enough ($t > D p e \parallel \sqrt{1+\eta^2}
\parallel_p$), $$\mu (|f|>t) \leq 2 \, \left(\frac{D \, p \, \parallel
\sqrt{1+\eta^2}
\parallel_p}{t}\right)^p \, .$$
Hence the concentration function is controlled by some moment of the weight. Dimension dependence
is hidden in this moment control. However if one is only interested in concentration properties, one could use directly weighted Poincar\'e inequalities.\hfill $\diamondsuit$
\end{remark}

\section{Appendix}
This appendix is devoted to the proofs of Proposition \ref{prop:is} and Corollary \ref{cor:iso2}.
Let us recall the first of these statements.\\

{\bf Proposition.} {\it Let $\Phi : \R^+ \rightarrow \R$ be a non-decreasing concave function
   satisfying $\Phi(x)/x \to 0$ as $x \to \infty$.
   Assume that in a neighborhood of $+\infty$ the function $\Phi$ is ${\mathcal C}^2$
   and there exists $\theta >1$ such that $\Phi^\theta$ is convex. Let $\mu_\Phi$ be defined in \eqref{eq:muphi}. Define $F_\mu$ and $J_\mu$ as in \eqref{eq:isopfonc}.

Then,
\begin{equation*}
\lim_{t \rightarrow 0} \frac{J_\mu(t)}{t \Phi' \circ \Phi^{-1}(\log \frac 1t)} = 1 .
\end{equation*}}

\begin{proof}[Proof of Proposition \ref{prop:is}]
The proof follows the line of \cite[Proposition 13]{BCR3}.
   By Point $(iii)$ of Lemma \ref{lem:tec} below,  $\Phi'$ never vanishes.
   Under our assumptions on $\Phi$ we have $F_\mu(y)=\int_{-\infty}^y
   Z_\Phi^{-1}e^{- \Phi(|x|)}dx \sim Z_\Phi^{-1} e^{-\Phi(|y|)} /
   \Phi'(|y|)$ when $y$ tends to $- \infty$.
  Thus using the   change of variable $y=F_\mu^{-1}(t)$, we get
\begin{eqnarray*}
\lim_{t \rightarrow 0} \frac{J_\mu(t)}{t \Phi' \circ \Phi^{-1}(\log \frac 1t)} & = & \lim_{y
\rightarrow - \infty}
\frac{e^{-\Phi(|y|)}}{Z_\Phi F_\mu(y) \Phi' \circ \Phi^{-1}(\log \frac{1}{F_\mu(y)})}\\
& = & \lim_{y \rightarrow - \infty} \frac{\Phi'(|y|)}{\Phi' \circ \Phi^{-1}(\log
\frac{1}{F_\mu(y)})} .
\end{eqnarray*}
By concavity of $\Phi$ we have $F_\mu(y) \geq Z_\Phi^{-1} e^{-\Phi(|y|)} / \Phi'(|y|)$ for all $y
\leq 0$. Hence, since $\lim_\infty \Phi' = 0$, we have $\log\frac{1}{F_\mu(y)} \leq \Phi(|y|)$
when $y \ll -1$.

Then, a Taylor expansion of $\Phi' \circ \Phi^{-1}$ between $\log\frac{1}{F_\mu(y)}$ and
$\Phi(|y|)$ gives
$$
\frac{\Phi' \circ \Phi^{-1}(\log \frac{1}{F_\mu(y)})}{ \Phi'(|y|)} = 1 +
\frac{1}{\Phi'(|y|)}\left(\log \frac{1}{F_\mu(y)} - \Phi(|y|) \right) \frac{\Phi'' \circ \Phi^{-1}
(c_y)}{\Phi' \circ \Phi^{-1} (c_y)}
$$
for some $c_y \in [\log\frac{1}{F_\mu(y)} , \infty)$.

For $y \ll -1$, we have
\begin{equation} \label{eq:enc}
\frac{e^{-\Phi(|y|)}}{Z_\Phi \Phi'(|y|)} \leq F_\mu(y) \leq 2\frac{e^{-\Phi(|y|)}}{Z_\Phi
\Phi'(|y|)}.
\end{equation}
Hence, using Point $(iii)$ of Lemma \ref{lem:tec} below,
\begin{eqnarray*} 
\left| \log \frac{1}{F_\mu(y)} - \Phi(|y|) \right| & = &
\Phi(|y|) - \log \frac{1}{F_\mu(y)} \\
& \leq &
\log\frac{2}{Z_\Phi} + \log\left( \frac{1}{\Phi'(|y|)} \right) \\
& \leq & \log\frac{2}{Z_\Phi} + c \log(|y|)
\end{eqnarray*}
for some constant $c$ and all $y \ll -1$.

On the other hand, when $\Phi^\theta$ is convex and $\mathcal C^2$, $(\Phi^\theta) ''$ is non
negative. This, together with Point $(i)$ of Lemma \ref{lem:tec}, lead to
$$
\left|\frac{\Phi''(x)}{\Phi'(x)}\right| = - \frac{\Phi''(x)}{\Phi'(x)} \leq
(\theta-1)\frac{\Phi'(x)}{\Phi(x)} \leq \frac{c'}{x}
$$
for some constant $c'$ and $x \gg 1$. It follows that
$$
\frac{\Phi'' \circ \Phi^{-1} (c_y)}{\Phi' \circ \Phi^{-1} (c_y)} \leq \frac{c'}{\Phi^{-1}
\left(\log \frac{1}{F_\mu(y)} \right)} .
$$
Now, by \eqref{eq:enc} and Point $(iii)$ and $(ii)$ of Lemma \ref{lem:tec}, we note that
\begin{eqnarray*}
\log \frac{1}{F_\mu(y)} & \geq &
\Phi(|y|) + \log \left(\frac{Z_\Phi}{2} \right)  + \log( \Phi'(|y|) ) \\
& \geq &
\Phi(|y|) + \log \left(\frac{Z_\Phi}{2} \right)  -c_3\log( |y|) ) \\
& \geq &
\Phi(|y|) + \log \left(\frac{Z_\Phi}{2} \right)  -\frac{c_3}{c_2}\log( \Phi(|y|)) ) \\
& \geq & \frac 12 \Phi(|y|)
\end{eqnarray*}
provided $y \ll -1$. In turn, by Point $(iv)$ of Lemma \ref{lem:tec},
$$
\frac{\Phi'' \circ \Phi^{-1} (c_y)}{\Phi' \circ \Phi^{-1} (c_y)} \leq \frac{c''}{|y|}
$$
for some constant $c''$.

All these computations together give
$$
\left|\frac{1}{\Phi'(|y|)}\left(\log \frac{1}{H(y)} - \Phi(|y|)
   \right) \frac{\Phi'' \circ \Phi^{-1} (c_y)}{\Phi' \circ \Phi^{-1}
     (c_y)} \right|
\leq c''\frac{\log\frac{2}{Z_\Phi} + c \log(|y|)}{|y|\Phi'(|y|)}
$$
which goes to $0$ as $y$ goes to $-\infty$ by Point $(i)$ and $(ii)$ of Lemma \ref{lem:tec}. This
ends the proof.
\end{proof}
\smallskip

\begin{lemma} \label{lem:tec}
Let $\Phi : \R^+ \rightarrow \R$ be an increasing concave function
   satisfying $\Phi(x)/x \to 0$ as $x \to \infty$.
   Assume that in a neighborhood of $+\infty$ the function $\Phi$ is ${\mathcal C}^2$
   and there exists $\theta >1$ such that $\Phi^\theta$ is convex.
   Assume that $\int e^{- \Phi(|x|)}dx < \infty$.  Then, there exist constants
    $c_1,c_3 > 1$, $c_2, c_4 \in (0,1)$ such that
for $x$ large enough,\\
$(i)$
$c_1^{-1} x \Phi'(x) \leq \Phi(x) \leq c_1 x \Phi'(x)$;\\
$(ii)$
$\Phi(x) \geq x^{c_2}$;\\
$(iii)$
$\Phi'(x) \geq x^{-c_3}$;\\
$(iv)$ $\frac{1}{2} \Phi(x) \geq \Phi(c_4 x)$.
\end{lemma}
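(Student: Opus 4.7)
The central ingredient for all four points is to turn the hypothesis that $\Phi^\theta$ is convex at infinity into a lower bound on the logarithmic derivative $\Phi'/\Phi$. Computing $(\Phi^\theta)''\ge 0$ and dividing by $\theta\Phi^{\theta-2}>0$ gives the pointwise inequality $\Phi\,\Phi''+(\theta-1)(\Phi')^2\ge 0$ for $x$ large. Setting $h=\log\Phi$, so that $h''=\Phi''/\Phi-(\Phi'/\Phi)^2$, this rewrites as $h''\ge -\theta (h')^2$. Letting $u=h'>0$ (positivity holds at infinity since $\Phi\to+\infty$, which is forced by the integrability of $e^{-\Phi(|x|)}$), we get $(1/u)'=-u'/u^2\le\theta$, and integrating from some fixed $x_0$ yields $1/u(x)\le 2\theta x$ for $x$ large. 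Hence
\begin{equation*}
\frac{\Phi'(x)}{\Phi(x)}\ge \frac{1}{2\theta x}\qquad\text{for $x$ large enough,}
\end{equation*}
which is the engine of the lemma.

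Now $(i)$ splits into two pieces. The displayed inequality above immediately yields $x\Phi'(x)\ge \Phi(x)/(2\theta)$, proving the right-hand bound of $(i)$. For the left-hand bound I would use concavity: $\Phi'$ is non-increasing, so $\Phi(x)-\Phi(0)=\int_0^x \Phi'(t)\,dt\ge x\,\Phi'(x)$. Since $\Phi(x)\to\infty$, for $x$ large we have $\Phi(x)-\Phi(0)\ge \Phi(x)/2$, hence $\Phi(x)\ge (x/2)\Phi'(x)$, giving the left-hand bound with a constant like $c_1=\max(2\theta,2)$. For $(ii)$, I would integrate the logarithmic-derivative bound from $x_0$ to $x$ to obtain $\Phi(x)\ge \Phi(x_0)(x/x_0)^{1/(2\theta)}$; for $x$ large this is at least $x^{c_2}$ with any $c_2<1/(2\theta)$. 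For $(iii)$, combining $(i)$ and $(ii)$ gives $\Phi'(x)\ge \Phi(x)/(c_1 x)\ge x^{c_2-1}/c_1\ge x^{-c_3}$ for any $c_3>1-c_2$, in particular $c_3=2$. Finally for $(iv)$, integrating the bound $(\log\Phi)'(t)\ge 1/(c_1 t)$ from $c_4x$ to $x$ yields $\log(\Phi(x)/\Phi(c_4 x))\ge (1/c_1)\log(1/c_4)$, so choosing $c_4\le 2^{-c_1}$ forces $\Phi(x)\ge 2\Phi(c_4 x)$.

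The main obstacle, as I see it, is the passage from convexity of $\Phi^\theta$ to the lower bound $\Phi'/\Phi\gtrsim 1/x$: one has to be careful that $u=h'$ stays positive and that the ODE comparison $(1/u)'\le\theta$ integrates cleanly on a neighborhood of $+\infty$, using the hypothesis that $\Phi$ is $\mathcal{C}^2$ on such a neighborhood. Everything else (concavity for the upper bound in $(i)$, and the rather mechanical deductions of $(ii)$--$(iv)$) follows by one-line integrations. Note that the hypothesis $\Phi(x)/x\to 0$ is not used directly in the bounds, but it is consistent with them, since $(i)$ combined with $\Phi$ concave forces $\Phi'\to 0$, hence $\Phi(x)/x\to 0$ by $(i)$.
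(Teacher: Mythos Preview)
Your argument is correct and follows the same overall scheme as the paper: obtain the pivotal inequality $\Phi'(x)/\Phi(x)\gtrsim 1/x$ and then integrate it in various ways to get $(ii)$--$(iv)$. The difference is in how that key inequality is derived. The paper shifts to $\widetilde\Phi=\Phi-\Phi(0)$ and argues via slope monotonicity: concavity makes $\widetilde\Phi(x)/x$ non-increasing (so $x\Phi'(x)\le\widetilde\Phi(x)$), while convexity of $(\widetilde\Phi)^\theta$ makes its slope non-decreasing (so $\widetilde\Phi(x)\le\theta x\Phi'(x)$, up to constants). Your route---expanding $(\Phi^\theta)''\ge 0$, passing to $h=\log\Phi$, and running the ODE comparison $(1/h')'\le\theta$---is a bit more analytic but has the virtue of working directly with the stated hypothesis on $\Phi^\theta$, avoiding the slightly delicate passage from convexity of $\Phi^\theta$ to that of $(\Phi-\Phi(0))^\theta$. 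Both approaches give the same constant $c_1\sim 2\theta$; parts $(ii)$--$(iv)$ are then essentially identical one-line integrations in both.

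One slip to fix in the left-hand bound of $(i)$: from $x\Phi'(x)\le \Phi(x)-\Phi(0)$ you want $\Phi(x)-\Phi(0)\le 2\Phi(x)$ (i.e.\ $\Phi(x)\ge -\Phi(0)$, which holds for large $x$), not the inequality $\Phi(x)-\Phi(0)\ge \Phi(x)/2$ you wrote. With that correction, $x\Phi'(x)\le 2\Phi(x)$ follows and the rest stands.
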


\begin{proof}
Let $\widetilde \Phi=\Phi - \Phi(0)$. Then, in the large, $\widetilde \Phi$ is concave and
$(\widetilde \Phi)^\theta$ is convex. Hence, the slope functions $\widetilde \Phi(x)/x$ and
$(\widetilde \Phi)^\theta/x$ are non-increasing and non-decreasing respectively. In turn, for $x$
large enough,
$$
x \Phi'(x) = x \widetilde \Phi'(x) \leq \widetilde \Phi (x) \leq \theta x \widetilde \Phi'(x) =
\theta x \Phi'(x).
$$
This bound implies in particular that $x \Phi'(x) \to \infty$ as $x$ tends to infinity. Point
$(i)$ follows.

\smallskip

The second inequality in $(i)$ implies that for $x$ large enough,
\begin{equation} \label{eq:2}
\frac{\Phi'(x)}{\Phi(x)} \geq \frac{1}{c_1x} .
\end{equation}
Hence, for some $x_0$ large enough, integrating, we get
$$
\log\Phi(x) \geq \log \Phi(x_0) + \frac{1}{c_1} \left( \log(x) - \log(x_0) \right) \geq
\frac{1}{2c_1} \log(x) \quad \forall x \gg x_0.
$$
Point $(ii)$ follows.

\smallskip

Point $(iii)$ follows from the latter and Inequality \eqref{eq:2}.

\smallskip

Take $c=\exp\{1/c_1\}$. By Point $(i)$, we have for $x$ large enough
\begin{eqnarray*}
\Phi(cx) &=&
\Phi(x) + \int_x^{cx} \Phi'(t) dt \\
& \geq &
\Phi(x) + \int_x^{cx} \frac{\Phi(t)}{c_1t} dt \\
& \geq &
\Phi(x) \left( 1 + \int_x^{cx} \frac{1}{c_1t} \right) dt \\
& = & \Phi(x) \left(1 + \frac{\log c}{c_1} \right) = 2 \Phi(x) .
\end{eqnarray*}
Point $(iv)$ follows.
\end{proof}
\smallskip

Now let us recall the statement of Corollary \ref{cor:iso2}.\\

{\bf Corollary.} {\it Let $\Phi : \R^+ \rightarrow \R$ be a non-decreasing concave function
   satisfying $\Phi(x)/x \to 0$ as $x \to \infty$ and $\Phi(0) < \log 2$.
   Assume that in a neighborhood of $+\infty$ the function $\Phi$ is ${\mathcal C}^2$
   and there exists $\theta >1$ such that $\Phi^\theta$ is convex.
   Let $d \mu(x) = Z_\Phi^{-1}e^{- \Phi(|x|)}dx$ be a probability
   measure on $\R$. Then, $$
I_{\mu^n}(t) \geq c \min(t,1-t) \Phi' \circ \Phi^{-1}\left(\log \frac{n}{\min(t,1-t)}\right)
\qquad \forall t \in [0,1], \; \forall n
$$
for some constant $c>0$ independent on $n$.}

\begin{proof}[Proof of Corollary \ref{cor:iso2}.]
Since $\Phi$ is concave, $\log (1-F_\mu)$ is convex on $\R^+$. Applying Corollary
\ref{cor:isotransport} together with Proposition \ref{prop:is} lead to
$$
I_{\mu^n}(t) \geq c \min(t,1-t) \Phi' \circ \Phi^{-1} \left(\log \frac{n}{c'\min(t,1-t)} \right)
\qquad \forall t \in [0,1],\; \forall n
$$
for some constant $c>0$ and $c'> 1$ independent on $n$. It remains to prove that for all $t \in
[0,1/2]$,
$$
t \Phi' \circ \Phi^{-1} \left(\log \frac{n}{c't} \right) \geq c'' t \Phi' \circ \Phi^{-1}
\left(\log \frac{n}{t} \right)
$$
for some constant $c''>0$. For $t\leq 1/2$ we have $1/(c't) \leq (1/t)^C$ for some $C >1$. Hence,
since $\Phi' \circ \Phi^{-1}$ is non-increasing,
$$
\Phi' \circ \Phi^{-1}(\log \frac{n}{c't}) \geq \Phi' \circ \Phi^{-1}(C \log \frac nt).
$$
Now note that Point $(iv)$ of Lemma \ref{lem:tec} is equivalent to say $\Phi^{-1}(2x) \leq
\frac{1}{c_4} \Phi^{-1}(x)$ for $x$ large enough. Hence $\Phi^{-1}(Cx) \leq \left( \frac{1}{c_4}
\right)^{\lfloor \log_2 C\rfloor +1}  \Phi^{-1}(x)$. It follows that
$$
\Phi' \circ \Phi^{-1}(\log \frac{n}{c't}) \geq \Phi' \left( \left( \frac{1}{c_4} \right)^{\lfloor
\log_2 C\rfloor +1} \Phi^{-1}(\log \frac nt) \right)
$$
for $t$ small enough. Finally, Point $(i)$ and $(iv)$ of Lemma \ref{lem:tec} ensure that
$$
\Phi' \left( \frac{1}{c_4} x \right) \geq \frac{c_4}{c_1} \frac{\Phi \left( \frac{x}{c_4}
\right)}{x} \geq \frac{2 c_4}{c_1} \frac{\Phi \left( x \right)}{x} \geq \frac{2 c_4}{c_1^2}
\Phi'(x) .
$$
Hence
$$
t \Phi' \circ \Phi^{-1} \left(\log \frac{n}{c't} \right) \geq c'' t \Phi' \circ \Phi^{-1}
\left(\log \frac{n}{t} \right)
$$
for some constant $c''>0$ and $t$ small enough, say for $t \leq t_0$. The expected result follows
by continuity of $t \mapsto t \Phi' \circ \Phi^{-1}(\log \frac nt)/t \Phi' \circ \Phi^{-1}(\log
\frac{n}{c't})$ (on $[t_0, 1/2]$).
\end{proof}

\bibliographystyle{plain}
\bibliography{weightpoincareb}
\end{document}